\newtheorem{proposition}{Proposition}[section]
\newtheorem{theorem}[proposition]{Theorem}
\newtheorem{lemma}[proposition]{Lemma}
\newtheorem{corollary}[proposition]{Corollary}
\newtheorem{remark}[proposition]{Remark}
\newenvironment{proof}{\smallskip\noindent\emph{Proof.}\hspace{1pt}}%
{\hspace{-5pt}{\nobreak\quad\nobreak\hfill\nobreak$\square$\vspace{8pt}%
\par}\smallskip\goodbreak}
\newenvironment{proofof}[1]{\smallskip\noindent\emph{Proof of #1.}%
\hspace{1pt}}{\hspace{-5pt}{\nobreak\quad\nobreak\hfill\nobreak%
$\square$\vspace{8pt}\par}\smallskip\goodbreak}
\newcommand{\Section}[1]{\section{#1}\setcounter{equation}{0}}
\newcommand{\C}[1]{\mathbf{C^{#1}}}
\newcommand{\modulo}[1]{{\left|#1\right|}}
\newcommand{\norma}[1]{{\left\|#1\right\|}}
\newcommand{\Ref}[1]{{\rm(\ref{#1})}}
\newcommand{\reali}{{\mathbb{R}}}
\newcommand{\naturali}{{\mathbb{N}}}
\newcommand{\tv}{\mathrm{TV}}
\newcommand{\diam}{\mathop{\mathrm{diam}}}
\renewcommand{\epsilon}{\varepsilon}
\renewcommand{\phi}{\varphi}
\renewcommand{\theta}{\vartheta}
\renewcommand{\L}[1]{\mathbf{L^#1}}
\newcommand{\gf}{{\mathinner{\mathbf{\Phi}}}}
\newcommand{\caratt}[1]{{\chi_{\strut#1}}}
\title{On the Stability Functional for Conservation Laws}
\author{Rinaldo M.~Colombo \\ \small Dipartimento di Matematica \\
  \small Universit\`a degli Studi di Brescia \\ \small Via Branze, 38
  \\ \small 25123 Brescia, Italy \\\small
  \texttt{Rinaldo.Colombo@UniBs.it} \\ \and Graziano Guerra \\ \small
  Dip.~di Matematica e Applicazioni \\ \small Universit\`a di Milano
  -- Bicocca \\ \small Via Bicocca degli Arcimboldi, 8 \\
  \small 20126 Milano, Italy \\ \small
  \texttt{Graziano.Guerra@UniMiB.it}}
\begin{document}

\maketitle

\begin{abstract}

  \noindent This note is devoted to the explicit construction of a
  functional defined on all pairs of $\L1$ functions with small total
  variation, which is equivalent to the $\L1$ distance and non
  increasing along the trajectories of a given system of conservation
  laws. Two different constructions are provided, yielding an
  extension of the original stability functional by Bressan, Liu and
  Yang.

  \medskip

  \noindent\textit{2000~Mathematics Subject Classification:} 35L65.

  \medskip

  \noindent\textit{Key words and phrases:} Hyperbolic Systems of
  Conservation Laws

\end{abstract}

\Section{Introduction}
\label{sec:Intro}

Let the smooth map $f \colon \Omega \mapsto \reali^n$ define the
strictly hyperbolic system of conservation laws
\begin{equation}
  \label{eq:HCL}
  \partial_t u + \partial_x f(u) = 0 
\end{equation}
where $t > 0$, $x \in \reali$ and $u \in \Omega$, with $\Omega
\subseteq \reali^n$ being an open set.

Most functional theoretic methods fail to tackle these equations,
essentially due to the appearance of shock waves. Since 1965, the
Glimm functional~\cite{Glimm} has been a major tool in any existence
proof for~\Ref{eq:HCL} and related equations. More recently, an
analogous role in the proofs of continuous dependence has been played
by the stability functional $\Phi$ introduced in~\cite{BressanYangLiu,
  LiuYang1, LiuYang3}, see also~\cite{BressanLectureNotes}. The
functional $\Phi$ has been widely used to prove the $\L1$--Lipschitz
dependence of solutions to~\Ref{eq:HCL} (and related problems) from
initial data having small total variation, see for
example~\cite{AmadoriGosseGuerra, AmadoriGuerra2002, ColomboGuerra1,
  ColomboGuerra2, Ha2, Ha1}.  Special cases comprising data with large
total variation are considered in~\cite{ColomboCorli3, Guerra2004,
  Lewicka2, Lewicka1, LewickaTrivisa}.  Nevertheless, the use of
$\Phi$ is hindered by the necessity of introducing specific
approximate solutions, namely the ones based either on Glimm
scheme~\cite{Glimm} or on the wave front tracking
algorithm~\cite{BressanLectureNotes, DafermosWFT}. The present paper
makes the use of the stability functional $\Phi$ \emph{independent}
from any kind of approximate solutions. The present construction
allows to simplify several parts of the cited papers, where the
presentation of the stability functional needs to be preceded by the
introduction of all the machinery related to Glimm's scheme or wave
front tracking approximations, see for instance~\cite{ColomboGuerra3}.

We extend the stability functional to all $\L1$ functions with
sufficiently small total variation. This construction is achieved in
two different ways. First, we use general piecewise constant functions
and a limiting procedure, without resorting to any sort of approximate
solutions. Secondly, we exploit the \emph{wave measures},
see~\cite[Chapter~10]{BressanLectureNotes} and give an equivalent
definition that does not require any limiting procedure. Furthermore,
we prove its lower semicontinuity.

With reference to~\cite{BressanLectureNotes} for the basic definitions
related to~\Ref{eq:HCL}, we state the main result of this paper.

\begin{theorem}
  \label{thm:one}
  Let $f$ generate a Standard Riemann Semigroup $S$ on the domain
  $\mathcal{D}_\delta$ defined at~\Ref{def:2.6}. Then, the functional 
  $\mathbf{\Xi}$ defined
  at~\Ref{eq:Xi} by means of piecewise constant functions coincides
  with the one defined at~\Ref{eq:barPhi} by means of wave measures.
  Moreover, it enjoys the following properties:
  \begin{enumerate}[(i)]
  \item $\mathbf{\Xi}$ is equivalent to the $\L1$ distance, i.e.~there
    exists a $C>0$ such that for all $u,\tilde u \in
    \mathcal{D}_{\delta}$
    \begin{displaymath}
      \frac{1}{C} \cdot \norma{u-\tilde u}_{\L1}
      \leq
      \mathbf{\Xi}(u,\tilde u)
      \leq 
      C \cdot \norma{u-\tilde u}_{\L1} \,.
    \end{displaymath}
  \item $\mathbf{\Xi}$ is non increasing along the semigroup
    trajectories, i.e.~for all $u,\tilde u \in\mathcal{D}_{\delta}$
    and for all $t \geq 0$
    \begin{displaymath}
      \mathbf{\Xi}(S_t u, S_t \tilde u) \leq \mathbf{\Xi} (u,\tilde u) \,.
    \end{displaymath}
  \item $\mathbf{\Xi}$ is lower semicontinuous with respect to the
    $\L1$ norm.
  \end{enumerate}
\end{theorem}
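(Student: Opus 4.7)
The plan is to exploit the well-developed theory of the stability functional on piecewise constant functions (due to Bressan, Liu, and Yang, as cited in the introduction) and transfer it to all of $\mathcal{D}_\delta$ via approximation. Piecewise constant functions with small total variation are dense in $\mathcal{D}_\delta$ in the $\L1$ topology, and the Standard Riemann Semigroup $S$ is $\L1$-Lipschitz in the initial datum, so properties of $\mathbf{\Xi}$ that hold on piecewise constant data and are stable under appropriate limits extend to all of $\mathcal{D}_\delta$.

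To show that the two definitions coincide, I would first restrict to piecewise constant $u, \tilde u$: the associated wave measures reduce to finite sums of Dirac masses supported on the jumps, whose weights are prescribed by solving the Riemann problems at those jumps. The integral in~\Ref{eq:barPhi} therefore collapses to the standard discrete Bressan-Liu-Yang expression, while the limiting procedure in~\Ref{eq:Xi}, applied to a trivially approximating sequence, also reduces to the same expression. For general $u, \tilde u \in \mathcal{D}_\delta$ I would approximate by piecewise constants $u_n \to u$, $\tilde u_n \to \tilde u$ in $\L1$ with uniformly bounded total variation, show that the associated wave measures converge weakly to the wave measures of the limits, and verify that both formulas pass to the common limit.

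For item~(i), the equivalence of $\mathbf{\Xi}$ with the $\L1$ distance is classical on piecewise constant data, with constants depending only on the system and $\delta$; since the inequality is homogeneous in $\mathbf{\Xi}$ and in $\norma{u-\tilde u}_{\L1}$, it survives the passage to the limit. Item~(iii) follows naturally if $\mathbf{\Xi}$ is defined in~\Ref{eq:Xi} as a $\liminf$ over piecewise constant approximations, since liminf functionals are automatically lower semicontinuous. Item~(ii) is then obtained by approximating $u, \tilde u$ by piecewise constant data $u_n, \tilde u_n$ whose $S_t$-trajectories can be realised as wave front tracking approximations; for these the classical inequality $\mathbf{\Xi}(S_t u_n, S_t \tilde u_n) \leq \mathbf{\Xi}(u_n, \tilde u_n)$ holds, and one passes to the limit by combining the $\L1$-Lipschitz continuity of $S_t$ on the right-hand side with the lower semicontinuity of $\mathbf{\Xi}$ on the left-hand side.

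The main obstacle will be the coincidence of the two definitions, and specifically the behaviour of the wave-measure integrand in~\Ref{eq:barPhi} under $\L1$ convergence of BV functions with uniformly bounded total variation. Wave measures encode the decomposition of the variation into characteristic families together with nonlocal weights that record potential interactions between waves of different families, and guaranteeing that these quantities behave well under approximation---so that neither shocks in the limit are missed nor spurious waves appear---is delicate. A successful argument likely requires choosing approximations carefully adapted to the wave structure of the limiting function (for instance, piecewise constant projections whose discontinuities solve the correct Riemann problems to leading order), rather than arbitrary $\L1$ approximations.
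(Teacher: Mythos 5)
Your overall architecture --- prove everything on piecewise constant data, then extend by density combined with lower semicontinuity, using sampling-type approximations adapted to the jump structure --- is exactly the paper's. The genuine gap is in item~\emph{(ii)}. You invoke ``the classical inequality $\mathbf{\Xi}(S_tu_n,S_t\tilde u_n)\le\mathbf{\Xi}(u_n,\tilde u_n)$'' for piecewise constant data, but no such inequality is classical: what Bressan--Liu--Yang provide is the approximate decrease $\Phi^\epsilon(w,\tilde w)(t)\le\Phi^\epsilon(w,\tilde w)(0)+C\epsilon t$ of a \emph{different} functional, evaluated along $\epsilon$-approximate front tracking solutions, in which non-physical waves are excluded from the weights $A_i$ and assigned to a fictitious $(n+1)$-th family in $Q$. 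The functional $\gf$ of~\Ref{eq:Phi} counts \emph{all} jumps, so one must first prove a comparison $\modulo{\gf-\Phi^\epsilon}\le C\,\epsilon\,\norma{w-\tilde w}_{\L1}$ (the paper's Proposition~\ref{lem:diet}). Moreover $S_tu_n$ is in general not piecewise constant (rarefactions appear instantly), so $\mathbf{\Xi}(S_tu_n,S_t\tilde u_n)$ cannot be evaluated as $\gf$ there; one has to run the front tracking approximations $v^\epsilon(t)\to S_tu_n$, apply the comparison together with the classical decay of $\Phi^\epsilon$, and only then let $\epsilon\to0+$ using the lower semicontinuity of $\mathbf{\Xi}$. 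Lower semicontinuity is thus needed twice --- once in $\epsilon$ and once in the final approximation parameter --- and without these intermediate steps the argument for~\emph{(ii)} is circular, since the decrease of the extended functional along exact trajectories is precisely what is being proved.

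A second, smaller point concerns the coincidence of the two definitions: weak convergence of the wave measures $\mu_i^{n}$ is not sufficient to pass to the limit in $\hat{\mathbf{Q}}$, because the set $\left\{(x,y)\colon x<y\right\}$ need not be a continuity set of the limiting product measure --- mass of $\mu_i^{n,+}\otimes\mu_i^{n,-}$ can concentrate on the diagonal. One needs convergence of $\mu_i^{n,\pm}(I)$ on every interval $I$, which the paper extracts from a quantitative comparison between $E_i\left(u(a+),u(b-)\right)$ and $\mu_i\left(\left]a,b\right[\right)$ (Lemma~\ref{lem:diam}) applied to the explicit sampling approximation~\Ref{eq:vnu}, plus a separate control of the near-diagonal contribution. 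You correctly identify this as the delicate point and correctly guess that adapted approximations are required, but the resolution needs strictly more than the weak convergence you propose.
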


\noindent Taking advantage of the machinery presented below, we also
extend the classical Glimm functionals~\cite{BressanLectureNotes,
  Glimm} to general $\L1$ functions with small total variation and
prove their lower semicontinuity, recovering some of the results
in~\cite{BaitiBressan}, but with a shorter proof.

\Section{Notation and Preliminary Results}
\label{sec:Main}

Our reference for the basic definitions related to systems of
conservation laws is~\cite{BressanLectureNotes}. We assume throughout
that $0 \in \Omega$, with $\Omega$ open, and that
\begin{description}
\item[(F)] $f \in \C4 ( \Omega;\reali^n)$, the system~\Ref{eq:HCL} is strictly hyperbolic with
  each characteristic field either genuinely nonlinear or linearly
  degenerate.
\end{description}
\noindent Let $\lambda_1(u), \ldots, \lambda_n(u)$ be the $n$ real
distinct eigenvalues of $D\!f(u)$, indexed so that $\lambda_j (u) <
\lambda_{j+1}(u)$ for all $j$ and $u$. The $j$-th right eigenvector,
normalized as in~\Ref{eq:parametrization}--\Ref{eq:parametrizationBis},
is $r_j(u)$. Let $\sigma \mapsto R_j(\sigma)(u)$, respectively $\sigma
\mapsto S_j(\sigma)(u)$, be the rarefaction curve, respectively the
shock curve, exiting $u$, so that
\begin{equation}
  \label{eq:parametrization}
  \frac{\partial R_j(\sigma)(u)}{\partial \sigma} 
  = 
  r_j\left(R_j(\sigma)(u)\right) \,.
\end{equation}
If the $j$-th field is linearly degenerate, then the parameter
$\sigma$ above is the arc-length. In the genuinely nonlinear case,
see~\cite[Definition~5.2]{BressanLectureNotes}, we choose $\sigma$ so
that
\begin{equation}
  \label{eq:parametrizationBis}
  \frac{\partial \lambda_j}{\partial\sigma}
  \left(R_j (\sigma)(u) \right) 
  =
  k_j
  \quad , \quad
  \frac{\partial \lambda_j}{\partial\sigma}
  \left(S_j (\sigma)(u) \right) 
  =
  k_j 
\end{equation}
where $k_1,\ldots,k_n$ are arbitrary positive fixed numbers.
In~\cite{BressanLectureNotes} the choice $k_j=1$ for all
$j=1,\ldots,n$ was used, while in~\cite{AmadoriGuerra2002} another
choice was made to cope with diagonal dominant sources. The
choice~\Ref{eq:parametrizationBis} preserves the properties underlined
in~\cite[Remark~5.4]{BressanLectureNotes} so that the estimates
in~\cite[Chapter~8]{BressanLectureNotes} still hold. Introduce the
$j$-Lax curve
\begin{displaymath}
  \sigma \mapsto \psi_j (\sigma) (u) =
  \left\{
    \begin{array}{c@{\qquad\mbox{ if }\quad}rcl}
      R_j(\sigma)(u) & \sigma & \geq & 0
      \\
      S_j(\sigma)(u) & \sigma & < & 0
    \end{array}
  \right.
\end{displaymath}
and for $\boldsymbol{\sigma} \equiv (\sigma_1, \ldots, \sigma_n)$,
define the map
\begin{displaymath}
  \mathbf{\Psi}(\boldsymbol{\sigma})(u^-)
  =
  \psi_n(\sigma_n)\circ\ldots\circ\psi_1(\sigma_1)(u^-) \,.
\end{displaymath}
By~\cite[\S~5.3]{BressanLectureNotes}, given any two states $u^-,u^+
\in \Omega$ sufficiently close to $0$, there exists a map $E$ such
that
\begin{equation}
  \label{eq:E}
  (\sigma_1, \ldots, \sigma_n) = E(u^-,u^+)
  \quad \mbox{ if and only if } \quad
  u^+ = \mathbf{\Psi}(\boldsymbol{\sigma})(u^-) \,.
\end{equation}
Similarly, let $\mathbf{q} \equiv §(q_1, \ldots, q_n)$ and define the
map $\mathbf{S}$ by
\begin{equation}
  \label{eq:S}
  \mathbf{S}(\mathbf{q})(u^-) 
  =
  S_n(q_n) \circ \ldots \circ S_1(q_1) (u^-)
\end{equation}
as the gluing of the Rankine--Hugoniot curves. For any two states
$u^-, u^+$ as above, there exists a unique $\mathbf{q}$ such that $u^+
= \mathbf{S}(\mathbf{q})(u^-)$.

Let $u$ be piecewise constant with finitely many jumps and assume that
$\tv(u)$ is sufficiently small. Call $\mathcal{I}(u)$ the finite set
of points where $u$ has a jump.  Let $\sigma_{x,i}$ be the strength of
the $i$-th wave in the solution of the Riemann problem
for~\Ref{eq:HCL} with data $u(x-)$ and $u(x+)$, i.e.~$(\sigma_{x,1},
\ldots, \sigma_{x,n}) = E\left( u(x-), u(x +) \right)$. Obviously if
$x\not\in \mathcal{I}(u)$ then $\sigma_{x,i}=0$, for all
$i = 1,\ldots,n$.  As usual, $\mathcal{A}(u)$ denotes the set of
approaching waves in $u$:
\begin{displaymath}
  \mathcal{A} (u) 
  = \left\{
    \begin{array}{c}
      \left(
        (x,i),(y,j)\right) \in \left( \mathcal{I}(u) \times \{1,\ldots,n\} 
      \right)^2
      \colon 
      \\
      x < y \mbox{ and either } i > j \mbox{ or } i = j, 
      \mbox{ the $i$-th field}
      \\
      \mbox{is genuinely non linear, }
      \min \left\{ \sigma_{x,i}, \sigma_{y,j} \right\} <0\!  
    \end{array}
  \right\} \,.
\end{displaymath}
As in~\cite{Glimm} or~\cite[formula~(7.99)]{BressanLectureNotes},
the linear and the interaction potential are
\begin{displaymath}
  \mathbf{V}(u)
  =
  \sum_{x\in I(u)} \sum_{i=1}^n
  \modulo{\sigma_{x,i}}
  \quad \mbox{ and } \quad
  \mathbf{Q}(u)
  =
  \sum_{\left((x,i),(y,j)\right) \in \mathcal{A}(u)}
  \modulo{\sigma_{x,i}\sigma_{y,j}} \,.
\end{displaymath}
Moreover, let
\begin{equation}
  \label{def:ups}
  \mathbf{\Upsilon} (u) 
  = 
  \mathbf{V} (u) + C_0 \cdot \mathbf{Q} (u)  
\end{equation}
where $C_0>0$ is the constant appearing in the functional of the
wave--front tracking algorithm,
see~\cite[Proposition~7.1]{BressanLectureNotes}. Recall that $C_0$
depends only on the flow $f$ and on the upper bound of the total
variation of initial data.

\begin{remark}
  \label{rem:Lipschitz}
  For fixed $x_1< \ldots < x_{N+1}$, the maps
  \begin{displaymath}
    \begin{array}{rcl}
      (u_1, \ldots, u_N) 
      & \mapsto & 
      \mathbf{V} \left( 
        \sum_{\alpha=1}^N u_\alpha \, 
        \chi_{\left[x_{\alpha}, x_{\alpha+1}\right[}
      \right)
      \\
      (u_1, \ldots, u_N) 
      & \mapsto &
      \mathbf{Q} \left( 
        \sum_{\alpha=1}^N u_\alpha \, 
        \chi_{\left[x_{\alpha}, x_{\alpha+1}\right[}
      \right)
    \end{array}
  \end{displaymath}
  are Lipschitz continuous.  Moreover, the Lipschitz constant of the
  maps
  \begin{displaymath}
    u_{\bar\alpha}
    \mapsto 
    \mathbf{V} \left( 
      \sum_{\alpha=1}^N u_\alpha \, 
      \chi_{\left[x_{\alpha}, x_{\alpha+1}\right[}
    \right)
    \qquad
    u_{\bar\alpha}
    \mapsto 
    \mathbf{Q} \left( 
      \sum_{\alpha=1}^N u_\alpha \, 
      \chi_{\left[x_{\alpha}, x_{\alpha+1}\right[}
    \right)
  \end{displaymath}
  is bounded uniformly in $N$, $\bar\alpha$ and $u_\alpha$ for $\alpha
  \neq \bar \alpha$.
\end{remark}

Finally, for $\delta>0$ sufficiently small, we define
\begin{equation}
  \label{def:2.6}
  \begin{array}{c}
    \displaystyle
    \mathcal{D}_\delta^*
    =
    \left\{
      v \in \L1\left(\reali,\Omega \right) \colon
      v \hbox { is piecewise constant and } \mathbf{\Upsilon}(v) < \delta
    \right\}
    \\
    \displaystyle
    \mathcal{D}_\delta=
    \mathrm{cl} \, \mathcal{D}_\delta^*
  \end{array}
\end{equation}
where the closure is in the strong $\L1$--topology. Unless otherwise
stated, we always consider the right continuous representatives of
maps in $\mathcal{D}_\delta$ and $\mathcal{D}_\delta^*$.

For later use, for $u \in \mathcal{D}_{\delta}$ and $\eta >0$,
introduce the set
\begin{equation}
  \label{eq:Beta}
  B_\eta (u)
  = 
  \left\{
    v \in \L1(\reali;\Omega) \colon
    v \in\mathcal{D}_\delta^*
    \mbox{ and }
    \norma{v-u}_{\L1} < \eta
  \right\} \,.
\end{equation}
Note that, by the definition of $\mathcal{D}_\delta$, $B_\eta (u)$ is
not empty and if $\eta_1 < \eta_2$, then $B_{\eta_1}(u) \subseteq
B_{\eta_2}(u)$. Recall the following fundamental result, proved
in~\cite{BressanCrastaPiccoli}:

\begin{theorem}
  \label{thm:SRS}
  Let $f$ satisfy~\textbf{(F)}. Then, there exists a positive
  $\delta_o$ such that the equation~\Ref{eq:HCL} generates for all
  $\delta \in \left]0, \delta_o \right[$ a Standard Riemann Semigroup
  (SRS) $S\colon \left[0, +\infty \right[ \times \mathcal{D}_\delta
  \mapsto \mathcal{D}_\delta$, which is Lipschitz in $\L1$.
\end{theorem}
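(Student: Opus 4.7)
The plan is to follow the standard construction of a Standard Riemann Semigroup, combining the wave front tracking algorithm with the Bressan--Liu--Yang stability functional to get $\L1$--Lipschitz dependence on initial data; the passage from piecewise constant data in $\mathcal{D}_\delta^*$ to the closure $\mathcal{D}_\delta$ is then by uniform continuity.

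First I would fix $\delta_o$ small enough that all the interaction estimates in \cite[Chapter~7]{BressanLectureNotes} apply, then for any $u_o \in \mathcal{D}_{\delta}^*$ with $\delta<\delta_o$ construct a sequence of $\epsilon$-front tracking approximations $u^{\epsilon}(t,\cdot)$: at $t=0$ start from $u_o$ (possibly after splitting the Riemann problems at the jump points by accurate or simplified Riemann solvers, inserting non-physical fronts to bound the number of fronts), and propagate until two fronts interact, at which time the outgoing Riemann problem is solved again by the approximate solver. The choice of $C_0$ in $\mathbf{\Upsilon}$ guarantees that $t \mapsto \mathbf{\Upsilon}(u^\epsilon(t,\cdot))$ is non-increasing across each interaction, which keeps $u^\epsilon(t,\cdot) \in \mathcal{D}_\delta^*$ for all $t\geq 0$ and yields a uniform total variation bound.

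Next, standard arguments based on the uniform $\tv$ bound and the obvious uniform Lipschitz-in-time estimate in $\L1$ allow one to invoke Helly's theorem and extract, for each $u_o$, an $\L1_{\mathrm{loc}}$ limit $u(t,\cdot)$ as $\epsilon\to 0$; by the standard consistency and entropy verification for wave-front tracking this limit is an entropy weak solution of \Ref{eq:HCL}. The crucial step is to show that this limit does not depend on the chosen subsequence and depends Lipschitz continuously on $u_o$ in the $\L1$ norm. Here I would introduce, on pairs $(u^\epsilon, \tilde u^\epsilon)$ of front tracking approximations, the stability functional $\Phi$ of \cite{BressanYangLiu, LiuYang1, LiuYang3} built from the signed wave measures of $u^\epsilon,\tilde u^\epsilon$ together with a weight depending on $\mathbf{Q}(u^\epsilon)+\mathbf{Q}(\tilde u^\epsilon)$. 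The key analytical work consists in showing that $\Phi$ is equivalent to $\norma{u^\epsilon-\tilde u^\epsilon}_{\L1}$ and almost non-increasing along pairs of approximate trajectories, up to an error that vanishes with $\epsilon$. Passing to the limit then produces the Lipschitz estimate
\begin{displaymath}
  \norma{u(t,\cdot)-\tilde u(t,\cdot)}_{\L1}
  \leq
  L \cdot \norma{u_o-\tilde u_o}_{\L1}
\end{displaymath}
on $\mathcal{D}_\delta^*$, which in particular shows uniqueness of the limit and lets me define $S_t u_o := u(t,\cdot)$ unambiguously on $\mathcal{D}_\delta^*$. The semigroup property $S_{t+s}=S_t\circ S_s$ is then checked on piecewise constant data by standard stability of front tracking under restart, and the Lipschitz estimate is used one more time to extend $S$ by uniform continuity to the closure $\mathcal{D}_\delta=\mathrm{cl}\,\mathcal{D}_\delta^*$ in $\L1$, preserving the Lipschitz constant and the semigroup property.

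The main obstacle is, by far, the Lipschitz estimate on pairs of approximations: a Kru\v{z}kov-type $\L1$ contraction is unavailable for genuine systems, and one really needs the delicate construction and the interaction estimates governing $\Phi$ under simultaneous interactions in $u^\epsilon$ and $\tilde u^\epsilon$, including the contribution of non-physical fronts, which must be controlled uniformly in $\epsilon$ using the smallness of $\mathbf{\Upsilon}$. Once this is achieved, the construction of the SRS and its extension to $\mathcal{D}_\delta$ are essentially routine consequences of the estimates on $\mathcal{D}_\delta^*$.
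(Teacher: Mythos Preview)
Your outline is a faithful sketch of the standard construction in~\cite{BressanCrastaPiccoli} and~\cite[Chapters~7--8]{BressanLectureNotes}, and there is no genuine gap in the strategy. However, note that the paper does \emph{not} prove Theorem~\ref{thm:SRS} at all: it is stated as a known result, with the proof delegated entirely to~\cite{BressanCrastaPiccoli} and the exposition in~\cite{BressanLectureNotes}. So the ``paper's own proof'' here is simply a citation, and your proposal is doing substantially more than what the paper does at this point --- namely, reproducing the content of the cited references. In that sense the comparison is not proposal versus paper, but proposal versus the literature the paper invokes, and on that score your sketch is accurate.
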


We refer to~\cite[Chapters~7 and~8]{BressanLectureNotes} for the proof
of the above result as well as for the definition and further
properties of the SRS.

\Section{The Piecewise Constant Functions Approach}
\label{sec:Glimm}

Extend the Glimm functionals to all $u \in \mathcal{D}_\delta$ as
follows:
\begin{equation}
  \label{eq:Q}
  \bar{\mathbf{Q}} (u) 
  =
  \lim_{\eta\to 0+} \inf_{v \in B_\eta(u)} \mathbf{Q}(v)
  \quad \mbox{ and } \quad
  \bar{\mathbf{\Upsilon}} (u) 
  =
  \lim_{\eta\to 0+} \inf_{v \in B_\eta(u)} \mathbf{\Upsilon}(v) \,.
\end{equation}
The maps $\eta \to \inf_{v \in B_\eta(v)} \mathbf{Q}(v)$ and $\eta \to
\inf_{v \in B_\eta(v)} \mathbf{\Upsilon}(v)$ are non increasing. Thus
the limits above exist and
\begin{displaymath}
  \bar{\mathbf{Q}} (u) 
  =
  \sup_{\eta > 0} \inf_{v \in B_\eta(u)} \mathbf{Q}(v)
  \quad \mbox{ and } \quad
  \bar{\mathbf{\Upsilon}} (u) 
  =
  \sup_{\eta > 0} \inf_{v \in B_\eta(u)} \mathbf{\Upsilon}(v) \,.
\end{displaymath}
We prove in Proposition~\ref{prop:coincide} below that
$\bar{\mathbf{Q}}$, respectively $\bar{\mathbf{\Upsilon}}$, coincides
with $\mathbf{Q}$, respectively $\mathbf{\Upsilon}$, when evaluated on
piecewise constant functions. Moreover, we prove in
Corollary~\ref{prop:equal} that $\bar{\mathbf{Q}}$ also coincides with
the functional intended in~\cite[formula~(1.15)]{BressanColombo2}
or~\cite[formula~(10.10)]{BressanLectureNotes}.  Preliminarily, we
exploit the formulation~\Ref{eq:Q} to prove directly the lower
semicontinuity of $\mathbf{Q}$ and $\mathbf{\Upsilon}$.

\begin{proposition}
  \label{prop:Qlsc}
  The functionals $\bar{\mathbf{Q}}$ and $\bar{\mathbf{\Upsilon}}$ are
  lower semicontinuous with respect to the $\L1$ norm.
\end{proposition}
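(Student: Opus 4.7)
The plan is to use the fact that both functionals are sup-of-infima-over-$\L1$-balls, which almost automatically yields lower semicontinuity once one controls the balls by the triangle inequality. I will treat $\bar{\mathbf{Q}}$ in full, since the argument for $\bar{\mathbf{\Upsilon}}$ is verbatim the same (only the functional being infimized changes).

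Concretely, take any sequence $u_n \to u$ in $\L1$ with $u_n, u \in \mathcal{D}_\delta$. I want to show
\[
  \liminf_{n \to \infty} \bar{\mathbf{Q}}(u_n) \geq \bar{\mathbf{Q}}(u).
\]
Fix $\eta > 0$. For all sufficiently large $n$, one has $\|u_n - u\|_{\L1} < \eta/2$, so the triangle inequality gives the inclusion $B_{\eta/2}(u_n) \subseteq B_\eta(u)$: indeed, if $v \in \mathcal{D}_\delta^*$ with $\|v - u_n\|_{\L1} < \eta/2$, then $\|v - u\|_{\L1} < \eta$. Passing to infima reverses the inclusion, hence
\[
  \inf_{v \in B_{\eta/2}(u_n)} \mathbf{Q}(v) \geq \inf_{v \in B_\eta(u)} \mathbf{Q}(v).
\]

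Next I use the sup-characterization of $\bar{\mathbf{Q}}$ established right before the statement: for each $n$,
\[
  \bar{\mathbf{Q}}(u_n) = \sup_{\eta' > 0} \inf_{v \in B_{\eta'}(u_n)} \mathbf{Q}(v) \geq \inf_{v \in B_{\eta/2}(u_n)} \mathbf{Q}(v).
\]
Chaining the two inequalities and taking $\liminf$ as $n \to \infty$ yields
\[
  \liminf_{n \to \infty} \bar{\mathbf{Q}}(u_n) \geq \inf_{v \in B_\eta(u)} \mathbf{Q}(v).
\]
Since $\eta > 0$ was arbitrary, taking the supremum over $\eta > 0$ on the right and invoking the sup-characterization once more gives $\liminf_n \bar{\mathbf{Q}}(u_n) \geq \bar{\mathbf{Q}}(u)$, which is the required lower semicontinuity. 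The identical argument, with $\mathbf{\Upsilon}$ in place of $\mathbf{Q}$, gives lower semicontinuity of $\bar{\mathbf{\Upsilon}}$.

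I do not expect a serious obstacle here: the whole proof is an abstract consequence of the $\sup_\eta \inf_{B_\eta}$ shape of the definition together with the fact that the balls $B_\eta$ are defined by the $\L1$ norm, so they behave well under $\L1$ convergence. The only point worth double-checking is that the inclusion $B_{\eta/2}(u_n) \subseteq B_\eta(u)$ lives entirely inside $\mathcal{D}_\delta^*$ (so that the infimum over $B_\eta(u)$ really dominates the infimum over $B_{\eta/2}(u_n)$), which is immediate since both balls are defined as subsets of $\mathcal{D}_\delta^*$ by \Ref{eq:Beta}.
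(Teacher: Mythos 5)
Your proof is correct and rests on the same mechanism as the paper's: the triangle inequality relating $\L1$--balls around $u_\nu$ to balls around $u$, combined with the monotone $\sup_{\eta}\inf_{B_\eta}$ structure of the definition~\Ref{eq:Q}. The only difference is cosmetic --- the paper extracts explicit near-minimizers $v_\nu \in B_{\epsilon_\nu}(u_\nu)$ with $\epsilon_\nu \to 0$, whereas you argue directly from the inclusion $B_{\eta/2}(u_n) \subseteq B_\eta(u)$, which is if anything slightly cleaner.
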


\begin{proof}
  We prove the lower semicontinuity of $\bar{\mathbf{\Upsilon}}$, the
  case of $\bar{\mathbf{Q}}$ is analogous.

  Fix $u$ in $\mathcal{D}_\delta$. Let $u_\nu$ be a sequence in
  $\mathcal{D}_\delta$ converging to $u$ in $\L1$. Define
  $\epsilon_\nu = \norma{u_\nu-u}_{\L1} + 1/\nu$. Fix $v_\nu \in
  B_{\epsilon_\nu} (u_\nu)$ so that
  \begin{displaymath}
    \mathbf{\Upsilon} (v_\nu)
    \leq
    \inf_{v \in B_{\epsilon_\nu}(u_\nu)} \mathbf{\Upsilon}(v) +\epsilon_\nu
    \leq
    \bar{\mathbf{\Upsilon}} (u_\nu) + \epsilon_\nu \,.
  \end{displaymath}
  From $\norma{v_\nu - u}_{\L1} \leq \norma{v_\nu - u_\nu}_{\L1} +
  \norma{u_\nu - u}_{\L1} < 2 \epsilon_\nu$ we deduce $v_\nu \in
  B_{2\epsilon_\nu}(u)$ and the proof is completed with the following estimates:
  \begin{displaymath}
    \begin{array}{rcccl}
      \displaystyle
      \inf_{v \in  B_{2\epsilon_\nu}(u)} \mathbf{\Upsilon} (v) 
      & \leq &
      \displaystyle
      \mathbf{\Upsilon} (v_\nu) 
      & \leq &
      \displaystyle
      \bar{\mathbf{\Upsilon}} (u_\nu) + \epsilon_\nu \,;
      \\
      \displaystyle
      \bar{\mathbf{\Upsilon}} (u)
      & = &
      \displaystyle
      \lim_{\nu \to +\infty}
      \inf_{v \in  B_{2\epsilon_\nu}(u)} \mathbf{\Upsilon} (v) 
      & \leq &
      \displaystyle
      \liminf_{\nu\to+\infty} \bar{\mathbf{\Upsilon}} (u_\nu) \,.
    \end{array}
    \vspace{-\baselineskip}
  \end{displaymath}
\end{proof}

The next proposition contains in essence the reason why the Glimm
functionals $\mathbf{Q}$ and $\mathbf{\Upsilon}$ decrease. Compute
them on a piecewise constant function $u$ and \emph{``remove''} one
(or more) of the values attained by $u$, then the values of both
$\mathbf{Q}$ and $\mathbf{\Upsilon}$ decrease.

Let $u = \sum_{\alpha\in I} u_\alpha \,
\chi_{\left[x_{\alpha},x_{\alpha+1}\right[}$ be a piecewise constant
function, with $u_\alpha \in \Omega$, $x_1 < x_2 < \ldots < x_{N+1}$
and $I$ be a finite set of integers.  Then, we say that $u_1, u_2,
\ldots, u_N$ is the \emph{ordered sequence} of the values attained by
$u$ and we denote it by $(u_\alpha \colon \alpha \in I)$.

\begin{proposition}
  \label{prop:reduce}
  Let $u, \check u \in \mathcal{D}^*_\delta$. If the ordered sequence
  of the values attained by $u$ is $(u_\alpha \colon \alpha \in I)$
  and the ordered sequence of the values attained by $\check u$ is
  $(u_\alpha \colon \alpha \in J)$, with $J \subseteq I$, then
  $\mathbf{Q} (\check u) \leq \mathbf{Q}(u)$ and $\mathbf{\Upsilon}
  (\check u) \leq \mathbf{\Upsilon}(u)$.
\end{proposition}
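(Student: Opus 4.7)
The plan is to view $\check u$ as obtained from $u$ by a finite sequence of Glimm-type wave interactions and to invoke the classical interaction estimate of~\cite[Prop.~7.1]{BressanLectureNotes}. As a preliminary observation, $\mathbf{V}$, $\mathbf{Q}$ and $\mathbf{\Upsilon}$ depend only on the ordered sequence of values attained by a piecewise constant function, not on the specific locations of the jumps: both the strengths $\sigma_{x,i} = E(u(x-), u(x+))$ and the relation ``$x<y$'' in $\mathcal{A}(u)$ are determined by consecutive pairs of values and by the left-to-right order alone. The statement thus becomes essentially combinatorial on sequences of states.

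By induction on $\modulo{I \setminus J}$ (the base case $J=I$ being trivial), it suffices to treat the removal of a single value, i.e.~$J = I \setminus \{\alpha_0\}$. Here $\check u$ differs from $u$ only in that the two consecutive Riemann problems $(u_{\alpha_0-1}, u_{\alpha_0})$ with wave strengths $\sigma'$ and $(u_{\alpha_0}, u_{\alpha_0+1})$ with wave strengths $\sigma''$ are replaced by the single Riemann problem $(u_{\alpha_0-1}, u_{\alpha_0+1})$ with wave strengths $\sigma$. Boundary situations $\alpha_0 = \min I$ or $\alpha_0 = \max I$ fit the same picture upon treating the external value $0$ (recall $u \in \L1$) as an implicit neighbour with a degenerate jump.

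This is exactly a single wave interaction, so the standard estimate yields
\begin{displaymath}
  \sum_i \modulo{\sigma_i - \sigma'_i - \sigma''_i}
  \leq
  K \, \modulo{\sigma' \wedge \sigma''},
\end{displaymath}
where $\modulo{\sigma' \wedge \sigma''}$ denotes the total strength of approaching waves between $\sigma'$ and $\sigma''$ and $K$ depends only on $f$. I would then split $\mathbf{Q}(\check u) - \mathbf{Q}(u)$ and $\mathbf{V}(\check u) - \mathbf{V}(u)$ into an \emph{internal} contribution (the approaching pair $(\sigma', \sigma'')$ itself, producing a loss of exactly $\modulo{\sigma' \wedge \sigma''}$ in $\mathbf{Q}$) and an \emph{external} one (pairs involving $\sigma'_i$ or $\sigma''_j$ with a wave at a third position, becoming pairs involving $\sigma_i$ and the same external wave, with discrepancy controlled by $K\,\mathbf{V}(u)\,\modulo{\sigma' \wedge \sigma''}$). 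This bookkeeping gives
\begin{displaymath}
  \mathbf{Q}(\check u) - \mathbf{Q}(u)
  \leq
  -\bigl(1 - K \, \mathbf{V}(u)\bigr) \, \modulo{\sigma' \wedge \sigma''},
\end{displaymath}
\begin{displaymath}
  \mathbf{\Upsilon}(\check u) - \mathbf{\Upsilon}(u)
  \leq
  \bigl(K - C_0(1 - K \, \mathbf{V}(u))\bigr) \, \modulo{\sigma' \wedge \sigma''}.
\end{displaymath}
Both right-hand sides are non-positive once $\delta$ is small enough and $C_0$ is taken as in~\Ref{def:ups}, i.e.~the wave-front tracking choice, which concludes the proof.

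The main delicate step is the book-keeping of the external interactions, namely verifying that the approaching relation between a third wave and the merged strength $\sigma_i$ differs from that with $\sigma'_i + \sigma''_i$ only by the standard error term -- this is exactly what the classical Glimm argument provides, and it is the reason why the constant $C_0$ in~\Ref{def:ups} is inherited unchanged from~\cite[Prop.~7.1]{BressanLectureNotes}.
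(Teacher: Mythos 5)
Your proof is correct and takes essentially the same route as the paper's: reduce by induction to the removal of a single value and then invoke the classical Glimm interaction estimate for the merging of two adjacent Riemann problems into one. The paper simply cites \cite[Lemma~10.2, Step~1]{BressanLectureNotes} and the interaction estimates of \cite{Glimm} at this point; your version spells out the internal/external bookkeeping and the smallness conditions on $\delta$ and $C_0$ that the paper leaves implicit.
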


\begin{proof}
  Consider the case $\sharp I = \sharp J +1$, see
  also~\cite[Lemma~10.2, Step~1]{BressanLectureNotes}.
  \begin{figure}
    \centering
    \begin{psfrags}
      \psfrag{m}{$u_{\bar\alpha-1}$} \psfrag{b}{$u_{\bar\alpha}$}
      \psfrag{p}{$u_{\bar\alpha+1}$} \psfrag{x}{$x$} \psfrag{u}{$u$}
      \psfrag{uc}{$\check u$}
      \includegraphics[width=11cm]{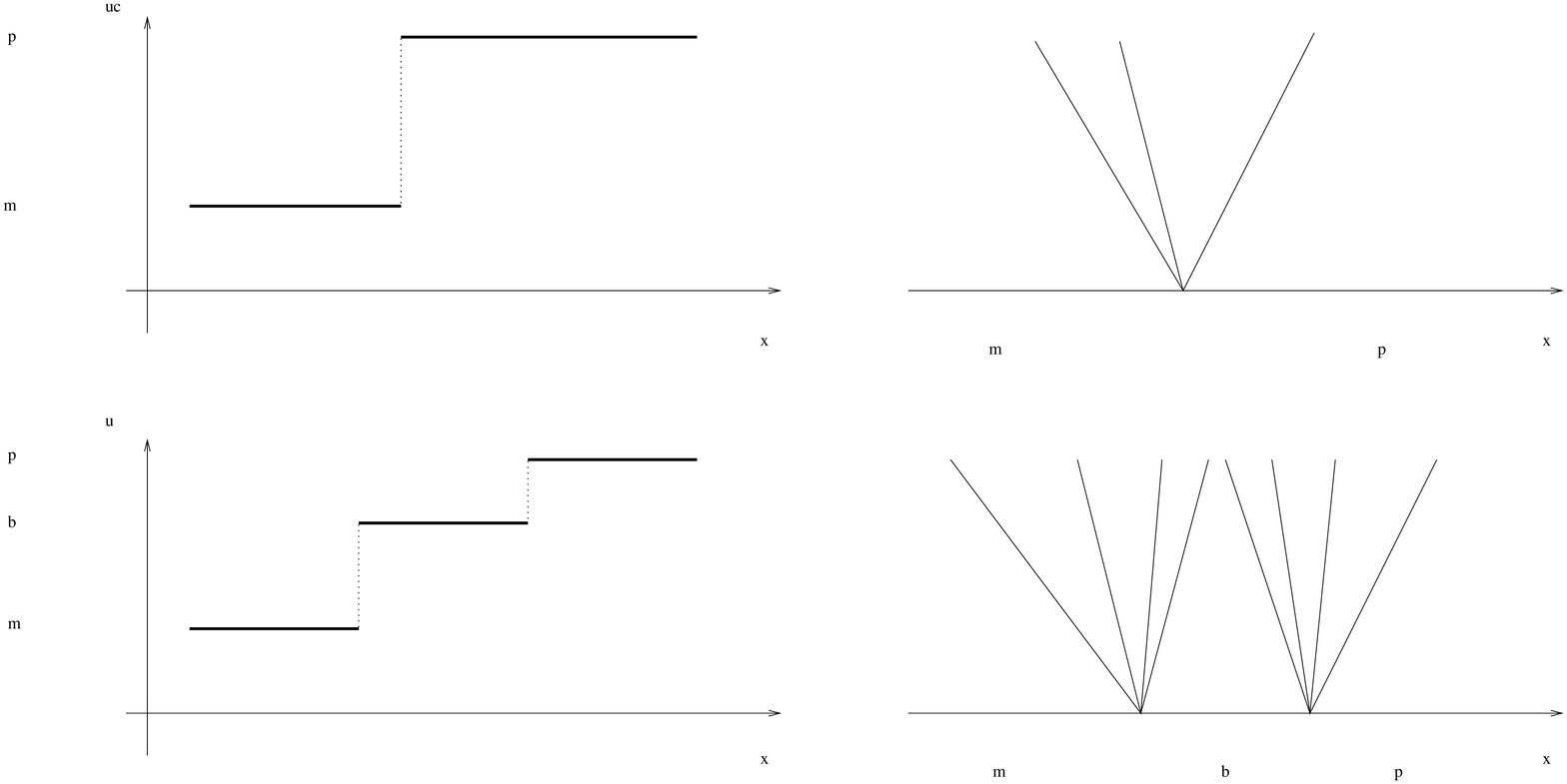}
    \end{psfrags}
    \caption{Proof of Proposition~\ref{prop:reduce}: $u_{\bar\alpha}$
      is attained by $u$ and not by $\check u$.}
    \label{fig:Glimm}
  \end{figure}
  Then, the above inequalities follow from the usual Glimm interaction
  estimates~\cite{Glimm}, see Figure~\ref{fig:Glimm}.

  The general case follows recursively.
\end{proof}

The next lemma is a particular case
of~\cite[Theorem~10.1]{BressanLectureNotes}. However, the present
construction allows to consider only the case of piecewise constant
functions, allowing a much simpler proof.

\begin{lemma}
  \label{lem:Qlsc}
  The functionals $\mathbf{Q}$ and $\mathbf{\Upsilon}$, defined on
  $\mathcal{D}_\delta^*$, are lower semicontinuous with respect to the
  $\L1$ norm.
\end{lemma}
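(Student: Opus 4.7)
The plan is to combine Proposition \ref{prop:reduce} with the Lipschitz continuity from Remark \ref{rem:Lipschitz} to ``compress'' any approximating sequence into one whose structure matches that of $u$. Given $u \in \mathcal{D}_\delta^*$ with jumps at $x_1 < \ldots < x_N$ and constant values $u_0 = 0, u_1, \ldots, u_{N-1}, u_N = 0$ on the $N+1$ resulting intervals (the extreme values vanish by $\L1$ integrability), I want to show $\mathbf{Q}(u) \leq \liminf_\nu \mathbf{Q}(u_\nu)$ for any sequence $u_\nu \in \mathcal{D}_\delta^*$ with $u_\nu \to u$ in $\L1$; the argument for $\mathbf{\Upsilon}$ is identical. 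By collapsing redundant intervals, I may assume $u_{\alpha-1} \neq u_\alpha$ for each real jump.

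First I would pass to a subsequence of $u_\nu$ realizing the $\liminf$ and along which $u_\nu \to u$ pointwise a.e. For each $\alpha \in \{1, \ldots, N-1\}$ choose a point $y_\alpha \in (x_\alpha, x_{\alpha+1})$ at which $u_\nu(y_\alpha) \to u_\alpha$; such points form a full-measure subset of the interval, so the selection is always possible.

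Next I would construct the reduced piecewise constant function $\check u_\nu$ with jumps exactly at $x_1, \ldots, x_N$ and with ordered sequence of values $(0, u_\nu(y_1), \ldots, u_\nu(y_{N-1}), 0)$. Since $u_\nu$ is $\L1$ and piecewise constant, the first and last entries of its own ordered sequence of values are both $0$, and the selections $u_\nu(y_\alpha)$ occur in $u_\nu$ in the order $y_1 < \ldots < y_{N-1}$; for $\nu$ large the corresponding indices are distinct, because the limits $u_\alpha$ are pairwise distinct at consecutive positions. Hence the ordered sequence of values of $\check u_\nu$ is a genuine sub-sequence of that of $u_\nu$, and Proposition \ref{prop:reduce} yields
\begin{displaymath}
  \mathbf{Q}(\check u_\nu) \leq \mathbf{Q}(u_\nu)
  \qquad \mbox{and} \qquad
  \mathbf{\Upsilon}(\check u_\nu) \leq \mathbf{\Upsilon}(u_\nu)\,.
\end{displaymath}

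Finally, $\check u_\nu$ has fixed jump locations $x_1, \ldots, x_N$ and its values converge componentwise to those of $u$; Remark \ref{rem:Lipschitz} therefore gives $\mathbf{Q}(\check u_\nu) \to \mathbf{Q}(u)$ and $\mathbf{\Upsilon}(\check u_\nu) \to \mathbf{\Upsilon}(u)$, and combining with the previous inequality yields the lower semicontinuity. The main obstacle I anticipate is the bookkeeping at the unbounded tails: one must explicitly use that an $\L1$ piecewise constant function takes the value $0$ on its two extreme intervals, so that the terminal zeros of $\check u_\nu$ really do appear as entries of $u_\nu$'s ordered sequence and Proposition \ref{prop:reduce} applies cleanly without any ``artificial'' insertion of values.
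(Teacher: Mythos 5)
Your proof is correct and follows essentially the same route as the paper's: pass to an a.e.--convergent subsequence realizing the liminf, select points $y_\alpha$ in each constancy interval of $u$, compress $u_\nu$ to $\check u_\nu$ so that Proposition~\ref{prop:reduce} applies, and conclude with the Lipschitz continuity of Remark~\ref{rem:Lipschitz}. The extra care you take with the zero tails and with the distinctness of the selected indices merely makes explicit bookkeeping that the paper leaves implicit.
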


\begin{proof}
  We consider only $\mathbf{\Upsilon}$, the case of $\mathbf{Q}$ being
  similar.

  Let $u_\nu$ be a sequence in $\mathcal{D}_\delta^*$ converging in
  $\L1$ to $u=\sum_\alpha
  u_\alpha\chi_{[x_\alpha,x_{\alpha+1}[}\in\mathcal{D}_\delta^*$ as
  $\nu \to +\infty$. By possibly passing to a subsequence, we may
  assume that $\mathbf{\Upsilon} (u_\nu)$ converges to
  $\liminf_{\nu\to+\infty} \mathbf{\Upsilon} (u_\nu)$ and that $u_\nu$
  converges a.e.~to $u$.  Therefore, for all $\alpha=1, \ldots, N$, we
  can select points $y_\alpha \in \left]x_{\alpha},
    x_{\alpha+1}\right[$ so that $\lim_{\nu\to+\infty} u_\nu(y_\alpha)
  = u(y_\alpha)=u_\alpha$. Define
  \begin{displaymath}
    \check u_\nu 
    =
    \sum_{\alpha} u_\nu(y_\alpha) \, \chi_{\left[x_{\alpha},
        x_{\alpha+1}\right[} \,.
  \end{displaymath}
  By Proposition~\ref{prop:reduce}, $\mathbf{\Upsilon}(\check u_\nu)
  \leq \mathbf{\Upsilon}(u_\nu)$. The convergence $u_\nu (y_\alpha)
  \to u_\alpha$ for all $\alpha$ and Remark~\ref{rem:Lipschitz} allow
  to complete the proof.
\end{proof}

\begin{proposition}
  \label{prop:coincide}
  Let $u \in \mathcal{D}_\delta^*$. Then $\bar{\mathbf{Q}} (u) =
  \mathbf{Q} (u)$ and $\bar{\mathbf{\Upsilon}} (u) = \mathbf{\Upsilon}
  (u)$.
\end{proposition}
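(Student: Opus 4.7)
The statement reduces to a simple two-sided inequality once one notices that, for $u \in \mathcal{D}_\delta^*$, the constant function $v=u$ itself belongs to $B_\eta(u)$ for every $\eta>0$. My plan is thus to verify the two inequalities separately and to use Lemma~\ref{lem:Qlsc} in the nontrivial direction.

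First I would handle $\bar{\mathbf{\Upsilon}}(u) \leq \mathbf{\Upsilon}(u)$. Since $u \in \mathcal{D}_\delta^*$, taking $v = u$ in the definition~\Ref{eq:Q} immediately yields $\inf_{v \in B_\eta(u)} \mathbf{\Upsilon}(v) \leq \mathbf{\Upsilon}(u)$ for every $\eta>0$, and passing to the limit $\eta \to 0+$ gives the desired bound. The same argument works verbatim for $\bar{\mathbf{Q}}$.

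For the opposite inequality $\bar{\mathbf{\Upsilon}}(u) \geq \mathbf{\Upsilon}(u)$ I would use Lemma~\ref{lem:Qlsc}. For each $\eta>0$ pick $v_\eta \in B_\eta(u)$ satisfying
\begin{displaymath}
  \mathbf{\Upsilon}(v_\eta)
  \leq
  \inf_{v \in B_\eta(u)} \mathbf{\Upsilon}(v) + \eta
  \leq
  \bar{\mathbf{\Upsilon}}(u) + \eta \,.
\end{displaymath}
By construction $v_\eta \in \mathcal{D}_\delta^*$ and $\|v_\eta - u\|_{\L1} < \eta$, so $v_\eta \to u$ in $\L1$ as $\eta \to 0+$. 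Since $u \in \mathcal{D}_\delta^*$, the lower semicontinuity of $\mathbf{\Upsilon}$ on $\mathcal{D}_\delta^*$ from Lemma~\ref{lem:Qlsc} gives
\begin{displaymath}
  \mathbf{\Upsilon}(u)
  \leq
  \liminf_{\eta \to 0+} \mathbf{\Upsilon}(v_\eta)
  \leq
  \bar{\mathbf{\Upsilon}}(u) \,.
\end{displaymath}
The argument for $\bar{\mathbf{Q}}$ is identical, using the lower semicontinuity of $\mathbf{Q}$ stated in the same lemma.

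There is no genuine obstacle here: the whole content lies in Lemma~\ref{lem:Qlsc}, which has already been proved. The only small point to be careful about is that the approximating sequence $v_\eta$ must land in $\mathcal{D}_\delta^*$ (so that Lemma~\ref{lem:Qlsc} applies to $v_\eta$), and this is automatic from the definition~\Ref{eq:Beta} of $B_\eta(u)$. Thus the proposition follows in just a few lines.
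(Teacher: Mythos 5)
Your proof is correct and follows essentially the same route as the paper: the easy inequality from $u \in B_\eta(u)$, and the reverse inequality by applying the lower semicontinuity of Lemma~\ref{lem:Qlsc} to a near-minimizing sequence in $B_\eta(u)$ converging to $u$ in $\L1$. The only cosmetic difference is that you index the minimizing family by $\eta$ with an explicit $+\eta$ error rather than extracting a sequence $v_\nu$ with $\mathbf{\Upsilon}(v_\nu)\to\bar{\mathbf{\Upsilon}}(u)$, which is immaterial.
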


\begin{proof}
  We consider only $\mathbf{\Upsilon}$, the case of $\mathbf{Q}$ being
  similar.

  Since $u\in\mathcal{D}_\delta^*$, we have that $u \in B_\eta(u)$ for
  all $\eta >0$ and $\bar{\mathbf{\Upsilon}} (u) \leq
  \mathbf{\Upsilon} (u)$.  To prove the other inequality, recall that
  by the definition~\Ref{eq:Q} of $\bar{\mathbf{\Upsilon}}$, there
  exists a sequence $v_\nu$ of piecewise constant functions in
  $\mathcal{D}_\delta^*$ such that $v_\nu \to u$ in $\L1$ and
  $\mathbf{\Upsilon}(v_\nu) \to \bar{\mathbf{\Upsilon}}(u)$ as $\nu
  \to +\infty$. By Lemma~\ref{lem:Qlsc}, $\mathbf{\Upsilon} (u) \leq
  \liminf_{\nu\to +\infty} \mathbf{\Upsilon}(v_\nu) \leq
  \bar{\mathbf{\Upsilon}} (u)$, completing the proof.
\end{proof}

Therefore, in the sequel we write $\mathbf{Q}$ for $\bar{\mathbf{Q}}$
and $\mathbf{\Upsilon}$ for $\bar{\mathbf{\Upsilon}}$.

\smallskip

\noindent Since we will need the explicit dependence of the Stability
Functional on the various quantity it is made of, we introduce the
following notations. If $\delta \in \left]0, \delta_o\right[$, for any
$\bar v\in\mathcal{D}_\delta^*$, denote by $\bar\sigma_{x,i}$ the size
of the $i$--wave in the solution of the Riemann Problem with data
$\bar v(x-)$ and $\bar v(x+)$. Then define
\begin{displaymath}
  A_j^- [\bar v](x)
  =
  \sum_{y\leq x} \modulo{\bar\sigma_{y,j}},
  \quad 
  A_j^+ [\bar v](x)
  =
  \sum_{y> x} \modulo{\bar\sigma_{y,j}},
  \quad
  \mbox{ for } j = 1,\ldots,n\,.
\end{displaymath}
If the $i$-th characteristic field is linearly degenerate, then
define $\mathbf{A}_i$ as
\begin{displaymath}
  \mathbf{A}_i\left[\bar v\right]\left(q,x\right)
  =
  \sum_{1\leq j<i}A_j^+\left[\bar v\right](x) + 
  \sum_{ i<j\leq n}A_j^-\left[\bar v\right](x).
\end{displaymath}
While if the $i$-th characteristic field is genuinely nonlinear
\begin{eqnarray*}
  \mathbf{A}_i [\bar v] (q,x)
  & = &
  \sum_{1\leq j<i} A_j^+ [\bar v] (x) 
  + 
  \sum_{ i<j\leq n}A_j^- [\bar v] (x)
  \\
  & &
  +
  A_i^+ [\bar v] (x) \cdot \caratt{\left[0, +\infty\right[} (q)
  +
  A_i^- [\bar v] (x) \cdot \caratt{\left]-\infty, 0\right[} (q)
\end{eqnarray*}
Now choose $v,\tilde v$ piecewise constant in $\mathcal{D}_\delta^*$
and define the weights
\begin{equation}
  \label{def:w}
  \begin{array}{rcl}
    \mathbf{W}_i [v,\tilde v](q,x)
    & = &
    1
    +
    \kappa_1 \mathbf{A}_i [v] (q,x)
    +
    \kappa_1 \mathbf{A}_i [\tilde v] (-q,x)
    \\[2pt]
    & &
    +
    \kappa_1 \kappa_2 \left(\mathbf{Q} (v) + \mathbf{Q} (\tilde v) \right)\,.
  \end{array}
\end{equation}
the constants $\kappa_1$ and $\kappa_2$ being defined
in~\cite[Chapter~8]{BressanLectureNotes}. We may now define a slightly
modified version of the stability functional,
see~\cite{BressanYangLiu, LiuYang1, LiuYang3} and
also~\cite[Section~8.1]{BressanLectureNotes}.  Namely, we give a
similar functional defined on all piecewise constant functions and
without any reference to both $\epsilon$--approximate front tracking
solutions and non physical waves.

Define implicitly the function $\mathbf{q}(x) \equiv \left( q_1(x),
  \ldots, q_n(x) \right)$ by
\begin{displaymath}
  \tilde v(x)
  =
  \mathbf{S} \left( \mathbf{q}(x) \right) \left(v(x)\right)
\end{displaymath}
with $\mathbf{S}$ as in~\Ref{eq:S}. The stability functional $\gf$
is
\begin{equation}
  \label{eq:Phi}
  \gf(v,\tilde v)
  =
  \sum_{i=1}^n \int_{-\infty}^{+\infty}
  \modulo{q_i(x)} \cdot 
  \mathbf{W}_i [v,\tilde v] \left(q_i(x),x\right) \, dx.
\end{equation}

We stress that $\gf$ is slightly different from the functional $\Phi$
defined in~\cite[formula~(8.6)]{BressanLectureNotes}. Indeed, here
\emph{all} jumps in $v$ or in $\tilde v$ are considered. There, on the
contrary, exploiting the structure of $\epsilon$-approximate front
tracking solutions, see~\cite[Definition~7.1]{BressanLectureNotes}, in
the definition of $\Phi$ the jumps due to non physical waves are
neglected when defining the weights $A_i$ and are considered as
belonging to a fictitious $(n+1)$-th family in the
definition~\cite[formula~(7.54)]{BressanLectureNotes} of $Q$. To
stress this difference, in the sequel we denote by $\Phi^\epsilon$ the
stability functional as presented
in~\cite[Chapter~8]{BressanLectureNotes}.

\begin{remark}
  \label{rem:ContPhi}
  For fixed $x_1< \ldots < x_{N+1}$, $\tilde x_1< \ldots < \tilde
  x_{\tilde N+1}$, the map
  \begin{displaymath}
    \left(
      \begin{array}{c}
        u_1, \ldots, u_N
        \\
        \tilde u_1, \ldots, \tilde u_{\tilde N}
      \end{array}
    \right) 
    \mapsto 
    \mathbf{\Phi} \left( 
      \sum_{\alpha=1}^N u_\alpha \, 
      \chi_{\left[x_{\alpha}, x_{\alpha+1}\right[},
      \sum_{\alpha=1}^{\tilde N} \tilde u_\alpha \, 
      \chi_{\left[\tilde x_{\alpha}, \tilde x_{\alpha+1}\right[}
    \right)
  \end{displaymath}
  is continuous. Indeed, since both maps $q \mapsto q \caratt{\left[0,
      +\infty\right[} (q)$ and $q \mapsto q \caratt{\left]-\infty,
      0\right[} (q)$ are Lipschitz, for any fixed $x \in \reali$ the
  integrand in~\Ref{eq:Phi} depends continuously on
  $\{u_\alpha\}_{\alpha=1}^N$, $\{\tilde u_\alpha\}_{\alpha=1}^{\tilde
    N}$ and the Dominated Convergence Theorem applies.
\end{remark}

We now move towards the extension of $\gf$ to $\mathcal{D}_{\delta}$.
Define
\begin{displaymath}
  \mathbf{\Xi}_\eta (u, \tilde u)
  =
  \inf \left\{
    \gf(v,\tilde v) \colon 
    v \in B_\eta (u) \mbox{ and }
    \tilde v \in B_\eta (\tilde u)
  \right\}
\end{displaymath}
The map $\eta \to \mathbf{\Xi}_\eta(u,\tilde u)$ is non increasing.
Thus, we may finally define
\begin{equation}
  \label{eq:Xi}
  \mathbf{\Xi}(u,\tilde u)
  =
  \lim_{\eta \to 0+} \mathbf{\Xi}_\eta (u, \tilde u) 
  =
  \sup_{\eta > 0} \mathbf{\Xi}_\eta (u, \tilde u)
  \,.
\end{equation}

We are now ready to state the main result of this section.

\begin{theorem}
  \label{thm:main}
  Let $f$ satisfy~\textbf{(F)}. The functional $\mathbf{\Xi} \colon
  \mathcal{D}_{\delta} \times \mathcal{D}_{\delta} \mapsto \left[0,
    +\infty\right[$ defined in~\Ref{eq:Xi} enjoys the
  properties~\textit{(i)}, \textit{(ii)} and~\textit{(iii)} in
  Theorem~\ref{thm:one}.
\end{theorem}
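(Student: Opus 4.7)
My plan is to reduce each of the three properties~(i)--(iii) to the corresponding property of $\gf$ on piecewise constant pairs in $\mathcal{D}_\delta^* \times \mathcal{D}_\delta^*$, where the classical Bressan--Liu--Yang theory applies, and then to pass to the limit through the definition~\Ref{eq:Xi}, exploiting the density of $\mathcal{D}_\delta^*$ in $\mathcal{D}_\delta$ and the $\L1$--Lipschitz continuity of $S$ granted by Theorem~\ref{thm:SRS}.

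For~(i) the starting point is the equivalence $c \norma{v-\tilde v}_{\L1} \leq \gf(v,\tilde v) \leq C \norma{v-\tilde v}_{\L1}$ on $\mathcal{D}_\delta^*\times\mathcal{D}_\delta^*$, which follows from the bi-Lipschitz character of the map $u^+\mapsto\mathbf{q}$ implicitly defined by $u^+=\mathbf{S}(\mathbf{q})(u^-)$ together with the two-sided bound $1 \leq \mathbf{W}_i \leq 1 + K\delta$ on the weights~\Ref{def:w} (which in turn comes from $\mathbf{\Upsilon}(v),\mathbf{\Upsilon}(\tilde v) < \delta$). The upper bound $\mathbf{\Xi}(u,\tilde u) \leq C\norma{u-\tilde u}_{\L1}$ is then obtained by picking piecewise constant $v_\nu \to u$, $\tilde v_\nu \to \tilde u$ in $\L1$ (available by the very definition of $\mathcal{D}_\delta$), noting that $v_\nu \in B_\eta(u), \tilde v_\nu \in B_\eta(\tilde u)$ eventually, and passing to the limit in $\mathbf{\Xi}_\eta(u,\tilde u) \leq \gf(v_\nu,\tilde v_\nu) \leq C\norma{v_\nu - \tilde v_\nu}_{\L1}$. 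The lower bound comes from the triangle inequality: any $v \in B_\eta(u)$, $\tilde v \in B_\eta(\tilde u)$ satisfies $\norma{v-\tilde v}_{\L1} \geq \norma{u-\tilde u}_{\L1} - 2\eta$, hence $\gf(v,\tilde v) \geq c(\norma{u-\tilde u}_{\L1} - 2\eta)$, and one passes to the supremum in $\eta$.

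Part~(ii) is the main difficulty. Given $u,\tilde u\in\mathcal{D}_\delta$ and $t\geq 0$, for each $\eta > 0$ I would pick piecewise constant $v \in B_\eta(u)$, $\tilde v \in B_\eta(\tilde u)$ with $\gf(v,\tilde v) \leq \mathbf{\Xi}(u,\tilde u) + \eta$, build $\epsilon$--approximate front tracking solutions $v^\epsilon(\tau), \tilde v^\epsilon(\tau)$ issuing from $v,\tilde v$ (which are piecewise constant in $x$ at every $\tau$ and converge in $\L1$ to $S_\tau v, S_\tau\tilde v$ as $\epsilon\to 0$), and then aim to establish the almost-monotonicity
\[
  \gf\!\left(v^\epsilon(t),\tilde v^\epsilon(t)\right) \leq \gf(v,\tilde v) + o(1) \quad \mbox{as } \epsilon \to 0.
\]
This last estimate is the heart of the argument: I expect it to follow by reproducing the classical Bressan--Liu--Yang interaction analysis at each physical wave collision, combined with the fact that the total strength of non-physical waves in $v^\epsilon,\tilde v^\epsilon$ vanishes with $\epsilon$, so that their contribution to $\gf$---which treats \emph{all} jumps as physical---is negligible in the limit. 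A convenient way to make this quantitative is to compare $\gf$ with the book functional $\Phi^\epsilon$ of~\cite[Chapter~8]{BressanLectureNotes} and invoke Remark~\ref{rem:ContPhi}. Granting the almost-monotonicity, Theorem~\ref{thm:SRS} gives $\norma{S_t v - S_t u}_{\L1},\, \norma{S_t\tilde v - S_t\tilde u}_{\L1} \leq L\eta$, so for $\epsilon$ small enough $v^\epsilon(t)\in B_{(L+1)\eta}(S_t u)$ and $\tilde v^\epsilon(t) \in B_{(L+1)\eta}(S_t\tilde u)$, whence
\[
  \mathbf{\Xi}_{(L+1)\eta}(S_t u, S_t\tilde u) \leq \gf\!\left(v^\epsilon(t),\tilde v^\epsilon(t)\right) \leq \gf(v,\tilde v) + o(1) \leq \mathbf{\Xi}(u,\tilde u) + \eta + o(1).
\]
Sending $\epsilon \to 0$ first, then $\eta \to 0+$, and recalling $\mathbf{\Xi}(S_t u, S_t\tilde u) = \sup_{\eta'>0}\mathbf{\Xi}_{\eta'}(S_t u, S_t\tilde u)$, delivers~(ii).

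Property~(iii) should follow the blueprint of Proposition~\ref{prop:Qlsc}. Given $u_\nu \to u$, $\tilde u_\nu \to \tilde u$ in $\L1$, I set $\epsilon_\nu = \norma{u_\nu - u}_{\L1} + \norma{\tilde u_\nu - \tilde u}_{\L1} + 1/\nu$ and pick piecewise constant $v_\nu \in B_{\epsilon_\nu}(u_\nu)$, $\tilde v_\nu \in B_{\epsilon_\nu}(\tilde u_\nu)$ with $\gf(v_\nu,\tilde v_\nu) \leq \mathbf{\Xi}(u_\nu,\tilde u_\nu) + \epsilon_\nu$. The triangle inequality gives $v_\nu \in B_{2\epsilon_\nu}(u)$ and $\tilde v_\nu \in B_{2\epsilon_\nu}(\tilde u)$, so for each fixed $\eta > 0$ and all $\nu$ large, $\mathbf{\Xi}_\eta(u,\tilde u) \leq \gf(v_\nu,\tilde v_\nu) \leq \mathbf{\Xi}(u_\nu,\tilde u_\nu) + \epsilon_\nu$. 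Taking $\liminf_\nu$ and then $\sup_\eta$ yields $\mathbf{\Xi}(u,\tilde u) \leq \liminf_\nu \mathbf{\Xi}(u_\nu,\tilde u_\nu)$, i.e.~the desired lower semicontinuity.
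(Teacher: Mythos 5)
Your proposal follows essentially the same route as the paper for all three parts: (i) by sandwiching $\mathbf{\Xi}_\eta$ between $\frac1C(\norma{u-\tilde u}_{\L1}-2\eta)$ and $C(\norma{u-\tilde u}_{\L1}+2\eta)$ using the two--sided bound on the weights and~\cite[formula~(8.5)]{BressanLectureNotes}; (ii) by front tracking plus the $O(\epsilon)$ comparison between $\gf$ and $\Phi^\epsilon$; (iii) by the diagonal argument of Proposition~\ref{prop:Qlsc}. Two remarks on~(ii), which is where all the substance lies. First, the ``almost--monotonicity'' $\gf(v^\epsilon(t),\tilde v^\epsilon(t))\leq\gf(v,\tilde v)+o(1)$ is exactly the combination of the paper's Proposition~\ref{lem:diet} with~\cite[Theorem~8.2]{BressanLectureNotes}; you correctly identify the mechanism (the discrepancy between $\mathbf{W}_i$ and the book's weights comes only from non--physical waves, of total strength $O(\epsilon)$), but you only assert it, and Remark~\ref{rem:ContPhi} is not the relevant tool -- the needed statement is $\modulo{\gf-\Phi^\epsilon}\leq C\epsilon\norma{w-\tilde w}_{\L1}$, which deserves its own proof. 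Second, your conclusion via $\mathbf{\Xi}_{(L+1)\eta}(S_tu,S_t\tilde u)\leq\gf(v^\epsilon(t),\tilde v^\epsilon(t))$ is a slightly more direct variant of the paper's double appeal to the lower semicontinuity of $\mathbf{\Xi}$ (Proposition~\ref{prop:lsc}), but it silently uses $v^\epsilon(t)\in B_{(L+1)\eta}(S_tu)$, which by the definition~\Ref{eq:Beta} requires $v^\epsilon(t)\in\mathcal{D}_\delta^*$, i.e.\ $\mathbf{\Upsilon}(v^\epsilon(t))<\delta$; the paper checks this explicitly via $\mathbf{\Upsilon}(v^\epsilon(t))\leq\Upsilon^\epsilon(v^\epsilon)(0)+C\epsilon=\mathbf{\Upsilon}(v)+C\epsilon<\delta$ for $\epsilon$ small, and you should too. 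With these two points supplied, the argument is complete and equivalent to the paper's.
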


\noindent Here and in what follows, we denote by $C$ positive
constants dependent only on $f$ and $\delta_0$. We split the proof of
the above theorem in several steps.

\begin{lemma}
  \label{lem:inequality}
  For all $u,\tilde u \in \mathcal{D}_\delta^*$, one has $\mathbf{\Xi}
  (u,\tilde u) \leq \gf (u,\tilde u)$.
\end{lemma}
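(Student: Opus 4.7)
The plan is to observe that the inequality is immediate from the definitions, so the proof reduces to checking that $(u,\tilde u)$ itself is an admissible competitor in the infimum defining $\mathbf{\Xi}_\eta$. The key point is that since $u,\tilde u \in \mathcal{D}_\delta^*$, one has $u \in B_\eta(u)$ and $\tilde u \in B_\eta(\tilde u)$ for every $\eta>0$, because $\|u-u\|_{\L1}=\|\tilde u-\tilde u\|_{\L1}=0<\eta$ and both functions belong to $\mathcal{D}_\delta^*$ (compare~(\ref{eq:Beta})).

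First, I would fix an arbitrary $\eta>0$ and use the above observation to conclude that the pair $(u,\tilde u)$ lies in the set over which the infimum in the definition of $\mathbf{\Xi}_\eta(u,\tilde u)$ is taken. Hence
\begin{displaymath}
\mathbf{\Xi}_\eta(u,\tilde u) \;=\; \inf\bigl\{\gf(v,\tilde v) \colon v \in B_\eta(u),\ \tilde v \in B_\eta(\tilde u)\bigr\} \;\leq\; \gf(u,\tilde u)\,.
\end{displaymath}
Then, passing to the supremum (equivalently, the limit as $\eta\to 0+$) on the left and appealing to~(\ref{eq:Xi}), I obtain
\begin{displaymath}
\mathbf{\Xi}(u,\tilde u) \;=\; \sup_{\eta>0}\mathbf{\Xi}_\eta(u,\tilde u) \;\leq\; \gf(u,\tilde u)\,,
\end{displaymath}
which is the desired bound. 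There is essentially no obstacle: the content of the lemma is simply the consistency check that the extension procedure defining $\mathbf{\Xi}$ does not overshoot $\gf$ on piecewise constant data; the reverse inequality, which will be the genuinely nontrivial fact, must be addressed elsewhere via the continuity property recorded in Remark~\ref{rem:ContPhi}.
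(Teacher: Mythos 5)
Your proof is correct and is essentially identical to the paper's: both argue that $u\in B_\eta(u)$ and $\tilde u\in B_\eta(\tilde u)$ for every $\eta>0$, so the pair $(u,\tilde u)$ is a competitor in the infimum defining $\mathbf{\Xi}_\eta$, and then pass to the limit $\eta\to 0+$. No issues.
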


\begin{proof}
  By the definition~\Ref{eq:Beta} we have $u\in B_\eta(u)$ and $\tilde
  u \in B_\eta (\tilde u)$ for all $\eta >0$, hence $\mathbf{\Xi}_\eta
  (u,\tilde u) \leq \gf(u,\tilde u)$ for all positive $\eta$. The
  lemma is proved passing to the limit $\eta \to 0+$.
\end{proof}

\begin{lemma}
  \label{lemma:weights}
  Let $u,\;\check u\in\mathcal{D}_\delta^*$ and $q\in\reali$. Assume
  that $\check u$ is given by
  \begin{displaymath}
    \check u
    =
    \sum_{\alpha=1}^N u(y_\alpha) \chi_{\left[x_\alpha,x_{\alpha+1}\right[}
  \end{displaymath}
  where $x_1<\ldots<x_{N+1}$ and $y_\alpha\in [x_\alpha,x_{\alpha+1}[$
  are given real points.  Then,
  \begin{displaymath}
    \mathbf{A}_i\left[\check u\right](q,x)
    +
    \kappa_2 \mathbf{Q}(\check u) 
    \leq
    \mathbf{A}_i\left[ u\right](q,x)
    +
    \kappa_2 \mathbf{Q}(u) + C \cdot \norma{\check  u(x)-u(x)} \, .
  \end{displaymath}
\end{lemma}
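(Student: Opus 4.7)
The strategy is to apply the compound Glimm interaction estimate to the $\check u$-waves, then carefully split the positional sums at $x$.

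First, let $\sigma^{\check u}_{x_\beta,j}$ denote the $j$-th wave in the Riemann problem $u(y_{\beta-1})\to u(y_\beta)$ located at $x_\beta$ in $\check u$. Iterating the pairwise Glimm interaction estimate along the successive $u$-jumps inside $[y_{\beta-1},y_\beta]$ yields
\[
\sigma^{\check u}_{x_\beta,j} \;=\; \sum_{z \in [y_{\beta-1},y_\beta]} \sigma^u_{z,j} \;+\; E^j_\beta, \qquad |E^j_\beta| \;\le\; C\, Q^{[y_{\beta-1},y_\beta]}(u),
\]
where $Q^{[a,b]}(u)$ is the interaction potential of the pairs of $u$-waves with both positions in $[a,b]$. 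The same pair-counting underlying the decrease of the Glimm functional also delivers
\[
\sum_\beta Q^{[y_{\beta-1},y_\beta]}(u) \;\le\; C\bigl(\mathbf{Q}(u) - \mathbf{Q}(\check u)\bigr),
\]
because the approaching pairs of $u$-waves sitting in a common $[y_{\beta-1},y_\beta]$ are precisely those destroyed in passing to $\check u$, while pairs spanning two distinct sub-intervals essentially account for $\mathbf{Q}(\check u)$, modulo the harmless quadratic correction allowed by $\mathbf{V}(u)<\delta$.

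Fix now $x$ and let $\alpha$ be the index with $x\in[x_\alpha,x_{\alpha+1}[$. Summing the compound estimate over $\beta \ge \alpha+1$ gives
\[
A_j^+[\check u](x) \;\le\; \sum_{\beta \ge \alpha+1} \Bigl|\sum_{z \in [y_{\beta-1},y_\beta]} \sigma^u_{z,j}\Bigr| + C\bigl(\mathbf{Q}(u) - \mathbf{Q}(\check u)\bigr).
\]
If $x\le y_\alpha$, every sub-interval lies in $(x,+\infty)$ and the triangle inequality yields $A_j^+[\check u](x)\le A_j^+[u](x)+C(\mathbf{Q}(u)-\mathbf{Q}(\check u))$. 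If $x>y_\alpha$, only the sub-interval $[y_\alpha,y_{\alpha+1}]$ straddles $x$: I split its inner sum at $x$, control the portion on $(x,y_{\alpha+1}]$ by $A_j^+[u](x)$ through the triangle inequality, and recognize the signed partial sum on $[y_\alpha,x]$ as the $j$-component of $u(x)-u(y_\alpha)$ in the Riemann coordinates $E$, hence bounded by $C\norma{u(x)-u(y_\alpha)}=C\norma{\check u(x)-u(x)}$ up to an $O(\delta)\cdot Q$ nonlinear correction reabsorbed into the $\mathbf{Q}$-difference. Either way
\[
A_j^+[\check u](x) \;\le\; A_j^+[u](x) + C\norma{\check u(x)-u(x)} + C\bigl(\mathbf{Q}(u)-\mathbf{Q}(\check u)\bigr),
\]
and the symmetric argument provides the identical bound for $A_j^-[\check u](x)$, with the straddling case now occurring for $x<y_\alpha$.

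Summing over $j$ and adding the sign-dependent contribution $A_i^\pm[\check u](x)$ for $q\gtrless 0$ in the genuinely nonlinear case produces
\[
\mathbf{A}_i[\check u](q,x) \;\le\; \mathbf{A}_i[u](q,x) + C\norma{\check u(x)-u(x)} + C'\bigl(\mathbf{Q}(u)-\mathbf{Q}(\check u)\bigr),
\]
and the conclusion follows since the constant $\kappa_2$ fixed in~\cite[Chapter~8]{BressanLectureNotes} is already chosen large enough that $\kappa_2 \ge C'$. The main obstacle is the identification, in the middle paragraph, of the signed partial sum on $[y_\alpha,x]$ with a displacement coordinate: the strengths $\sigma^u_{z,j}$ are read in the Riemann coordinates attached to each $u(z-)$ rather than in a fixed eigenvector basis, so the identification holds only to leading order, and the quadratic discrepancy must be verified to be dominated by the interaction potentials that are already on the right-hand side.
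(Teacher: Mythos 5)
Your argument is correct in substance, but it takes a genuinely different route from the paper's. The paper localizes everything at the fixed point $\bar x$ and passes from $u$ to $\check u$ by recursively applying three elementary moves: shifting jump points without letting any cross $\bar x$ (which changes nothing), merging adjacent jumps lying on the same side of $\bar x$ (where the increase of $\mathbf{A}_i$ is bounded by the interaction potential of the two merged jumps and absorbed by the decrease of $\kappa_2\mathbf{Q}$ --- the pairwise Glimm estimate), and finally changing the single value on the interval containing $\bar x$ (controlled by the uniform Lipschitz bound of Remark~\ref{rem:Lipschitz}, which is what produces the term $C\,\norma{\check u(x)-u(x)}$). You instead perform all the merges in one shot via the cumulative interaction estimate $\sigma^{\check u}_{x_\beta,j}=\sum_z\sigma^u_{z,j}+O(1)\,Q^{\beta}$ and then split the positional sums at $x$ by hand; the straddling block is handled by identifying the signed partial sum over $\left]y_\alpha,x\right]$ with $E_j\left(u(y_\alpha),u(x)\right)$ up to interaction errors, giving $C\,\norma{u(y_\alpha)-u(x)}=C\,\norma{\check u(x)-u(x)}$. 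The two closing mechanisms are thus the same in both proofs (absorption of interaction errors into $\kappa_2\left(\mathbf{Q}(u)-\mathbf{Q}(\check u)\right)$, and a displacement bound for the block containing $x$); yours is more computational, the paper's more structural and easier to check. The one step you assert rather than establish is the quantitative decrease $\sum_\beta Q^{[y_{\beta-1},y_\beta]}(u)\leq C\left(\mathbf{Q}(u)-\mathbf{Q}(\check u)\right)$: this is true, but it strengthens Proposition~\ref{prop:reduce} (which only gives $\mathbf{Q}(\check u)\leq\mathbf{Q}(u)$) and requires the standard fact that each pairwise merge decreases $\mathbf{Q}$ by at least $(1-C\delta)$ times the interaction potential of the merged pair; the paper's one-merge-at-a-time recursion sidesteps the need to state this globally. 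A minor bookkeeping point: the $u$-waves absorbed into the jump of $\check u$ at $x_\beta$ are those located in the half-open block $\left]y_{\beta-1},y_\beta\right]$, so adjacent blocks should not overlap as your closed intervals suggest.
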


\begin{proof}
  Fix $\bar x \in \reali$ and prove the above inequality passing from
  $u$ to $\check u$ recursively applying three elementary operations:

  \smallskip
  \noindent\textbf{1.} $w'$ is obtained from $w$ only shifting the
  position of the points of jump but without letting any point of jump
  cross $\bar x$.  More formally, if $w = \sum_\alpha w_\alpha
  \chi_{\left[\xi_\alpha, \xi_{\alpha+1}\right[}$ with $\xi_\alpha <
  \xi_{\alpha+1}$, $w' = \sum_\alpha w_\alpha
  \chi_{\left[\xi_\alpha^\prime, \xi_{\alpha+1}^\prime\right[}$ with
  $\xi_\alpha^\prime < \xi_{\alpha+1}^\prime$ and moreover $\bar x \in
  \left[\xi_{\bar\alpha}, \xi_{\bar\alpha+1}\right[ \cap
  \left[\xi'_{\bar\alpha}, \xi'_{\bar\alpha+1} \right[$ for a suitable
  $\bar\alpha$, then
  \begin{displaymath}
    \mathbf{A}_i\left[w'\right](q,\bar x)+\kappa_2 
    \mathbf{Q}(w')= \mathbf{A}_i\left[ 
      w\right](q,\bar x)+\kappa_2 \mathbf{Q}(w)
  \end{displaymath}
  Indeed, if all the jumps stay unchanged and no shock crosses $\bar
  x$, then nothing changes in the definition of $\mathbf{A}_i$ and
  $\mathbf{Q}$.

  \smallskip
  \noindent\textbf{2.} $w'$ is obtained from $w$ removing a value
  attained by $w$ on an interval not containing $\bar x$, see
  Figure~\ref{fig:Glimm2}.  More formally, if $w = \sum_\alpha
  w_\alpha \chi_{\left[\xi_\alpha, \xi_{\alpha+1}\right[}$ with
  $\xi_\alpha < \xi_{\alpha+1}$ and $\bar x \not\in \;
  [\xi_{\bar\alpha}, \xi_{\bar\alpha+1}[$, then
  \begin{displaymath}
    w' = \sum_{\alpha\not=\bar\alpha} 
    w_\alpha \, \chi_{\left[\xi_\alpha, \xi_{\alpha+1}\right[} 
    + 
    w_{\bar\alpha-1} \, 
    \chi_{\left[\xi_{\bar\alpha}, \xi_{\bar\alpha+1}\right[}
  \end{displaymath}
  or
  \begin{displaymath}
    w' = \sum_{\alpha\not=\bar\alpha} 
    w_\alpha \, \chi_{\left[\xi_\alpha, \xi_{\alpha+1}\right[} 
    + 
    w_{\bar\alpha+1} \, 
    \chi_{\left[\xi_{\bar\alpha}, 
        \xi_{\bar\alpha+1}\right[} \,.
  \end{displaymath}
  In both cases,
  \begin{displaymath}
    \mathbf{A}_i\left[ w'\right](q,\bar x)+
    \kappa_2 \mathbf{Q}(w') 
    \leq
    \mathbf{A}_i[w](q,\bar x)+
    \kappa_2 \mathbf{Q}(w)  \,.
  \end{displaymath}
  Indeed, consider for example the situation in
  Figure~\ref{fig:Glimm2}.
  \begin{figure}
    \centering
    \begin{psfrags}
      \psfrag{t}{$t$} \psfrag{m}{$w_{\bar\alpha-1}$}
      \psfrag{b}{$w_{\bar\alpha}$} \psfrag{xa}{$\xi_{\bar\alpha}$}
      \psfrag{xb}{$\xi_{\bar\alpha+1}$} \psfrag{xh}{$\bar x$}
      \psfrag{p}{$w_{\bar\alpha+1}$} \psfrag{x}{$x$} \psfrag{u}{$w$}
      \psfrag{uc}{$w'$}
      \includegraphics[width=11cm]{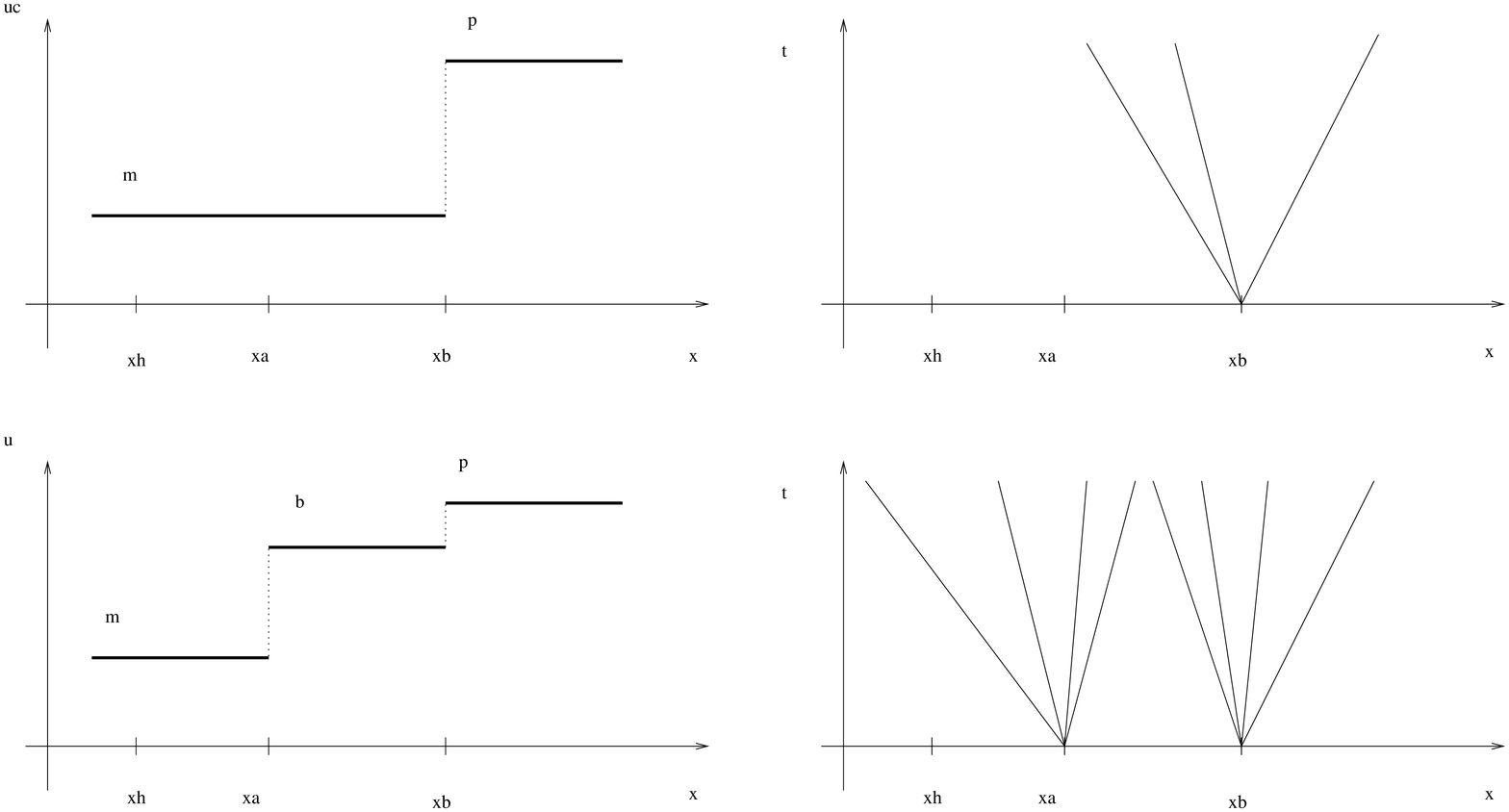}
    \end{psfrags}
    \caption{Exemplification of point 2 in the proof of
      Lemma~\ref{lemma:weights}.}
    \label{fig:Glimm2}
  \end{figure}
  The two jumps at the points $\xi_{\bar \alpha}$ and $\xi_{\bar\alpha
    +1}$ in $w$ are substituted by a single jump in $w'$ at the point
  $\xi_{\bar\alpha+1}$. The points $\xi_{\bar \alpha}$ and
  $\xi_{\bar\alpha +1}$ are both to the right of $\bar x$, therefore
  the waves in $w'$ at the point $\xi_{\bar\alpha+1}$ which appear in
  $\mathbf{A}_i[w'](q,\bar x)$ are of the same families of the waves
  in $w$ at the points $\xi_{\bar \alpha}$ and $\xi_{\bar\alpha +1}$
  which appear in $\mathbf{A}_i[w](q,\bar x)$. Since all the other
  waves in $\mathbf{A}_i$ are left unchanged we have
  \begin{displaymath}
    \mathbf{A}_i [w'] (q,\bar x) - \mathbf{A}_i [w] (q,\bar x) 
    \leq
    \sum_{j=1}^n 
    \modulo{\sigma_{\xi_{\bar\alpha+1},j}^\prime -
      \sigma_{\xi_{\bar\alpha},j} - \sigma_{\xi_{\bar\alpha+1},j}}
  \end{displaymath}
  where
  \begin{displaymath}
    \begin{array}{lclcl}
      \sigma_{\xi_{\bar\alpha+1},j}^\prime
      & = &
      E_j \left( w' \left( \xi_{\bar\alpha+1}- \right),
        w'\left(\xi_{\bar\alpha+1}+ \right) \right) 
      & = &
      E_j \left( w_{\bar\alpha-1},w_{\bar\alpha+1}\right)
      \\
      \sigma_{\xi_{\bar\alpha+1},j}
      & = &
      E_j \left( w\left(\xi_{\bar\alpha+1}- \right),
        w\left(\xi_{\bar\alpha+1}+ \right) \right)
      & = &
      E_j \left( w_{\bar\alpha},w_{\bar\alpha+1}\right)
      \\
      \sigma_{\xi_{\bar\alpha},j}
      & = &
      E_j \left( w\left(\xi_{\bar\alpha}-\right),
        w\left(\xi_{\bar\alpha}+ \right) \right)
      & = &
      E_j \left( w_{\bar\alpha-1},w_{\bar\alpha}\right) \,.
    \end{array}
  \end{displaymath}
  Therefore, the increase in $\mathbf{A}_i$ evaluated at $\bar x$ is
  bounded by the interaction potential between the waves at
  $\xi_{\bar\alpha}$ and those at $\xi_{\bar\alpha+1}$ and is
  compensated by the decrease in $\kappa_2 \mathbf{Q}$, as in the
  standard Glimm interaction estimates.

  \smallskip
  \noindent\textbf{3.} $w'$ is obtained from $w$ changing the value
  assumed by $w$ in the interval containing $\bar x$.  More formally,
  if $w = \sum_\alpha w_\alpha \chi_{\left[\xi_\alpha,
      \xi_{\alpha+1}\right[}$ with $\xi_\alpha < \xi_{\alpha+1}$ and
  $\bar x \in \left[\xi_{\bar\alpha}, \xi_{\bar\alpha+1}\right[$, then
  \begin{displaymath}
    w' = \sum_{\alpha\not=\bar\alpha} w_\alpha \chi_{\left[\xi_\alpha,
        \xi_{\alpha+1}\right[} + w_{\bar \alpha}^\prime
    \chi_{\left[\xi_{\bar\alpha}, \xi_{\bar\alpha+1}\right[} \,.
  \end{displaymath}
  In this case
  \begin{eqnarray*}
    \mathbf{A}_i\left[ w'\right](q,\bar x)+
    \kappa_2 \mathbf{Q}(w') 
    & \leq &
    \mathbf{A}_i[w](q,\bar x)+
    \kappa_2 \mathbf{Q}(w) 
    + C \cdot \norma{w_{\bar\alpha}-w_{\bar\alpha}^\prime}
    \\
    & \leq &
    \mathbf{A}_i[w](q,\bar x)+
    \kappa_2 \mathbf{Q}(w) 
    + C \cdot \norma{w(\bar x)-w^\prime(\bar x)} \,.
  \end{eqnarray*}
  This inequality follows from the Lipschitz dependence of
  $\mathbf{A}_i[w](q,\bar x)(\bar x)$ and $\mathbf{Q}(w)$ on
  $w_{\bar\alpha}$ with a Lipschitz constant independent from the
  number of jumps, see Remark~\ref{rem:Lipschitz}.

  \smallskip

  Now for $\bar x\in [x_{\bar \alpha},x_{\bar \alpha+1}[$ we can pass
  from $u$ to the function $\bar w$ defined by
  \begin{displaymath}
    \bar w 
    = 
    \sum_{\alpha\not=\bar\alpha} 
    u(y_\alpha) \chi_{\left[x_\alpha, x_{\alpha+1}\right[} 
    + 
    u(\bar x) \chi_{\left[x_{\bar\alpha}, x_{\bar\alpha+1}\right[}
  \end{displaymath}
  applying the first two steps a certain number of times. We obtain
  \begin{displaymath}
    \mathbf{A}_i [\bar w] (q,\bar x) + \kappa_2 \mathbf{Q}(\bar w) 
    \leq
    \mathbf{A}_i[u](q,\bar x) + \kappa_2 \mathbf{Q}(u)  \,.
  \end{displaymath}
  Finally with the third step we go from $\bar w$ to $\check u$
  obtaining the estimate:
  \begin{eqnarray*}
    \mathbf{A}_i [\check u] (q,\bar x) + \kappa_2 \mathbf{Q}(\check u)
    & \leq &
    \mathbf{A}_i [\bar w] (q,\bar x) + \kappa_2 \mathbf{Q}(\bar w) 
    + C \cdot \norma{\check u(\bar x)-\bar w(\bar x)}
    \\
    & \leq &
    \mathbf{A}_i [u] (q,\bar x) + \kappa_2 \mathbf{Q} (u) 
    + C \cdot \norma{\check u(\bar x)-u(\bar x)}
  \end{eqnarray*}
  which proves the lemma.
\end{proof}

  \begin{lemma}
    \label{lemma:reduce}
    Let $u,\;\check u,\;\tilde u,\; \check{\tilde
      u}\in\mathcal{D}_\delta^*$.  Assume that $\check u$ and
    $\check{\tilde u}$ are given by
    \begin{displaymath}
      \check 
      u=\sum_{\alpha=1}^Nu(y_\alpha)\chi_{[x_\alpha,x_{\alpha+1}[}
      ,\qquad
      \check{\tilde u}=
      \sum_{\alpha=1}^{\tilde 
        N}\tilde u(\tilde y_\alpha)\chi_{[\tilde x_\alpha,\tilde x_{\alpha+1}[}
    \end{displaymath}
    where $x_1<\ldots<x_{N+1}$, $y_\alpha\in [x_\alpha,x_{\alpha+1}[$,
    $\tilde x_1<\ldots<\tilde x_{\tilde N+1}$, $\tilde y_\alpha\in
    [\tilde x_\alpha,\tilde x_{\alpha+1}[$ are fixed real points.
    Then,
    \begin{displaymath}
      \mathbf{\Phi} \left(\check u,\check{\tilde u}\right)
      \leq
      \mathbf{\Phi} \left(u,\tilde u\right) 
      + 
      C\left(
        \norma{\check u-u}_{\L1} + \norma{\check{\tilde u}-\tilde
          u}_{\L1}
      \right) \,.
    \end{displaymath}
  \end{lemma}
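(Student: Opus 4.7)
The plan is to compare the integrands of $\gf(\check u,\check{\tilde u})$ and $\gf(u,\tilde u)$ pointwise in $x$, splitting the difference into a contribution coming from the change of the weights $\mathbf{W}_i$ and one coming from the change of the shock sizes $q_i$. Let $\mathbf{q}(x)$ and $\check{\mathbf{q}}(x)$ be defined by $\tilde u(x)=\mathbf{S}(\mathbf{q}(x))(u(x))$ and $\check{\tilde u}(x)=\mathbf{S}(\check{\mathbf{q}}(x))(\check u(x))$. First I would write $\gf(\check u,\check{\tilde u})-\gf(u,\tilde u)=\mathrm{I}+\mathrm{II}$ where
\begin{displaymath}
  \mathrm{I}=\sum_{i=1}^n\int_\reali |\check q_i(x)|\,\bigl(\mathbf{W}_i[\check u,\check{\tilde u}](\check q_i(x),x)-\mathbf{W}_i[u,\tilde u](\check q_i(x),x)\bigr)\,dx,
\end{displaymath}
\begin{displaymath}
  \mathrm{II}=\sum_{i=1}^n\int_\reali \bigl(|\check q_i(x)|\,\mathbf{W}_i[u,\tilde u](\check q_i(x),x)-|q_i(x)|\,\mathbf{W}_i[u,\tilde u](q_i(x),x)\bigr)\,dx,
\end{displaymath}
and then estimate each piece separately.

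For $\mathrm{I}$, the crucial observation is that the definition~\Ref{def:w} packages $\mathbf{A}_i[v]$ and $\kappa_2\mathbf{Q}(v)$ (and likewise the pair for $\tilde v$) under a common multiplicative factor $\kappa_1$; this is precisely the combination controlled by Lemma~\ref{lemma:weights}. Applying that lemma separately to the pairs $(u,\check u)$ and $(\tilde u,\check{\tilde u})$ and summing yields, for every $q\in\reali$, the pointwise bound
\begin{displaymath}
  \mathbf{W}_i[\check u,\check{\tilde u}](q,x)\leq\mathbf{W}_i[u,\tilde u](q,x)+C\bigl(\norma{\check u(x)-u(x)}+\norma{\check{\tilde u}(x)-\tilde u(x)}\bigr).
\end{displaymath}
Since $|\check q_i(x)|$ is uniformly bounded ($\check u(x)$ and $\check{\tilde u}(x)$ stay near the origin and $\mathbf{S}$ is Lipschitz there), integrating this against the $\L1$ norms bounds $\mathrm{I}$ by $C(\norma{\check u-u}_{\L1}+\norma{\check{\tilde u}-\tilde u}_{\L1})$.

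For $\mathrm{II}$, the Lipschitz invertibility of $\mathbf{S}$ near the origin gives $|\check q_i(x)-q_i(x)|\leq C(\norma{\check u(x)-u(x)}+\norma{\check{\tilde u}(x)-\tilde u(x)})$, so it suffices to prove that $q\mapsto|q|\,\mathbf{W}_i[u,\tilde u](q,x)$ is Lipschitz in $q$, uniformly in $x$. In the linearly degenerate case $\mathbf{W}_i[u,\tilde u](\cdot,x)$ has no $q$-dependence at all, while in the genuinely nonlinear case the $q$-dependence enters only through the indicators $\caratt{[0,+\infty)}(q)$ and $\caratt{(-\infty,0)}(q)$; multiplication by $|q|$ converts these into $q^+$ and $q^-$ with coefficients bounded uniformly on $\mathcal{D}_\delta^*$. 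Integrating bounds $\mathrm{II}$ by $C(\norma{\check u-u}_{\L1}+\norma{\check{\tilde u}-\tilde u}_{\L1})$, and adding to the estimate on $\mathrm{I}$ completes the proof.

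The main obstacle is the asymmetry in the definition of $\mathbf{W}_i$: the pointwise quantity $\mathbf{A}_i$ and the global quantity $\mathbf{Q}$ are not separately $\L1$-stable when $u$ is replaced by $\check u$. The entire difficulty is already absorbed by Lemma~\ref{lemma:weights}, which says that the specific linear combination $\mathbf{A}_i+\kappa_2\mathbf{Q}$ is stable; once that lemma is in hand, what remains here is essentially the bookkeeping of the pointwise-to-$\L1$ passage together with standard Lipschitz estimates for the Rankine--Hugoniot parametrization $\mathbf{S}$.
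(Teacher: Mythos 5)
Your proposal is correct and follows essentially the same route as the paper: the same add-and-subtract decomposition (merely with the roles of old/new weights in the cross term swapped), with the weight change controlled by Lemma~\ref{lemma:weights} and the change in $\mathbf{q}$ controlled by the uniform Lipschitz continuity of $q\mapsto\modulo{q}\,\mathbf{W}_i(q,x)$ and of the inverse of $\mathbf{S}$. No gaps.
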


\begin{proof}
  Introduce $\mathbf{q}(x) = \left(q_1(x), \ldots, q_n(x)\right)$ and
  $\mathbf{\check q}(x) = \left(\check q_1(x), \ldots, \check
    q_n(x)\right)$ by $\tilde u(x) = \mathbf{S}
  \left(\mathbf{q}(x)\right) \left(u(x)\right)$ and $\check{\tilde
    u}(x) = \mathbf{S} \left(\mathbf{\check q}(x)\right) \left(\check
    u(x)\right)$ with $\mathbf{S}$ defined in~\Ref{eq:S}.
  \begin{eqnarray*}
    & &
    \mathbf{\Phi} \left(\check u,\check{\tilde u}\right)
    -
    \mathbf{\Phi}\left(u,\tilde u\right)
    \\
    &=&
    \!\! \sum_{i=1}^n \!
    \int_{-\infty}^{+\infty} \!
    \left\{ 
      \modulo{\check q_i(x)}
      \mathbf{W}_i [\check u, \check{\tilde u}] 
      \left(\check q_i(x),x\right)
      -
      \modulo{q_i(x)}
      \mathbf{W}_i [\check u, \check{\tilde u}]
      \left( q_i(x),x\right)
    \right\}
    dx
    \\
    & &
    +
    \sum_{i=1}^n
    \int_{-\infty}^{+\infty} \modulo{q_i(x)}
    \left\{
      \mathbf{W}_i [\check u, \check{\tilde u}]
      \left( q_i(x),x\right)
      -
      \mathbf{W}_i [u, {\tilde u}] \left( q_i(x),x\right)
    \right\}
    \, dx \,.
  \end{eqnarray*}
  Since the map $q \mapsto q \cdot \mathbf{W}_i [\check u,
  \check{\tilde u}] (q,x)$ is uniformly Lipschitz, the first
  integral is bounded by
  \begin{displaymath}
    C \sum_{i=1}^n
    \int_{-\infty}^{+\infty} \modulo{\check q_i(x)-q_i(x)} \; dx
    \leq C 
    \left(
      \norma{\check u-u}_{\L1} + \norma{\check{\tilde u}-\tilde
        u}_{\L1}
    \right).
  \end{displaymath}
  Concerning the second integral, observe that by
  Lemma~\ref{lemma:weights}
  \begin{eqnarray*}
    & &
    \mathbf{W}_i [\check u, \check{\tilde u}]
    \left( q_i(x),x\right)
    -
    \mathbf{W}_i [u, {\tilde u}]
    \left( q_i(x),x\right)
    \\
    & &\qquad\qquad
    \leq
    \kappa_1 C 
    \left\{
      \norma{\check u(x)-u(x)}
      +
      \norma{\check{\tilde u}(x)-\tilde u(x)}
    \right\}
  \end{eqnarray*}
  and, since $\modulo{q_i(x)}$ is uniformly bounded, the Lemma is
  proved.
\end{proof}

\begin{proposition}
  \label{lemma5.1}
  The functional $\gf$, defined on $\mathcal{D}_\delta^*\times\mathcal{D}_\delta^*$, is lower
  semicontinuous with respect to the $\L1$ norm.
\end{proposition}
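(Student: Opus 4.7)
The plan is to adapt the lower semicontinuity argument from Lemma~\ref{lem:Qlsc}, working now with pairs of functions and using Lemma~\ref{lemma:reduce} as the basic estimate together with the continuity statement of Remark~\ref{rem:ContPhi}. Given sequences $u_\nu, \tilde u_\nu \in \mathcal{D}_\delta^*$ converging in $\L1$ to $u, \tilde u \in \mathcal{D}_\delta^*$, the goal is $\gf(u, \tilde u) \leq \liminf_\nu \gf(u_\nu, \tilde u_\nu)$. First, I pass to a subsequence so that $\gf(u_\nu, \tilde u_\nu)$ converges to the $\liminf$ and both $u_\nu \to u$ and $\tilde u_\nu \to \tilde u$ hold almost everywhere.

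Next, I write $u = \sum_{\alpha=1}^N u_\alpha \chi_{[x_\alpha, x_{\alpha+1}[}$ and $\tilde u = \sum_{\alpha=1}^{\tilde N} \tilde u_\alpha \chi_{[\tilde x_\alpha, \tilde x_{\alpha+1}[}$, and exploit the pointwise convergences to select sample points $y_\alpha \in {]x_\alpha, x_{\alpha+1}[}$ and $\tilde y_\alpha \in {]\tilde x_\alpha, \tilde x_{\alpha+1}[}$ along which $u_\nu(y_\alpha) \to u_\alpha$ and $\tilde u_\nu(\tilde y_\alpha) \to \tilde u_\alpha$. These sample points define the ``reduced'' piecewise constant approximations
\[
\check u_\nu = \sum_{\alpha=1}^N u_\nu(y_\alpha)\, \chi_{[x_\alpha, x_{\alpha+1}[}, \qquad \check{\tilde u}_\nu = \sum_{\alpha=1}^{\tilde N} \tilde u_\nu(\tilde y_\alpha)\, \chi_{[\tilde x_\alpha, \tilde x_{\alpha+1}[}.
\]
Their value-sequences are subsequences of those of $u_\nu$ and $\tilde u_\nu$ respectively, so Proposition~\ref{prop:reduce} gives $\mathbf{\Upsilon}(\check u_\nu) \leq \mathbf{\Upsilon}(u_\nu) < \delta$ and similarly for $\check{\tilde u}_\nu$; hence both belong to $\mathcal{D}_\delta^*$ and Lemma~\ref{lemma:reduce} is applicable.

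Then Lemma~\ref{lemma:reduce} yields
\[
\gf(\check u_\nu, \check{\tilde u}_\nu) \leq \gf(u_\nu, \tilde u_\nu) + C \left( \norma{\check u_\nu - u_\nu}_{\L1} + \norma{\check{\tilde u}_\nu - \tilde u_\nu}_{\L1} \right).
\]
Since the partitions $\{x_\alpha\}$ and $\{\tilde x_\alpha\}$ are fixed while the coefficients converge, $\check u_\nu \to u$ and $\check{\tilde u}_\nu \to \tilde u$ in $\L1$; combined with the hypothesis $u_\nu \to u$ and $\tilde u_\nu \to \tilde u$, the triangle inequality forces both error terms to vanish. Finally, Remark~\ref{rem:ContPhi}, applied with those fixed partitions, gives $\gf(\check u_\nu, \check{\tilde u}_\nu) \to \gf(u, \tilde u)$, and the chain of inequalities delivers the conclusion.

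The main delicate point is reconciling three simultaneous requirements on the approximations $\check u_\nu, \check{\tilde u}_\nu$: they must lie in $\mathcal{D}_\delta^*$ so that Lemma~\ref{lemma:reduce} is applicable, they must remain close to $u_\nu, \tilde u_\nu$ in $\L1$ so that the error term in that lemma is infinitesimal, and they must be piecewise constant on the \emph{same} fixed partitions as $u$ and $\tilde u$ so that Remark~\ref{rem:ContPhi} provides the final continuity. Fixing the partitions according to the limit functions $u,\tilde u$ and sampling coefficient values from the converging sequences through $y_\alpha$ and $\tilde y_\alpha$ reconciles all three at once.
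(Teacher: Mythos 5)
Your proposal is correct and follows essentially the same route as the paper: pass to an a.e.\ convergent subsequence, build $\check u_\nu,\check{\tilde u}_\nu$ on the fixed partitions of $u,\tilde u$ by sampling at points of pointwise convergence, then combine Lemma~\ref{lemma:reduce} with Remark~\ref{rem:ContPhi}. The only (harmless) cosmetic difference is that the paper uses a single common partition given by the union of the jump points of $u$ and $\tilde u$, whereas you keep two separate partitions; your explicit check via Proposition~\ref{prop:reduce} that $\check u_\nu,\check{\tilde u}_\nu\in\mathcal{D}_\delta^*$ is a welcome detail the paper leaves implicit.
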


\begin{proof}
  Fix $u,\tilde u$ in $\mathcal{D}_\delta^*$. Choose two sequences of
  piecewise constant maps $u_\nu,\tilde u_\nu$ in
  $\mathcal{D}_\delta^*$ converging to $u,\tilde u$ in $\L1$. We want
  to show that $\mathbf{\Phi}(u,\tilde u) \leq \liminf_{\nu\to
    +\infty} \mathbf{\Phi} (u_\nu, \tilde u_\nu)$. Call
  $l=\liminf_{\nu\to +\infty} \mathbf{\Phi} (u_\nu, \tilde u_\nu)$ and
  note that, up to subsequences, we may assume that $\lim_{\nu\to
    +\infty} \mathbf{\Phi} (u_\nu, \tilde u_\nu) = l$. By possibly
  selecting a further subsequence, we may also assume that both
  $u_\nu$ and $\tilde u_\nu$ converge a.e.~to $u$ and $\tilde u$.

  Let $\{x_1,\ldots,x_{N+1}\}$ be the set of the jump points in $u$
  and $\tilde u$ and write
  \begin{displaymath}
    u = \sum_{\alpha=1}^{N} u_\alpha \, \chi_{\left[x_\alpha,
        x_{\alpha+1}\right[},\qquad \tilde u = \sum_{\alpha=1}^{N}
    \tilde u_\alpha \, \chi_{\left[x_\alpha, x_{\alpha+1}\right[} \,.
  \end{displaymath}
  For all $\alpha$, select $y_\alpha \in \left]x_{\alpha},
    x_{\alpha+1}\right[$ so that as $\nu \to +\infty$, the sequence
  $u_\nu(y_\alpha)$ converges to $u(y_\alpha)=u_\alpha$ and $\tilde
  u_\nu(y_\alpha)$ converges to $\tilde u(y_\alpha) = \tilde
  u_\alpha$.  Introduce the piecewise constant function $\check u_\nu
  = \sum_\alpha u_\nu (y_\alpha)\, \chi_{\left[x_\alpha,
      x_{\alpha+1}\right[}$. Let $\check{\tilde u}_\nu$ be defined
  analogously. Observe that $\check u_\nu$ and $\check{\tilde u}_\nu$
  converge pointwise and in $\L1$ to respectively $u$ and $\tilde u$.
  Remark~\ref{rem:ContPhi} implies $\lim_{\nu\rightarrow
    +\infty}\mathbf{\Phi}\left(\check u_\nu,\check{\tilde
      u}_\nu\right)=\mathbf{\Phi}\left( u,\tilde u\right)$, while
  Lemma~\ref{lemma:reduce} implies
  \begin{displaymath}
    \mathbf{\Phi} (\check u_\nu,\check{\tilde u}_\nu)
    \leq 
    \mathbf{\Phi}\left( u_\nu,\tilde u_\nu\right)
    + C \left(\norma{\check u_\nu-u_\nu}_{\L1}
      + \norma{\check{\tilde u}_\nu - \tilde u_\nu}_{\L1}\right) \,.
  \end{displaymath}
  Therefore, passing to the limit $\nu \to +\infty$, the proof is
  completed:
  \begin{eqnarray*}
    \mathbf{\Phi} (u,\tilde u)
    &=&
    \mathbf{\Phi} ( u,\tilde u)
    -
    \mathbf{\Phi} (\check u_\nu,\check{\tilde u}_\nu)
    +
    \mathbf{\Phi} (\check u_\nu,\check{\tilde u}_\nu)
    -
    \mathbf{\Phi} (u_\nu,\tilde u_\nu)
    +
    \mathbf{\Phi} (u_\nu,\tilde u_\nu)
    \\
    & \leq &
    \mathbf{\Phi} (u,\tilde u)
    -
    \mathbf{\Phi} (\check u_\nu,\check{\tilde u}_\nu)
    +
    C\left(
      \norma{\check u_\nu-u_\nu}_{\L1}
      +
      \norma{\check{\tilde u}_\nu-\tilde u_\nu}_{\L1}
    \right)
    \\
    & &
    +\mathbf{\Phi} (u_\nu,\tilde u_\nu) \,.
    \vspace{-2\baselineskip}
  \end{eqnarray*}
  \vspace{-\baselineskip}
\end{proof}

\begin{lemma}
  \label{lem:prima}
  For all $u,\tilde u \in \mathcal{D}_\delta^*$, one has
  $\mathbf{\Xi}(u,\tilde u) \geq \gf(u,\tilde u)$.
\end{lemma}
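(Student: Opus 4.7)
The plan is to combine the definition of $\mathbf{\Xi}$ as an infimum with the lower semicontinuity of $\gf$ on $\mathcal{D}_\delta^*\times\mathcal{D}_\delta^*$ just established in Proposition~\ref{lemma5.1}. Since $u,\tilde u \in \mathcal{D}_\delta^*$, the functional $\gf(u,\tilde u)$ is well defined, and the inequality to be proved is essentially the statement that the infimum in the definition of $\mathbf{\Xi}_\eta$ cannot drop below $\gf(u,\tilde u)$ in the limit $\eta \to 0+$, which is exactly what lower semicontinuity provides.

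More concretely, I would proceed as follows. For each $\eta>0$, use the definition of the infimum to pick $v_\eta \in B_\eta(u)$ and $\tilde v_\eta \in B_\eta(\tilde u)$ (both are nonempty, and both $v_\eta,\tilde v_\eta$ lie in $\mathcal{D}_\delta^*$ by the very definition \Ref{eq:Beta} of $B_\eta$) such that
\begin{displaymath}
  \gf(v_\eta,\tilde v_\eta) \leq \mathbf{\Xi}_\eta(u,\tilde u) + \eta.
\end{displaymath}
By construction $\|v_\eta - u\|_{\L1} < \eta$ and $\|\tilde v_\eta - \tilde u\|_{\L1} < \eta$, so taking any sequence $\eta_\nu \to 0+$ produces piecewise constant approximants $v_{\eta_\nu} \to u$, $\tilde v_{\eta_\nu} \to \tilde u$ in $\L1$ with both $u,\tilde u$ and $v_{\eta_\nu},\tilde v_{\eta_\nu}$ in $\mathcal{D}_\delta^*$.

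Now apply Proposition~\ref{lemma5.1} to the sequences $v_{\eta_\nu},\tilde v_{\eta_\nu}$: by $\L1$ lower semicontinuity of $\gf$,
\begin{displaymath}
  \gf(u,\tilde u)
  \leq
  \liminf_{\nu\to+\infty} \gf(v_{\eta_\nu},\tilde v_{\eta_\nu})
  \leq
  \liminf_{\nu\to+\infty} \bigl(\mathbf{\Xi}_{\eta_\nu}(u,\tilde u) + \eta_\nu\bigr)
  =
  \mathbf{\Xi}(u,\tilde u),
\end{displaymath}
where the last equality uses \Ref{eq:Xi}. This yields the claimed inequality. The argument has no hard step: the nontrivial content is entirely packaged into Proposition~\ref{lemma5.1}, whose proof in turn rested on Lemma~\ref{lemma:reduce} (to compare $\gf$ of approximating constants with $\gf$ of the targets). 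Combined with Lemma~\ref{lem:inequality}, this will give $\mathbf{\Xi} = \gf$ on $\mathcal{D}_\delta^*\times\mathcal{D}_\delta^*$, as expected.
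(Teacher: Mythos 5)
Your proof is correct and follows essentially the same route as the paper: both arguments extract piecewise constant sequences converging to $u,\tilde u$ whose $\gf$-values realize $\mathbf{\Xi}(u,\tilde u)$, and then invoke the lower semicontinuity of $\gf$ on $\mathcal{D}_\delta^*\times\mathcal{D}_\delta^*$ from Proposition~\ref{lemma5.1}. Your version merely makes explicit the selection of near-minimizers $v_\eta,\tilde v_\eta$ that the paper states as the existence of such sequences.
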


\begin{proof}
  By the definition~\Ref{eq:Xi} of $\mathbf{\Xi}$, for all $u, \tilde
  u \in \mathcal{D}_\delta^*$, there exist sequences $v_\nu, \tilde
  v_\nu$ of piecewise constant functions such that for $\nu \to
  +\infty$ we have $v_\nu \to u$, $\tilde v_\nu \to u$ in $\L1$ and
  $\gf(v_\nu,\tilde v_\nu) \to \mathbf{\Xi}(u,\tilde u)$. Hence, by
  Proposition~\ref{lemma5.1}
  \begin{displaymath}
    \gf(u, \tilde u) 
    \leq 
    \liminf_{\nu\to+\infty} \gf(v_\nu, \tilde v_\nu) 
    =
    \mathbf{\Xi} (u, \tilde u) \,,
  \end{displaymath}
  completing the proof.
\end{proof}

Lemma~\ref{lem:inequality} and Lemma~\ref{lem:prima} together yield
the following proposition.

\begin{proposition}
  \label{prop:Equal}
  For all $u$, $\tilde u$ in $\mathcal{D}_\delta^*$, one has
  $\mathbf{\Xi}(u,\tilde u) = \gf(u, \tilde u)$.
\end{proposition}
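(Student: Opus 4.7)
The statement is essentially immediate once the two preceding lemmas are in place, so the plan is to combine them and record what each contributes. For the inequality $\mathbf{\Xi}(u,\tilde u) \leq \gf(u,\tilde u)$, the observation of Lemma~\ref{lem:inequality} is enough: since $u \in B_\eta(u)$ and $\tilde u \in B_\eta(\tilde u)$ for every $\eta>0$, the pair $(u,\tilde u)$ is itself admissible in the infimum defining $\mathbf{\Xi}_\eta$, giving $\mathbf{\Xi}_\eta(u,\tilde u) \leq \gf(u,\tilde u)$, and this bound is preserved when we let $\eta \to 0^+$ in~\Ref{eq:Xi}.

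For the reverse inequality $\mathbf{\Xi}(u,\tilde u) \geq \gf(u,\tilde u)$ (Lemma~\ref{lem:prima}), the idea is to feed the lower semicontinuity of $\gf$ on piecewise constant functions into the very definition of $\mathbf{\Xi}$. Pick $\eta_\nu \to 0^+$ and piecewise constant $v_\nu \in B_{\eta_\nu}(u)$, $\tilde v_\nu \in B_{\eta_\nu}(\tilde u)$ with $\gf(v_\nu,\tilde v_\nu) \to \mathbf{\Xi}(u,\tilde u)$; then $v_\nu \to u$ and $\tilde v_\nu \to \tilde u$ in $\L1$, and Proposition~\ref{lemma5.1} yields $\gf(u,\tilde u) \leq \liminf_{\nu} \gf(v_\nu,\tilde v_\nu) = \mathbf{\Xi}(u,\tilde u)$.

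Combining the two bounds yields the equality. The main technical obstacle is therefore not in this proof at all but hidden upstream in Proposition~\ref{lemma5.1}, whose argument rests on the comparison estimate of Lemma~\ref{lemma:reduce} (controlling $\gf(\check u_\nu,\check{\tilde u}_\nu)$ by $\gf(u_\nu,\tilde u_\nu)$ up to an $\L1$ error via Lemma~\ref{lemma:weights}) together with the continuity observation of Remark~\ref{rem:ContPhi}. With those tools, no further calculation is needed to conclude Proposition~\ref{prop:Equal}.
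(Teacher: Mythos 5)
Your proposal is correct and follows exactly the paper's route: the paper obtains Proposition~\ref{prop:Equal} by simply combining Lemma~\ref{lem:inequality} (the upper bound from $u\in B_\eta(u)$, $\tilde u\in B_\eta(\tilde u)$) with Lemma~\ref{lem:prima} (the lower bound via the lower semicontinuity of $\gf$ from Proposition~\ref{lemma5.1}). Your remark that the real technical content sits upstream in Proposition~\ref{lemma5.1} and Lemma~\ref{lemma:reduce} is also an accurate reading of the paper's structure.
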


Thanks to the definition~\Ref{eq:Xi} of $\mathbf{\Xi}$, we obtain

\begin{proposition}
  \label{prop:lsc}
  The functional $\mathbf{\Xi} \colon
  \mathcal{D}_\delta\times\mathcal{D}_\delta\mapsto \reali$ is lower
  semicontinuous with respect to the $\L1$ norm.
\end{proposition}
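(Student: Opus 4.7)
The plan is to imitate the argument already used for $\bar{\mathbf{\Upsilon}}$ in Proposition~\ref{prop:Qlsc}. That argument worked because the extended functional was defined as the $\sup$ (equivalently, limit as $\eta \to 0+$) of a family of quantities obtained by infimizing on balls that shrink around the argument, and a simple triangle inequality lets one transfer near-minimizers on $B_\eta(u_\nu)$ to near-minimizers on $B_{2\eta}(u)$. The definition \Ref{eq:Xi} of $\mathbf{\Xi}$ has exactly the same structure, with two balls instead of one.

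In detail, fix $u,\tilde u\in\mathcal{D}_\delta$ and take sequences $u_\nu \to u$ and $\tilde u_\nu \to \tilde u$ in $\L1$. Set
\begin{displaymath}
  \epsilon_\nu
  =
  \norma{u_\nu - u}_{\L1} + \norma{\tilde u_\nu - \tilde u}_{\L1} + 1/\nu \,,
\end{displaymath}
so that $\epsilon_\nu \to 0$. By the definition of $\mathbf{\Xi}_{\epsilon_\nu}$, choose $v_\nu \in B_{\epsilon_\nu}(u_\nu)$ and $\tilde v_\nu \in B_{\epsilon_\nu}(\tilde u_\nu)$ such that
\begin{displaymath}
  \mathbf{\Phi}(v_\nu,\tilde v_\nu)
  \leq
  \mathbf{\Xi}_{\epsilon_\nu}(u_\nu,\tilde u_\nu) + \epsilon_\nu
  \leq
  \mathbf{\Xi}(u_\nu,\tilde u_\nu) + \epsilon_\nu \,,
\end{displaymath}
where the last inequality uses the fact that $\mathbf{\Xi}_\eta \leq \mathbf{\Xi}$ for every $\eta >0$, which is immediate from \Ref{eq:Xi}.

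The triangle inequality gives $\norma{v_\nu - u}_{\L1} \leq \norma{v_\nu - u_\nu}_{\L1} + \norma{u_\nu - u}_{\L1} < 2\epsilon_\nu$ and similarly for $\tilde v_\nu$, so $v_\nu \in B_{2\epsilon_\nu}(u)$ and $\tilde v_\nu \in B_{2\epsilon_\nu}(\tilde u)$. Therefore
\begin{displaymath}
  \mathbf{\Xi}_{2\epsilon_\nu}(u,\tilde u)
  \leq
  \mathbf{\Phi}(v_\nu,\tilde v_\nu)
  \leq
  \mathbf{\Xi}(u_\nu,\tilde u_\nu) + \epsilon_\nu \,.
\end{displaymath}
Since $\epsilon_\nu \to 0+$ and $\eta \mapsto \mathbf{\Xi}_\eta(u,\tilde u)$ is non increasing with $\lim_{\eta\to 0+} \mathbf{\Xi}_\eta(u,\tilde u) = \mathbf{\Xi}(u,\tilde u)$, passing to the $\liminf$ on the right and the $\lim$ on the left yields
\begin{displaymath}
  \mathbf{\Xi}(u,\tilde u)
  =
  \lim_{\nu\to+\infty} \mathbf{\Xi}_{2\epsilon_\nu}(u,\tilde u)
  \leq
  \liminf_{\nu\to+\infty} \mathbf{\Xi}(u_\nu,\tilde u_\nu) \,,
\end{displaymath}
which is the required lower semicontinuity.

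There is no real obstacle here: the argument is essentially a formal manipulation of the definition \Ref{eq:Xi}, and all the substantive work — in particular the lower semicontinuity of $\mathbf{\Phi}$ on $\mathcal{D}_\delta^*\times\mathcal{D}_\delta^*$ (Proposition~\ref{lemma5.1}) and the reduction lemmas — is not even invoked in this step; it is already hidden inside the construction of $\mathbf{\Xi}$. The only thing to keep straight is the doubling of the radius when passing from balls around $(u_\nu,\tilde u_\nu)$ to balls around $(u,\tilde u)$, and the monotonicity of $\mathbf{\Xi}_\eta$ in $\eta$ that makes the passage to the limit harmless.
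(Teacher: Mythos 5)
Your proof is correct and follows essentially the same route as the paper's: the same choice of $\epsilon_\nu$, the same selection of near-infimizers $v_\nu\in B_{\epsilon_\nu}(u_\nu)$, $\tilde v_\nu\in B_{\epsilon_\nu}(\tilde u_\nu)$, the same triangle-inequality transfer to $B_{2\epsilon_\nu}(u)$ and $B_{2\epsilon_\nu}(\tilde u)$, and the same passage to the limit using the monotonicity of $\eta\mapsto\mathbf{\Xi}_\eta$. Your closing observation that Proposition~\ref{lemma5.1} is not needed at this step is also accurate.
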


\begin{proof}
  Fix $u$ and $\tilde u$ in $\mathcal{D}_\delta$.  Let $u_\nu$,
  respectively $\tilde u_\nu$, be a sequence in $\mathcal{D}_\delta$
  converging to $u$, respectively $\tilde u$.  Define $\epsilon_\nu =
  \norma{u_\nu - u}_{\L1} + \norma{\tilde u_\nu - \tilde u}_{\L1} +
  1/\nu$. Then, for each $\nu$, there exist piecewise constant $v_\nu
  \in B_{\epsilon_\nu} (u_\nu)$, respectively $\tilde v_\nu \in
  B_{\epsilon_\nu} (\tilde u_\nu)$, such that
  \begin{equation}
    \label{eq:inequality}
    \gf( v_\nu, \tilde v_\nu) 
    \leq 
    \mathbf{\Xi}_{\epsilon_\nu} (u_\nu, \tilde u_\nu) + \epsilon_\nu
    \leq \mathbf{\Xi}(u_\nu, \tilde u_\nu)+\epsilon_\nu \,.
  \end{equation}
  Moreover
  \begin{displaymath}
    \begin{array}{rcccl}
      \norma{v_\nu - u}_{\L1}
      & \leq &
      \norma{v_\nu - u_\nu}_{\L1} +
      \norma{u_\nu - u}_{\L1}
      & < &
      2 \epsilon_\nu
      \\
      \norma{\tilde v_\nu - \tilde u}_{\L1}
      & \leq &
      \norma{\tilde v_\nu - \tilde u_\nu}_{\L1} +
      \norma{\tilde u_\nu - \tilde u}_{\L1}
      & < &
      2 \epsilon_\nu
    \end{array}
  \end{displaymath}
  so that $v_\nu \in B_{2\epsilon_\nu} (u)$ and $\tilde v_\nu \in
  B_{2\epsilon_\nu} (\tilde u)$. Hence, $\mathbf{\Xi}_{2\epsilon_\nu}
  (u, \tilde u) \leq \gf(v_\nu, \tilde v_\nu)$.
  Using~\Ref{eq:inequality}, we obtain $\mathbf{\Xi}_{2\epsilon_\nu}
  (u, \tilde u) \leq \mathbf{\Xi}(u_\nu, \tilde u_\nu)+\epsilon_\nu$.
  Finally, passing to the lower limit for $\nu \to +\infty$, we have
  $\mathbf{\Xi}(u,\tilde u) \leq \liminf_{\nu\to+\infty} \mathbf{\Xi}
  (u_\nu,\tilde u_\nu)$.
\end{proof}

In the next proposition, we compare the functional $\gf$ defined
in~\Ref{eq:Phi} with the stability functional $\Phi^\epsilon$ as
defined in~\cite[formula~(8.6)]{BressanLectureNotes}

\begin{proposition}
  \label{lem:diet}
  Let $\delta>0$. Then, there exists a positive $C$ such that for all
  $\epsilon > 0$ sufficiently small and for all $\epsilon$-approximate
  front tracking solutions $w(t,x), \tilde w (t,x)$ of~\Ref{eq:HCL}
  \begin{displaymath}
    \modulo{
      \gf  \left( w(t, \cdot), \tilde w(t, \cdot) \right) - 
      \Phi^\epsilon ( w, \tilde w) (t)
    }
    \leq
    C \cdot \epsilon \cdot \norma{w(t,\cdot) - \tilde w(t,\cdot)}_{\L1}\,.
  \end{displaymath}
\end{proposition}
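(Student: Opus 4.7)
The plan is to exploit the fact that $\gf$ and $\Phi^\epsilon$ share the same outer structure: both can be written as $\sum_{i=1}^n \int \modulo{q_i(x)}\cdot W_i(q_i(x),x)\, dx$, where $\mathbf{q}(x)$ is determined by $\tilde w(t,x)=\mathbf{S}(\mathbf{q}(x))(w(t,x))$. Crucially, the function $\mathbf{q}$ is the same for the two functionals, since the definition~\Ref{eq:S} of $\mathbf{S}$ does not depend on whether jumps are classified as physical or non-physical. Therefore the difference $\gf(w(t,\cdot),\tilde w(t,\cdot))-\Phi^\epsilon(w,\tilde w)(t)$ reduces to an integral of $\modulo{q_i(x)}$ against the pointwise difference of the weights.

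Next, I would establish the pointwise estimate
$$\modulo{\mathbf{W}_i[w(t,\cdot),\tilde w(t,\cdot)](q,x)-W_i^\epsilon(q,x)}\leq C\,\epsilon$$
uniformly in $q$ and $x$. The two weight definitions differ only in the treatment of non-physical jumps: in $\Phi^\epsilon$ they are excluded from the $A_i$'s and treated as a fictitious $(n+1)$-th family in $Q$, while in $\gf$ they are decomposed into $n$ physical waves by the Riemann solver at each non-physical jump point. Since the Riemann solver is Lipschitz, the sum of the strengths $\modulo{\sigma_{x,j}}$ produced by decomposing a non-physical jump is controlled by the strength of that jump; summing over all such jumps and using the classical bound that, in an $\epsilon$-approximate front tracking solution, the total strength of non-physical waves is at most $\epsilon$ (\cite[Chapter~7]{BressanLectureNotes}), the $\mathbf{A}_i$ differences and the $\mathbf{Q}$ difference are each bounded by $C\epsilon$.

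Putting the two steps together yields
$$\modulo{\gf(w(t,\cdot),\tilde w(t,\cdot))-\Phi^\epsilon(w,\tilde w)(t)}\leq C\,\epsilon\sum_{i=1}^n\int_{\reali}\modulo{q_i(x)}\, dx,$$
and since $\mathbf{q}\mapsto \mathbf{S}(\mathbf{q})(u)$ is a local diffeomorphism with Lipschitz inverse near $\mathbf{q}=0$ (with Lipschitz constant uniform on the small-BV domain $\mathcal{D}_\delta$), one has $\sum_{i=1}^n\modulo{q_i(x)}\leq C\modulo{w(t,x)-\tilde w(t,x)}$, from which the claimed inequality follows upon integration in $x$.

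The main obstacle will be the careful bookkeeping in the second step, particularly in comparing the interaction potentials $\mathbf{Q}(w)$ (as computed in $\gf$, with the Riemann decomposition applied at every non-physical jump) and its front-tracking analogue (with non-physical waves grouped monolithically in the $(n+1)$-th family). One must verify that, for each pair consisting of one non-physical jump and one physical wave, the contribution to the two potentials differs by at most a constant times the product of the non-physical strength and the physical strength, so that the overall difference is bounded by $C\epsilon\cdot\mathbf{V}(w)\leq C'\epsilon$ on $\mathcal{D}_\delta^*$.
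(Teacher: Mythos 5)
Your proposal is correct and follows essentially the same route as the paper: split the difference into the weight discrepancy integrated against $\modulo{q_i(x)}$, bound $\modulo{\mathbf{W}_i - W_i}$ by $C\epsilon$ using the fact that physical waves are counted identically while non-physical waves (total strength $\le C\epsilon$) appear only in $\mathbf{A}_i$ and contribute $O(\epsilon)$ to the difference of interaction potentials, then conclude via the equivalence $\sum_i \modulo{q_i(x)} \le C\,\norma{w(t,x)-\tilde w(t,x)}$. The only difference is that the paper dispatches the $\mathbf{Q}$--$Q$ comparison by citing the known estimate~\cite[formula~(7.100)]{BressanLectureNotes} rather than redoing the bookkeeping you outline at the end.
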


\begin{proof}
  Setting $\tilde w(t,x)=\mathbf{S}\left(\mathbf{q}(t,x)\right) \left(
    w(t,x)\right)$ and omitting the explicit time dependence in the
  integrand, we have:
  \begin{eqnarray*}
    & &
    \modulo{
      \gf  \left( w(t, \cdot), \tilde w(t, \cdot) \right) - 
      \Phi^\epsilon ( w, \tilde w) (t)
    }
    \\
    & & 
    \qquad \qquad \leq
    \int_{\reali} \sum_{i=1}^n \modulo{q_i(x)} \,
    \modulo{\mathbf{W}_i [w,\tilde w] \left(q_i(x),x\right) - W_i(x)} \, dx \,.
  \end{eqnarray*}
  We are thus lead to estimate
  \begin{eqnarray*}
    & &
    \modulo{\mathbf{W}_i[w,\tilde w] \left(q_i(x),x\right) - W_i(x)}
    \\
    & &
    \qquad\qquad
    \leq
    \kappa_1 
    \modulo{
      \mathbf{A}_i[w]        \left( q_i(x),x\right) + 
      \mathbf{A}_i[\tilde w] \left(-q_i(x),x\right) - 
      A_i(x)}
    \\
    & & 
    \qquad\qquad\qquad
    +
    \kappa_1 \kappa_2 \modulo{\mathbf{Q}(w) - Q(w)} +
    \kappa_1 \kappa_2 \modulo{\mathbf{Q}(\tilde w) - Q(\tilde w)} \,.
  \end{eqnarray*}
  The second and third terms in the right hand side are each bounded
  as in~\cite[formula~(7.100)]{BressanLectureNotes} by $C \,
  \epsilon$.  Concerning the former one, recall that, except when $q_i(x)=0$ or 
  on a finite number of points where $w$ or $\tilde w$ have jumps, $A_i$ and
  $\mathbf{A}_i$ differ only in the absence of non physical waves in
  $A_i$. In other words, physical jumps are counted in the same way in
  both $A_i$ and $\mathbf{A}_i$ while non physical waves appear in
  $\mathbf{A}_i$ but not in $A_i$. Therefore, the former term is almost 
  everywhere bounded, when $q_i(x)\not=0$,
  by the sum of the strengths of all non physical waves,
  i.e.~$C\, \epsilon$ by~\cite[formula~(7.11)]{BressanLectureNotes}.
  Finally, using~\cite[formula~(8.5)]{BressanLectureNotes}:
  \begin{equation}
    \label{eq:equivalence}
    \frac{1}{C} \cdot \norma{w(t,x) -\tilde w(t,x)}
    \leq
    \sum_{i=1}^n \modulo{q_i(t,x)}
    \leq
    C \cdot \norma{w(t,x) -\tilde w(t,x)}
  \end{equation}
  we complete the proof with the following estimate:
  \begin{displaymath}
    \modulo{
      \gf  \left( w(t, \cdot), \tilde w(t, \cdot) \right) - 
      \Phi^\epsilon ( w, \tilde w) (t)
    }
    \leq
    C\, \epsilon \int_{\reali} \sum_{i=1}^n \modulo{q_i(x)}\, dx
    \leq
    C\, \epsilon \norma{w-\tilde w}_{\L1}.
    \vspace{-\baselineskip}
  \end{displaymath}
\end{proof}

\begin{proofof}{Theorem~\ref{thm:main}}
  The estimates~\cite[formula~(8.5)]{BressanLectureNotes} 
  show that $\gf$ is equivalent to
  the $\L1$ distance between functions in $\mathcal{D}_\delta^*$.
  Indeed, if $\delta$ is sufficiently small, then $\mathbf{W}_i \in
  [1,2]$ for all $i = 1, \ldots, n$ and all $x \in \reali$, so that
  \begin{equation}
    \label{eq:Phi1}
    \frac{1}{C} \cdot \norma{v -\tilde v}_{\L1}
    \leq
    \gf(v,\tilde v)
    \leq
    2 C \cdot \norma{v -\tilde v}_{\L1} \,.
  \end{equation}
  To prove~\emph{(i)}, fix $u, \tilde u \in \mathcal{D}_\delta$ and
  choose $v \in B_\eta(u)$, $\tilde v \in B_\eta(\tilde u)$.
  By~\Ref{eq:Phi1},
  \begin{displaymath}
    \begin{array}{rcccl}
      \displaystyle
      \frac{1}{C} \cdot \left( \norma{u -\tilde u}_{\L1} -2\eta \right)
      & \leq &
      \gf(v,\tilde v)
      & \leq &
      \displaystyle
      2 C \cdot \left( \norma{u -\tilde u}_{\L1} +2\eta \right)
      \\[5pt]
      \displaystyle
      \frac{1}{C} \cdot \left( \norma{u -\tilde u}_{\L1} -2\eta \right)
      & \leq &
      \mathbf{\Xi}_\eta(u,\tilde u)
      & \leq &
      \displaystyle
      2 C \cdot \left( \norma{u -\tilde u}_{\L1} +2\eta \right) \,.
    \end{array}
  \end{displaymath}
  The proof of~\emph{(i)} is completed passing to the limit $\eta \to
  0+$.

  \smallskip

  To prove~\emph{(ii)}, fix $u,\tilde u \in \mathcal{D}_\delta$ and
  $\eta >0$.  Correspondingly, choose $v_\eta \in B_{\eta}(u)$ and
  $\tilde v_\eta \in B_{\eta}(\tilde u)$ satisfying
  \begin{equation}
    \label{eq:Eta}
    \mathbf{\Xi} (u, \tilde u) 
    \geq
    \mathbf{\Xi}_\eta (u, \tilde u) \geq \Phi( v_\eta, \tilde v_\eta) -\eta \,.
  \end{equation}

  Let now $\epsilon >0$ and introduce the $\epsilon$-approximate
  solutions $v^\epsilon_\eta$ and $\tilde v^\epsilon_\eta$ with
  initial data $v^\epsilon_\eta(0,\cdot) = v_\eta$ and $\tilde
  v^\epsilon_\eta(0,\cdot) = \tilde v_\eta$. Note that for $\epsilon$
  sufficiently small
  \begin{displaymath}
    \begin{array}{rcccccl}
      \displaystyle
      \mathbf{\Upsilon} \left( v^\epsilon_\eta(t)\right)
      & \leq &
      \displaystyle
      \Upsilon^\epsilon \left( v^\epsilon_\eta\right)(t) + C \epsilon
      & \leq &
      \Upsilon^\epsilon(v^\epsilon_\eta)(0) + C \epsilon
      \\
      & \leq &
      \displaystyle
      \mathbf{\Upsilon} (v_\eta) + C \epsilon
      & < &
      \displaystyle
      \delta
    \end{array}
  \end{displaymath}
  and an analogous inequality holds for $\tilde v^\epsilon_\eta$.
  Therefore $v_\eta^\epsilon(t),\,\tilde v_\eta^\epsilon(t) \in
  \mathcal{D}_\delta^*$. Here we denoted with $\Upsilon^\epsilon$ the
  sum $V+C_0 Q$ defined on $\epsilon$--approximate wave front tracking
  solutions (see~\cite[formul\ae~(7.53),
  (7.54)]{BressanLectureNotes}).  We may thus apply
  Proposition~\ref{prop:Equal}, Proposition~\ref{lem:diet} and the
  main result in~\cite[Chapter 8]{BressanLectureNotes}, that
  is~\cite[Theorem 8.2]{BressanLectureNotes}, to obtain
  \begin{eqnarray*}
    \mathbf{\Xi} 
    \bigl( v^\epsilon_\eta(t), \tilde v^\epsilon_\eta(t) \bigr)
    \!\!\!\!
    & = &
    \mathbf{\Phi} \left( v^\epsilon_\eta(t), \tilde v^\epsilon_\eta(t) \right)
    \\
    & \leq &
    \Phi^\epsilon \left( v^\epsilon_\eta, \tilde v^\epsilon_\eta \right)(t)
    + 
    C \epsilon \norma{v^\epsilon_\eta(t) - \tilde v^\epsilon_\eta(t)}_{\L1}
    \\
    & \leq &
    \Phi^\epsilon \left( v^\epsilon_\eta, \tilde v^\epsilon_\eta \right)(0)
    + 
    C \epsilon t
    +
    C \epsilon \norma{v^\epsilon_\eta(t) - \tilde v^\epsilon_\eta(t)}_{\L1}
    \\
    & \leq &
    \mathbf{\Phi} \left( v_\eta, \tilde v_\eta \right)
    \! + 
    C \epsilon t
    +
    C \epsilon \norma{v^\epsilon_\eta(t) - \tilde v^\epsilon_\eta(t)}_{\L1}
    \!\!\! +
    C \epsilon \norma{v_\eta - \tilde v_\eta}_{\L1}.
  \end{eqnarray*}
  Recall that as $\epsilon \to 0$
  by~\cite[Theorem~8.1]{BressanLectureNotes} $v^\epsilon_\eta(t) \to
  S_t v_\eta$ and $\tilde v^\epsilon_\eta(t) \to S_t \tilde v_\eta$.
  Hence, Proposition~\ref{prop:lsc} and~\Ref{eq:Eta} ensure that
  \begin{displaymath}
    \mathbf{\Xi} (S_t v_\eta,S_t \tilde v_\eta)
    \leq
    \liminf_{\epsilon \to 0+} 
    \mathbf{\Xi} \left( v^\epsilon_\eta(t), \tilde v^\epsilon_\eta (t) \right)
    \leq
    \mathbf{\Phi} \left( v_\eta, \tilde v_\eta \right)
    \leq
    \mathbf{\Xi}(u,\tilde u) +\eta \,.
  \end{displaymath}
  By the choice of $v_\eta$ and $\tilde v_\eta$, we have that $v_\eta
  \to u$ and $\tilde v_\eta \to \tilde u$ in $\L1$ as $\eta \to 0+$.
  Therefore, using the continuity of the SRS in $\L1$ and applying
  again Proposition~\ref{prop:lsc}, we may conclude that
  \begin{displaymath}
    \mathbf{\Xi} ( S_t u , S_t \tilde u) 
    \leq
    \liminf_{\eta \to 0+}     \mathbf{\Xi} (S_t v_\eta,S_t \tilde v_\eta)
    \leq
    \mathbf{\Xi}(u,\tilde u) \,,
  \end{displaymath}
  proving~\emph{(ii)}. The latter item~\emph{(iii)} follows from
  Proposition~\ref{prop:lsc}.
\end{proofof}

\Section{Wave Measures Formulation}
\label{sec:Wave}

Let $f$ satisfy~\textbf{(F)} and $u \in \mathcal{D}_\delta$ as defined
in~\Ref{def:2.6}. Since $\tv(u)$ is bounded, by possibly changing the
values of $u$ at countably many points, we can assume that $u$ is
right continuous. Its distributional derivative $\mu$ is then a vector
measure that can be decomposed into a continuous part $\mu_c$ and an
atomic one $\mu_a$.  For $i = 1 ,\ldots, n$, consider now the wave
measure
\begin{equation}
  \label{eq:mu}
  \mu_i (B) 
  = 
  \int_B l_i(u) \, d\mu_c 
  + 
  \sum_{x \in B} E_i \left( u(x-), u(x+) \right)
\end{equation}
where $l_i(u)$ is the left $i$-th eigenvector of $Df(u)$, $E_i$ is the
$i$-th component of the map $E$ defined at~\Ref{eq:E} and $B \subseteq
\reali$ is any Borel set. Here and in what follows, we assume that
$l_1, \ldots, l_n$ are normalized so that
\begin{displaymath}
  l_i(u) \cdot r_j(u) =
  \left\{
    \begin{array}{l@{\quad\mbox{ if }}rcl}
      1 & i & = & j
      \\
      0 & i & \neq & j
    \end{array}
  \right.
\end{displaymath}
with $r_j$ as in~\Ref{eq:parametrization}. Let $\mu_i^+$, respectively
$\mu_i^-$ be the positive, respectively negative, part of $\mu_i$ and
$\modulo{\mu_i} = \mu_i^+ + \mu_i^-$ be the total variation of
$\mu_i$. Aiming at a definition of the interaction potential by means
of the wave measures, introduce the measure
\begin{equation}\label{eq:defrho}
  \rho
  =
  \sum_{1\leq j < i \leq n} \modulo{\mu_i} \otimes \modulo{\mu_j}
  +
  \sum_{i=1}^n 
  \left(\mu_i^- \otimes \mu_i^-
    +
    \mu_i^+ \otimes \mu_i^-
    +
    \mu_i^- \otimes \mu_i^+
  \right)
\end{equation}
and, as in~\cite{BaitiBressan, BressanLectureNotes, BressanColombo2},
set
\begin{eqnarray}
  \label{eq:def2Q}
  \hat \mathbf{Q} (u)
  & = &
  \rho \left( \left\{ (x,y) \in \reali^2 \colon x < y \right\} \right)
  \\ \label{eq:def2U}
  \hat \mathbf{\Upsilon} (u)
  & = &
  \sum_{i=1}^n \modulo{\mu_i} (\reali)
  +
  C_0 \cdot \hat\mathbf{Q} (u) \,.
\end{eqnarray}
For $u, \tilde u$ in $\mathcal{D}_\delta$, we now define the
functional
\begin{equation}
  \label{eq:barPhi}
  \hat\mathbf{\Xi}(u,\tilde u)
  =
  \sum_{i=1}^n \int_{-\infty}^{+\infty}
  \modulo{q_i(x)} \, \hat\mathbf{W}_i(x) \, dx
\end{equation}
where the weights $\hat\mathbf{W}_i$ are defined by
\begin{equation}
  \hat\mathbf{W}_i(x)
  =
  1 + 
  \kappa_1 \hat\mathbf{A}_i(x) + 
  \kappa_1 \kappa_2 \left( \hat\mathbf{Q} (u) + \hat\mathbf{Q}(\tilde u) \right) \,.
  \label{def:barw}
\end{equation}
Here, $\kappa_1$ and $\kappa_2$ are as
in~\cite[Chapter~8]{BressanLectureNotes}, see also~\Ref{def:w}. By
means of the wave measures $\mu_i$ and $\tilde \mu_i$ of $u$ and
$\tilde u$, if the $i$-th field is linearly degenerate, define the
weights $\hat\mathbf{A}_i$ by
\begin{eqnarray*}
  \hat\mathbf{A}_i (x)
  & = &
  \sum_{1 \leq i < j \leq n} 
  \modulo{\mu_j} \left( \left]-\infty,x \right]\right)
  +
  \modulo{\tilde \mu_j} \left( \left]-\infty,x \right]\right)
  \\
  & &
  +
  \sum_{1 \leq j < i \leq n}
  \modulo{\mu_j} \left( \left]x, +\infty \right[\right)
  +
  \modulo{\tilde \mu_j} \left( \left]x, +\infty \right[\right)
\end{eqnarray*}
whereas in the genuinely nonlinear case we let
\begin{eqnarray*}
  \hat\mathbf{A}_i (x)
  & = &
  \sum_{1 \leq i < j \leq n} 
  \modulo{\mu_j} \left( \left]-\infty,x \right]\right)
  +
  \modulo{\tilde \mu_j} \left( \left]-\infty,x \right]\right)
  \\
  & &
  +
  \sum_{1 \leq j < i \leq n}
  \modulo{\mu_j} \left( \left]x, +\infty \right[\right)
  +
  \modulo{\tilde \mu_j} \left( \left]x, +\infty \right[\right)
  \\
  & &
  +
  \left\{
    \begin{array}{l@{\ \mbox{ if }}rcl}
      \displaystyle
      \sum_{i=1}^n
      \left(
        \modulo{\mu_i} \left( \left]-\infty, x \right]\right)
        +
        \modulo{\tilde \mu_i} \left( \left]x, +\infty \right[\right)
      \right)
      &
      q_i(x) & < & 0
      \\
      \displaystyle
      \sum_{i=1}^n
      \left(
        \modulo{\mu_i} \left( \left]x, +\infty \right[\right)
        +
        \modulo{\tilde \mu_i} \left( \left]-\infty, x \right]\right)
      \right)
      &
      q_i(x) & \geq & 0 \,.
    \end{array}
  \right.
\end{eqnarray*}

\noindent On $\mathcal{D}_\delta^*$, $\hat\mathbf{Q}$ and
$\hat\mathbf{\Upsilon}$ obviously coincide respectively with
$\mathbf{Q}$ and $\mathbf{\Upsilon}$, therefore also $\mathbf{\Xi}$,
$\mathbf{\Phi}$ and $\hat\mathbf{\Xi}$ all coincide on
$\mathcal{D}_\delta^*$. Below, we prove that $\hat\mathbf{\Upsilon} =
\mathbf{\Upsilon}$, $\hat\mathbf{Q} = \mathbf{Q}$ and $\mathbf{\Xi} =
\hat\mathbf{\Xi}$ on all $\mathcal{D}_{\delta}$

The following result is a strengthened version
of~\cite[Lemma~10.1]{BressanLectureNotes}.

\begin{lemma}
  \label{lem:diam}
  There exists a positive $C$ such that for all $u \in
  \mathcal{D}_\delta$, $i = 1, \ldots, n$ and $a,b \in \reali$ with $a
  < b$
  \begin{displaymath}
    \modulo{
      E_i \left( u(a+), u(b-) \right) 
      - 
      \mu_i \left( \left]a, b \right[ \right)
    }
    \leq
    C 
    \cdot
    \diam \left( u \left( \left]a,b\right[ \right) \right)
    \cdot
    \modulo{\mu} \left( \left]a,b\right[ \right)
  \end{displaymath}
\end{lemma}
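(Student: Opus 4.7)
My plan is to prove the lemma in two stages. The first stage handles piecewise constant $u \in \mathcal{D}_\delta^*$ via Glimm-style interaction estimates for the map $E$, and the second extends to general $u \in \mathcal{D}_\delta$ by an approximation argument analogous to the one in Lemma~\ref{lemma:reduce} and Proposition~\ref{lemma5.1}.

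For $u \in \mathcal{D}_\delta^*$, let $x_1 < \ldots < x_{N-1}$ be the jumps of $u$ inside $]a,b[$, and set $v_0 = u(a+)$, $v_k = u(x_k+)$ for $1 \leq k \leq N-1$, and $v_N = u(b-)$. Because $u$ has no continuous part, the definition~\Ref{eq:mu} reduces $\mu_i(]a,b[)$ to $\sum_{k=1}^N E_i(v_{k-1}, v_k)$. The classical interaction estimate
\begin{displaymath}
  \bigl| E(v^-, v^+) - E(v^-, v^*) - E(v^*, v^+) \bigr| \leq C \, \bigl|E(v^-, v^*)\bigr| \cdot \bigl|E(v^*, v^+)\bigr|
\end{displaymath}
(see~\cite[Chapter~7]{BressanLectureNotes}) applied recursively to fuse the chain $v_0 \to v_1 \to \cdots \to v_N$ into the single transition $v_0 \to v_N$ gives
\begin{displaymath}
  \Bigl| E_i(v_0, v_N) - \sum_{k=1}^N E_i(v_{k-1},v_k) \Bigr| \leq C \sum_{k=1}^N \bigl|E(v_0, v_{k-1})\bigr| \cdot \bigl|E(v_{k-1}, v_k)\bigr|.
\end{displaymath}
Bounding $|E(v_0, v_{k-1})| \leq C \|v_0 - v_{k-1}\| \leq C \, \diam(u(]a,b[))$ together with $\sum_k |E(v_{k-1}, v_k)| \leq C \, |\mu|(]a,b[)$ yields the desired inequality in this case.

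For general $u \in \mathcal{D}_\delta$ I would approximate by piecewise constant $u_\nu \in \mathcal{D}_\delta^*$ constructed as in Lemma~\ref{lemma:reduce}: partitions are chosen to isolate (eventually, all of) the large jumps of $u$, with sampling points selected so that $u_\nu \to u$ pointwise a.e.\ and so that in particular $u_\nu(a+) \to u(a+)$ and $u_\nu(b-) \to u(b-)$. In this way $\diam(u_\nu(]a,b[))$ is controlled by $\diam(u(]a,b[))$, and both $\mu_i^{u_\nu}(]a,b[) \to \mu_i(]a,b[)$ and $|\mu^{u_\nu}|(]a,b[) \to |\mu|(]a,b[)$. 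Stage~1 applied to each $u_\nu$ then passes to the limit and gives the lemma.

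The main obstacle is the convergence $\mu_i^{u_\nu}(]a,b[) \to \mu_i(]a,b[)$. By~\Ref{eq:mu} the target measure splits into a continuous part $\int_{]a,b[} l_i(u) \, d\mu_c$ and an atomic part $\sum_y E_i(u(y-), u(y+))$, whereas the discrete $\mu_i^{u_\nu}$ is merely a sum of $E_i$ over partition points. The atomic part is matched by separating the large jumps of $u$ in the partition; the continuous part arises from the first-order expansion $E_i(v^-,v^+) = l_i(u) \cdot (v^+ - v^-) + O(\|v^+ - v^-\|^2)$ as the mesh vanishes, the quadratic residuals being absorbed by the same $\diam \cdot |\mu|$ term already produced in Stage~1. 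This matching between discrete telescoping sums and the continuous/atomic decomposition of $\mu_i$ is the technically delicate core of the argument, and is precisely the ingredient that yields the strengthened $\diam \cdot |\mu|$ bound in place of a cruder $|\mu| \cdot |\mu|$ estimate.
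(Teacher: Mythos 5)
Your Stage~1 is correct: for piecewise constant $u$ the telescoped Glimm interaction estimate does give
$\bigl|E_i(u(a+),u(b-)) - \mu_i(]a,b[)\bigr| \le C\sum_k \bigl|E(v_0,v_{k-1})\bigr|\,\bigl|E(v_{k-1},v_k)\bigr| \le C\cdot\diam\bigl(u(]a,b[)\bigr)\cdot\modulo{\mu}(]a,b[)$.
The problem is Stage~2. The convergence $\mu_i^{u_\nu}(]a,b[)\to\mu_i(]a,b[)$ that you yourself single out as ``the technically delicate core'' is not a side condition to be sketched: it is essentially the lemma itself in disguise. In the paper this convergence is the content of Lemma~\ref{lemma:approx} (formula~\Ref{eq:mu-convergence}), and it is proved there \emph{as a consequence of} Lemma~\ref{lem:diam}: the bound $\modulo{E_i(u(x_{\alpha-1}+),u(x_\alpha-)) - \mu_i(]x_{\alpha-1},x_\alpha[)}\le C\cdot\diam\cdot\modulo{\mu}$ on each mesh cell is exactly what makes the discrete sums of wave strengths converge to the continuous-plus-atomic decomposition of $\mu_i$. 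So your plan either runs in a circle or forces you to prove, on every small cell, precisely the inequality you are trying to establish on $]a,b[$. The ingredients you list for closing the gap (the first-order expansion of $E_i$, isolating the large atoms) are the right ones, but once written out they reproduce the paper's direct computation with an extra discretization layer on top, so the detour through piecewise constant functions buys nothing here.

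For the record, the paper's proof requires no approximation at all and works for arbitrary $u\in\mathcal{D}_\delta$: it compares both $E_i\left(u(a+),u(b-)\right)$ and $\mu_i(]a,b[)$ to the common quantity $l_i\left(u(a+)\right)\cdot\left(u(b-)-u(a+)\right)$. The first comparison is the Taylor-type estimate $\modulo{E_i(u,\tilde u)-l_i(u)\cdot(\tilde u-u)}\le C\modulo{u-\tilde u}^2$; the second follows from the definition~\Ref{eq:mu} by writing $\mu_i(]a,b[)$ as $l_i\left(u(a+)\right)\cdot\mu(]a,b[)$ plus three remainders (the continuous part with $l_i(u(\xi))-l_i(u(a+))$, the quadratic errors at the atoms, and the variation of $l_i$ at the atoms), each bounded by $C\cdot\diam\left(u(]a,b[)\right)\cdot\modulo{\mu}(]a,b[)$ using the Lipschitz continuity of $l_i$ and the same Taylor estimate. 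I recommend you prove the general case directly along these lines; your Stage~1 then becomes unnecessary.
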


\begin{proof}
  We use below the following estimate,
  see~\cite[p.~201]{BressanLectureNotes}, valid for all states $u,
  \tilde u$,
  \begin{equation}
    \label{eq:Taylor}
    \modulo{E_i(u,\tilde u) - l_i(u) \cdot (\tilde u - u)}
    \leq
    C \, \modulo{u-\tilde u}^2 \,.
  \end{equation}
  By the triangle inequality,
  \begin{eqnarray*}
    & &
    \modulo{
      E_i \left( u(a+), u(b-) \right) 
      - 
      \mu_i \left( \left]a, b \right[ \right)
    }
    \\
    & &
    \qquad\qquad
    \leq
    \modulo{
      E_i \left( u(a+), u(b-) \right) 
      - 
      l_i(a+) \cdot \left( u(b-) - u(a+) \right)
    }
    \\
    & &
    \qquad\qquad\qquad
    +
    \modulo{
      \mu_i \left( \left]a, b \right[ \right)
      -
      l_i(a+) \cdot \left( u(b-) - u(a+) \right)
    } \,.
  \end{eqnarray*}
  The first term in the right hand side is bounded by~\Ref{eq:Taylor}.
  By~\Ref{eq:mu}, for $I=\left] a, b \right[$.
  \begin{eqnarray*}
    \mu_i (I)
    & = &
    \int_I 
    \left( l_i \left( u(\xi) \right) - l_i \left(u(a+) \right) \right)
    \cdot 
    d\mu_c(\xi)
    \\
    & & 
    +
    \sum_{\xi \in I} 
    \left( 
      E_i \left( u(\xi-), u(\xi+) \right)
      -
      l_i \left( u(\xi-) \right)
      \cdot 
      \left( u(\xi+) - u(\xi-) \right)
    \right)
    \\
    & &
    +
    \sum_{\xi \in I}
    \left(
      l_i \left(u(\xi-) \right) - l_i \left( u(a+) \right)
    \right)
    \cdot
    \left( u(\xi+) - u(\xi-) \right)
    \\
    & &
    +
    l_i\left( u(a+) \right)
    \cdot
    \left(
      \sum_{\xi \in I} \left( u(\xi+) - u(\xi-) \right)
      +
      \int_I d\mu_c(\xi)
    \right) \,.
  \end{eqnarray*}
  We now estimates the different summands above separately. The
  Lipschitzeanity of $l_i$ ensures that the first summand above is
  bounded by
  \begin{displaymath}
    \modulo{
      \int_I
      \left( l_i \left( u(\xi) \right) - l_i \left(u(a+) \right) \right)
      \cdot 
      d\mu_c(\xi)
    }
    \leq 
    C 
    \cdot
    \diam \left( u (I) \right)
    \cdot
    \modulo{\mu} (I) \,.
  \end{displaymath}
  Passing to the second summand, using~\Ref{eq:Taylor}
  \begin{eqnarray*}
    & &
    \modulo{\sum_{\xi \in I} 
      \left( 
        E_i \left( u(\xi-), u(\xi+) \right)
        -
        l_i \left( u(\xi-) \right)
        \cdot 
        \left( u(\xi+) - u(\xi-) \right)
      \right)}
    \\
    & \leq &
    C \sum_{\xi \in I} \modulo{u(\xi+) - u(\xi-)}^2
    \\
    & \leq &
    C 
    \cdot
    \diam \left( u (I) \right)
    \cdot
    \modulo{\mu} (I) \,.
  \end{eqnarray*}
  Using again the Lipschitzeanity of $l_i$, the third summand is
  estimated as
  \begin{displaymath}
    \modulo{
      \sum_{\xi \in I}
      \left(
        l_i \left(u(\xi-) \right) - l_i \left( u(a+) \right)
      \right)
      \cdot
      \left( u(\xi+) - u(\xi-) \right)
    }
    \leq
    C 
    \cdot
    \diam \left( u (I) \right)
    \cdot
    \modulo{\mu} (I)
  \end{displaymath}
  while the last one can be rewritten as
  \begin{eqnarray*}
    & &
    l_i\left( u(a+) \right)
    \cdot
    \left(
      \sum_{\xi \in I} \left( u(\xi+) - u(\xi-) \right)
      +
      \int_I d\mu_c(\xi)
    \right)
    \\
    & = &
    l_i\left( u(a+) \right) \cdot
    \left( \mu_a(I) + \mu_c(I) \right)
    \\
    & = &
    l_i\left( u(a+) \right) \cdot \mu (I)
    \\
    & = &
    l_i\left( u(a+) \right) \cdot
    \left( u(b-) - u(a+) \right)
  \end{eqnarray*}
  completing the proof.
\end{proof}

\begin{lemma}
  \label{lemma:approx}
  Let $u \in \mathcal{D}_\delta$ with wave measure $\mu_1, \ldots,
  \mu_n$. Then, there exists a sequence $v_\nu \in
  \mathcal{D}_\delta^*$ with wave measures $\mu^\nu_1, \ldots,
  \mu^\nu_n$ such that
  \begin{eqnarray}
    \nonumber
    & &
    \lim_{\nu \to +\infty} \norma{v_\nu - u}_{\mathbf{L}^\infty}
    =
    0\,,
    \qquad
    \lim_{\nu \to +\infty} \norma{v_\nu - u}_{\L1}
    =
    0
    \\
    & &\label{eq:mu-convergence}
    \lim_{\nu \to +\infty} \mu_i^{\nu,\pm} (I)
    =
    \mu_i^{\pm} (I) \ 
    \mbox{ for any interval } I \subseteq \reali
    \\
    & &\label{eq:qconvergence}
    \lim_{\nu \to +\infty} \hat\mathbf{Q}(v_\nu)
    =
    \hat\mathbf{Q}(u)\,.
  \end{eqnarray}
  Moreover, an explicit definition of such a sequence is~\Ref{eq:vnu}.
\end{lemma}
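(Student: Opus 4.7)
The approach is to define $v_\nu$ by sampling $u$ on a finite partition that captures the large jumps of $u$ and refines the continuous part. Since $u$ is right continuous and of bounded total variation, the wave measures $\mu_i$ are finite Borel measures with at most countably many atoms, and $\modulo{\mu} = \sum_i \modulo{\mu_i}$ is a finite measure. For each $\nu$, I would first choose $R_\nu$ so large that $\modulo{\mu}(\reali \setminus [-R_\nu,R_\nu]) < 1/\nu$ and $\int_{\modulo{x}>R_\nu} \modulo{u(x)} \, dx < 1/\nu$, and then select a finite partition $-R_\nu = x_0^\nu < x_1^\nu < \ldots < x_{N_\nu+1}^\nu = R_\nu$ with the two properties: (a) every $y$ satisfying $\modulo{\mu}(\{y\}) \geq 1/\nu$ equals some $x_\alpha^\nu$; (b) every open subinterval $]x_\alpha^\nu,x_{\alpha+1}^\nu[$ has $\modulo{\mu}$-mass less than $1/\nu$ and oscillation of $u$ less than $1/\nu$. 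Such a partition exists because only finitely many atoms of $\mu$ exceed mass $1/\nu$, and between consecutive such atoms $\modulo{\mu}$ is a finite measure with all atoms of mass below $1/\nu$, so it admits a subdivision into arbitrarily short intervals of small total variation. I then choose continuity points $y_\alpha^\nu \in \;]x_\alpha^\nu,x_{\alpha+1}^\nu[$ of $u$ and define
\begin{equation}
  \label{eq:vnu}
  v_\nu = \sum_{\alpha=0}^{N_\nu} u(y_\alpha^\nu) \, \chi_{[x_\alpha^\nu, x_{\alpha+1}^\nu[}
\end{equation}
extended by $0$ outside $[-R_\nu,R_\nu]$.

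The $\L{\infty}$ and $\L{1}$ convergences follow at once: on each subinterval $\modulo{v_\nu(x) - u(x)}$ is bounded by the $1/\nu$-oscillation of $u$ there, and the contribution outside $[-R_\nu, R_\nu]$ is controlled by the choice of $R_\nu$. That $v_\nu \in \mathcal{D}_\delta^*$ for all large $\nu$ follows a posteriori from~\Ref{eq:qconvergence} and Proposition~\ref{prop:coincide}, since these force $\mathbf{\Upsilon}(v_\nu) \to \hat\mathbf{\Upsilon}(u) < \delta$.

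The core of the proof is~\Ref{eq:mu-convergence}. Because $y_{\alpha-1}^\nu$ and $y_\alpha^\nu$ are continuity points of $u$, Lemma~\ref{lem:diam} gives
\begin{displaymath}
  \modulo{ E_i\bigl(u(y_{\alpha-1}^\nu), u(y_\alpha^\nu)\bigr) - \mu_i\bigl(]y_{\alpha-1}^\nu, y_\alpha^\nu[\bigr) } \leq C \, \diam u\bigl(]y_{\alpha-1}^\nu, y_\alpha^\nu[\bigr) \, \modulo{\mu}\bigl(]y_{\alpha-1}^\nu, y_\alpha^\nu[\bigr) \, .
\end{displaymath}
Summing over $\alpha$ with $x_\alpha^\nu \in I$, and using that the intervals $]y_{\alpha-1}^\nu, y_\alpha^\nu[$ are disjoint while the $y_\alpha^\nu$ carry no atom of $\mu_i$, yields $\modulo{\mu_i^\nu(I) - \mu_i(I)} = O(1/\nu)$ with an additional $O(1/\nu)$ error at the two endpoints of $I$. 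To upgrade this to convergence of the positive and negative parts I would further refine the partition using the Hahn decomposition of each $\mu_i$: by mutual singularity of $\mu_i^+$ and $\mu_i^-$, for any $\epsilon > 0$ there are disjoint open sets $U_i^\pm$ with $\mu_i^\pm(\reali \setminus U_i^\pm) < \epsilon$. Simultaneously for $i = 1, \ldots, n$, the partition can be subdivided so that each $]x_\alpha^\nu, x_{\alpha+1}^\nu[$ either lies inside some $U_i^+$ or $U_i^-$, or carries total $\modulo{\mu_i}$-mass below $\epsilon$. On sign-pure intervals, the wave $E_i(u(y_{\alpha-1}^\nu), u(y_\alpha^\nu))$ inherits the sign of the local $\mu_i$ up to negligible error, and choosing $\epsilon = 1/\nu$ delivers $\mu_i^{\nu,\pm}(I) \to \mu_i^\pm(I)$. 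The convergence~\Ref{eq:qconvergence} then follows from the expression~\Ref{eq:defrho} of $\rho$ as a sum of tensor products of the $\mu_i^\pm$, since these products converge on the triangle $\{x<y\}$ once the factors converge on intervals and their total masses stay uniformly bounded.

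The main obstacle is precisely this sign-purity step. A naive approximation only tracks the signed measure $\mu_i$ correctly on intervals; positive and negative $\mu_i$-masses that share a subinterval collapse into a single wave of their difference, which can drastically distort $\modulo{\mu_i^\nu}$ and hence $\hat\mathbf{Q}(v_\nu)$. Building a single partition that is almost sign-pure with respect to all $n$ wave measures $\mu_1, \ldots, \mu_n$ simultaneously is the technical heart of the construction; once it is in place, Lemma~\ref{lem:diam} handles the rest without difficulty.
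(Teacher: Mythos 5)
Your construction of $v_\nu$ differs from the paper's~\Ref{eq:vnu} in a way that breaks your key estimate. You sample $u$ at interior continuity points $y_\alpha^\nu$, so the single jump of $v_\nu$ at a partition point $x_\alpha^\nu$ carrying a large atom merges that atom with the continuous variation on either side. When you then invoke Lemma~\ref{lem:diam} on $\left]y_{\alpha-1}^\nu, y_\alpha^\nu\right[$, both factors in the error bound $C\cdot\diam\left(u\left(\left]y_{\alpha-1}^\nu,y_\alpha^\nu\right[\right)\right)\cdot\modulo{\mu}\left(\left]y_{\alpha-1}^\nu,y_\alpha^\nu\right[\right)$ are bounded below by the size of the atom at $x_\alpha^\nu$; summing over the atoms retained in the partition, the resulting bound tends to $C\sum_y\modulo{\mu\left(\{y\}\right)}^2$, a fixed positive number whenever $u$ has a jump, not $O(1/\nu)$. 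The paper sidesteps this by assigning to $v_\nu$ the values $u(x_\alpha-)$ and $u(x_\alpha+)$ on the two half-intervals adjacent to each $x_\alpha$: the atoms of each $\mu_i$ are then reproduced \emph{exactly} as jumps of $v_\nu$, and Lemma~\ref{lem:diam} is only ever applied to the atom-free intervals $\left]x_{\alpha-1},x_\alpha\right[$, on which both the diameter and the mass are $O(1/\nu)$. Your version could be repaired by splitting each merged wave into three pieces via Glimm interaction estimates, but as written the step fails.

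Second, your route to~\Ref{eq:mu-convergence} for the positive and negative parts --- a partition simultaneously almost sign-pure for the Hahn decompositions of all $n$ wave measures --- is precisely the part you flag as the technical heart, and it is not carried out: the open sets $U_i^\pm$ have in general infinitely many components, so a finite partition subordinate to all of them (together with the bookkeeping of the exceptional small-mass intervals against the atoms) requires a genuine construction. The paper avoids this machinery entirely: it proves $\limsup_\nu\modulo{\mu_i^\nu}(I)\leq\modulo{\mu_i}(I)$ from the explicit estimate, obtains the reverse inequality from the lower semicontinuity of the total variation applied to the uniformly convergent distribution functions $w_i^\nu(x)=\mu_i^\nu\left(\left]-\infty,x\right]\right)$, and then recovers $\mu_i^{\nu,\pm}$ as half the sum and difference of $\modulo{\mu_i^\nu}$ and $\mu_i^\nu$. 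Finally, your one-line deduction of~\Ref{eq:qconvergence} from convergence of the factors glosses over the near-diagonal contribution to $\rho$ on $\{x<y\}$; the paper controls it explicitly by decomposing the triangle into off-diagonal rectangles, where convergence of product measures applies, plus diagonal squares whose $\rho$-mass is $O(\epsilon)$ by the fineness of the partition.
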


\begin{proof}
  For $\nu \in \naturali \setminus \{0\}$ choose $a,b \in \reali$ with
  $b - a > 1$ and so that
  \begin{eqnarray*}
    \modulo{\mu} \left( \left] -\infty, a \right] \right)
    +
    \modulo{\mu} \left( \left[ b, +\infty \right[ \right)
    & \leq &
    \frac{1}{\nu}
    \\
    \int_{-\infty}^a \norma{u(x)} \, dx
    +
    \int_b^{+\infty} \norma{u(x)} \, dx
    & \leq &
    \frac{1}{\nu}
  \end{eqnarray*}
  Choose a finite sequence of real numbers $x_1, \ldots, x_N$ so that
  $a = x_1 < x_2 < \cdots < x_{N-1} < x_N = b$ and $\modulo{\mu}
  \left( \left] x_{\alpha-1}, x_\alpha \right[ \right) \leq
  \frac{1}{(b-a)\nu}$.
  Introduce the points $y_0 = x_1 - 1$, $y_\alpha = (x_\alpha +
  x_{\alpha+1}) / 2$ for $\alpha = 1, \ldots, N-1$ and $y_N = x_N +1$.
  Let
  \begin{equation}
    \label{eq:vnu}
    v_\nu
    =
    \sum_{\alpha=1}^N
    \left(
      u(x_\alpha-) \, \caratt{\left[y_{\alpha-1}, x_\alpha\right[}
      +
      u(x_\alpha+) \, \caratt{\left[x_\alpha, y_\alpha\right[} 
    \right) \,.
  \end{equation}

  Due to the above definitions, the $\L1$ and $\L\infty$ convergence
  $v_\nu \to u$ is immediate (observe that both $v_\nu$ and $u$ are
  right continuous).

  We first consider the intervals $I = \left]- \infty, x \right]$ for
  $x \in \reali$ or $I = \reali$.  Let $\mu_i^\nu$ be the wave measure
  corresponding to $v_\nu$. For notational simplicity, below we set
  $x_0 = -\infty$, $x_{N+1} = +\infty$, $u(x_0+) = 0$ and $u(x_{N+1}-)
  = 0$.  Let $\bar\alpha$ be such that $x \in \left[ x_{\bar\alpha},
    x_{\bar\alpha+1} \right[\ $ ($\bar \alpha = N$ for $I=\reali$).
  For any $i=1, \ldots, n$,
  \begin{eqnarray}
    \label{eq:inizio}
    \modulo{\mu_i^{\nu}} (I) - \modulo{\mu_i}(I)
    & = &
    \sum_{\alpha = 1}^{\bar\alpha}
    \modulo{E_i \left( u(x_\alpha-), u(x_\alpha+) \right)}
    \\
    \nonumber
    & &
    +
    \sum_{\alpha = 1}^{\bar\alpha}
    \modulo{E_i \left( u(x_{\alpha-1}+), u(x_{\alpha}-) \right)}
    \\
    \nonumber
    & &
    +
    \modulo{E_i \left(u(x_{\bar\alpha}+), u(x_{\bar\alpha+1}-)\right)}
    \caratt{\left[y_{\bar\alpha}, x_{\bar\alpha+1} \right[} (x)
    \\
    \nonumber
    & &
    -
    \sum_{\alpha = 1}^{\bar\alpha}
    \modulo{\mu_i} \left( \{x_\alpha\}\right)
    -
    \sum_{\alpha = 1}^{\bar\alpha}
    \modulo{\mu_i} \left( \left] x_{\alpha-1}, x_{\alpha} \right[ \right)
    \\
    \nonumber
    & &
    -
    \modulo{\mu_i} \left( \left] x_{\bar\alpha}, x \right] \right) \,.
  \end{eqnarray}
  Observe that
  \begin{eqnarray*}
    E_i \left( u(x_\alpha-), u(x_\alpha+) \right)
    & = &
    \mu_i \left( \{x_\alpha\}\right)
    \quad\qquad \mbox{ for all } \quad
    \alpha = 1, \ldots, N
    \\
    \modulo{E_i \left( u(x_\alpha-), u(x_\alpha+) \right)}
    & = &
    \modulo{\mu_i} \left( \{x_\alpha\}\right)
    \ \qquad \mbox{ for all } \quad
    \alpha = 1, \ldots, N
    \\
    \modulo{E_i \left(u(x_{\bar\alpha}+), u(x_{\bar\alpha+1}-)\right)}
    & \leq &
    C \, 
    \modulo{\mu} \left( \left] x_{\bar\alpha}, x_{\bar\alpha+1} \right[ \right)
    \; \leq \;
    C /\nu
    \\
    \modulo{\mu_i} \left( \left] x_{\bar\alpha}, x \right] \right)
    & \leq &
    C \, \modulo{\mu}
    \left( \left] x_{\bar\alpha}, x_{\bar\alpha+1} \right[ \right)
    \; \leq \;
    C /\nu
  \end{eqnarray*}
  so that, by Lemma~\ref{lem:diam}
  \begin{eqnarray}
    \nonumber
    \modulo{\mu_i^{\nu}} (I) - \modulo{\mu_i}(I) \!\!\!
    & = &
    \!
    \sum_{\alpha=1}^{\bar\alpha} \!
    \left(
      \modulo{E_i \! \left( u(x_{\alpha-1}+), u(x_{\alpha}-) \right)}
      -
      \modulo{\mu_i} \left( \left] x_{\alpha-1}, x_\alpha \right[ \right)
    \right)
    + \frac{C}{\nu}
    \\
    \nonumber
    & \leq &
    \!
    \sum_{\alpha=1}^{\bar\alpha} \!
    \left(
      \modulo{E_i \! \left( u(x_{\alpha-1}+), u(x_{\alpha}-) \right)}
      -
      \modulo{\mu_i \left( \left] x_{\alpha-1}, x_\alpha \right[ \right)}
    \right)
    + \frac{C}{\nu}
    \\
    \nonumber
    & \leq &
    \sum_{\alpha=1}^{\bar\alpha}
    \modulo{
      E_i \left( u(x_{\alpha-1}+), u(x_{\alpha}-) \right) 
      -
      \mu_i \left( \left] x_{\alpha-1}, x_\alpha \right[ \right)
    }
    + \frac{C}{\nu}
    \\
    \nonumber
    & \leq &
    C \cdot \sum_{\alpha=1}^{\bar\alpha}
    \diam \left( u\left( \left]x_{\alpha-1}, x_\alpha\right[ \right) \right)
    \modulo{\mu} \left( \left] x_{\alpha-1}, x_\alpha \right[ \right)
    + \frac{C}{\nu}
    \\
    \nonumber
    & \leq &
    C \cdot \sum_{\alpha=1}^{\bar\alpha}
    \frac{1}{\nu}
    \modulo{\mu} \left( \left] x_{\alpha-1}, x_\alpha \right[ \right)
    + \frac{C}{\nu}
    \\
    \label{eq:fine}
    & \leq &
    C \cdot \left( 1+\modulo{\mu} (\reali) \right) \cdot \frac{1}{\nu}
  \end{eqnarray}
  and $\displaystyle \limsup_{\nu \to +\infty} \modulo{\mu_i^\nu} (I)
  \leq \modulo{\mu_i} (I)$.

  Passing to the other inequality, introduce the functions
  \begin{displaymath}
    w_i(x) = \mu_i \left( \left] -\infty, x\right] \right)
    \quad \mbox{ and } \quad
    w_i^\nu(x) = \mu_i^\nu \left( \left] -\infty, x\right] \right) \,.
  \end{displaymath}
  and repeat the same computations used
  in~\Ref{eq:inizio}--\Ref{eq:fine} to obtain
  \begin{displaymath}
    \modulo{w_i^\nu(x) - w_i(x)}
    =
    \modulo{\mu_i^\nu(I) - \mu_i(I)}
    \leq
    C \cdot \left( 1+\modulo{\mu} (\reali) \right) \cdot \frac{1}{\nu} 
  \end{displaymath}
  showing that $w_i^\nu \to w_i$ uniformly on $\reali$. By the lower
  semicontinuity of the total variation
  \begin{displaymath}
    \modulo{\mu_i} (I) 
    =
    \tv \left( w_i, \left]-\infty, x\right] \right)
    \leq
    \liminf_{\nu \to +\infty}
    \tv \left( w_i^\nu, \left]-\infty, x\right] \right)
    =
    \liminf_{\nu \to +\infty}
    \modulo{\mu_i^\nu} (I)
  \end{displaymath}
  showing that $\modulo{\mu_i^\nu}(I) \to \modulo{\mu_i}(I)$ as $\nu
  \to +\infty$.  This convergence, together with the uniform
  convergence above, imply that
  \begin{eqnarray*}
    \lim_{\nu \to +\infty}
    \left( \mu_i^{\nu,+} (I) + \mu_i^{\nu,-} (I) \right)
    & = &
    \mu_i^{+} (I) + \mu_i^{-} (I)
    \\
    \lim_{\nu \to +\infty}
    \left( \mu_i^{\nu,+} (I) - \mu_i^{\nu,-} (I) \right)
    & = &
    \mu_i^{+} (I) - \mu_i^{-} (I)
  \end{eqnarray*}
  which together imply~\Ref{eq:mu-convergence} for $I =
  \left]-\infty,x \right]$ or $I = \reali$.  Let now $\tilde u(x)$ and
  $\tilde v_\nu(x)$ be the right continuous representative of
  respectively $u(-x)$ and $v_\nu(-x)$ with corresponding wave
  measures $\tilde\mu_i$ and $\tilde\mu_i^\nu$. The previous
  computation show that $\tilde\mu_i^{\nu,\pm} \left( \left]-\infty,x\right]
  \right) \to \tilde\mu_i^\pm \left(\left]-\infty,x\right] \right)$ for
  all real $x$. But $\tilde\mu_i^{\nu,\pm} \left( \left]-\infty,x \right]
  \right) = \mu_i^{\nu,\pm} \left( \left[-x,+\infty\right[ \right)$ and
  $\tilde\mu_i^\pm \left(\left]-\infty,x \right] \right) = \mu_i^\pm\left(
    \left[-x,+\infty \right[ \right)$. Therefore,
  \Ref{eq:mu-convergence} holds also for the intervals $I =
  \left[x,+\infty \right[$ and therefore for all real intervals $I
  \subseteq \reali$.

  Passing to~\Ref{eq:qconvergence} we observe that it is enough to
  show the convergence of every single term in the sum~\Ref{eq:defrho}
  which defines the measure $\rho$. Since the computations for these
  terms are identical, we show the convergence of only one, say
  $\mu_i^{\nu,+}\otimes\mu_i^{\nu,-}$.  Fix $\epsilon>0$ and choose a
  finite set of real numbers $x_1, \ldots, x_{N_\epsilon}$ so
  that $-\infty=x_0^\epsilon < x_1^\epsilon < \cdots <
  x_{N_\epsilon}^\epsilon < x_{N_\epsilon+1}^\epsilon=+\infty$ and
  $\modulo{\mu} \left( \left] x_{\alpha-1}^\epsilon, x_\alpha^\epsilon
    \right[ \right) \leq \epsilon$. To simplify the notations, we
  define
  \begin{displaymath}
    K 
    = 
    \left\{ (x,y) \in \reali^2 \colon x < y \right\}
    ,\quad  
    \tau^\nu
    =
    \mu_i^{\nu,+} \otimes \mu_i^{\nu,-}
    \mbox{ and} \quad
    \tau = \mu_i^{+} \otimes \mu_i^{-}. 
  \end{displaymath}
  Now, write $K$ as the union of a finite family of disjoint sets as:
  \begin{displaymath}
    R_{\alpha,\beta}
    =
    \left[x_\alpha^\epsilon,x_{\alpha+1}^\epsilon \right[
    \times
    \left[x_\beta^\epsilon,x_{\beta +1}^\epsilon \right[,
    \quad
    K
    =
    \bigg(
    \bigcup_{{\alpha,\beta=0 \atop\alpha<\beta}}^{ N_\epsilon}
    R_{\alpha,\beta} 
    \bigg)
    \cup
    \bigcup_{\alpha=0}^{N_\epsilon} 
    \left(R_{\alpha,\alpha} \cap K\right)
  \end{displaymath}
  and compute
  \begin{eqnarray*}
    \modulo{\tau^\nu(K)-\tau(K)}
    & \leq &
    \sum_{{\alpha,\beta=0 \atop\alpha<\beta}}^{N_\epsilon} 
    \modulo{\tau^\nu(R_{\alpha,\beta}) - \tau(R_{\alpha,\beta})}
    \\
    & &
    \quad +
    \sum_{\alpha=0}^{N_\epsilon} 
    \left(
      \tau^\nu (R_{\alpha,\alpha} \cap K)
      +
      \tau (R_{\alpha,\alpha}\cap K)
    \right).
  \end{eqnarray*}
  We have also the estimate
  \begin{displaymath}
    \begin{array}{rcccl}
      \tau^\nu (R_{\alpha,\alpha} \cap K)
      \!\! & \leq & \!\!
      \tau^\nu \!
      \left(
        R_{\alpha,\alpha}
        \setminus 
        \left\{ (x_\alpha^\epsilon,x_\alpha^\epsilon ) \right\} 
      \right)
      \! & = & \!
      \tau^\nu ( R_{\alpha, \alpha})
      -
      \tau^\nu \!
      \left( \left\{ (x_\alpha^\epsilon,x_\alpha^\epsilon) \right\} \right)
      \\
      \tau (R_{\alpha,\alpha} \cap K)
      \!\! & \leq & \!\!
      \tau 
      \left(
        R_{\alpha,\alpha} 
        \setminus 
        \left\{ (x_\alpha^\epsilon,x_\alpha^\epsilon) \right\}
      \right)
      \! & = & \!
      \tau (R_{\alpha,\alpha})
      -
      \tau \left( \left\{(x_\alpha^\epsilon,x_\alpha^\epsilon) \right\} \right)
    \end{array}
  \end{displaymath}  
  The limit~\Ref{eq:mu-convergence} implies that the product measure
  converges on rectangles:
  \begin{eqnarray*}
    \lim_{\nu\to+\infty}\tau^\nu(R_{\alpha,\beta})
    & = &
    \tau \left(R_{\alpha,\beta}\right)
    \\
    \lim_{\nu\to+\infty} 
    \tau^\nu \left( \left\{ \left(x_\alpha^\epsilon, x_\alpha^\epsilon
        \right) \right\} \right)
    & = &
    \tau
    \left(\left\{ \left(x_\alpha,x_\alpha \right) \right\} \right) \,.
  \end{eqnarray*}
  Therefore
  \begin{eqnarray*}
    \limsup_{\nu\to +\infty} \modulo{\tau^\nu(K)-\tau(K)}
    & \leq & 
    2\sum_{\alpha=0}^{N_\epsilon} 
    \left(
      \tau (R_{\alpha,\alpha}) 
      - 
      \tau \left(\left\{(x_\alpha^\epsilon,x_\alpha^\epsilon)
        \right\}\right)
    \right)
    \\
    & = &
    2\sum_{\alpha=0}^{N_\epsilon} 
    \bigg(
    \mu_i^
    +
    \left( \left\{x^\epsilon_\alpha\right\} \right)
    \mu_i^- \left( ]x^\epsilon_\alpha,x^\epsilon_{\alpha+1}[ \right)
    \\
    & &
    +
    \mu_i^+ \left( ]x^\epsilon_\alpha,x^\epsilon_{\alpha+1}[\right)
    \mu_i^- \left( \{x^\epsilon_\alpha\} \right)
    \\
    & &
    +
    \mu_i^+ \left( ]x^\epsilon_\alpha,x^\epsilon_{\alpha+1}[\right) 
    \mu_i^- \left( ]x^\epsilon_\alpha,x^\epsilon_{\alpha+1}[\right)
    \bigg)
    \\
    & \leq & 
    C \sum_{\alpha=0}^{N_\epsilon} \epsilon 
    \modulo{\mu} \left( [x^\epsilon_\alpha,x^\epsilon_{\alpha+1}[ \right)
    \\
    & \leq & 
    C \epsilon \modulo{\mu} (\reali) \,.
  \end{eqnarray*}
  By the arbitrariness of $\epsilon$, $\displaystyle \lim_{\nu\to
    +\infty}\tau^\nu (K) = \tau (K)$, completing the proof.
\end{proof}

The following proof, simpler than that
in~\cite[Theorem~10.1]{BressanLectureNotes}, is based on the piecewise
constant approximations introduced in Lemma~\ref{lemma:approx}.

\begin{proposition}
  \label{prop:lowsec}
  The functionals $\hat \mathbf{Q}$ and $\hat\mathbf{\Upsilon}$
  defined in~\Ref{eq:def2Q} and~\Ref{eq:def2U} are lower
  semicontinuous with respect to the $\mathbf{L}^1$ norm.
\end{proposition}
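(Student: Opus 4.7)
The plan is to reduce lower semicontinuity of $\hat{\mathbf{Q}}$ (and then of $\hat{\mathbf{\Upsilon}}$) to the piecewise constant setting by combining Lemma~\ref{lemma:approx} with the grid-reduction Proposition~\ref{prop:reduce}. I focus on $\hat{\mathbf{Q}}$; the argument for $\hat{\mathbf{\Upsilon}}$ is entirely parallel, since its linear part $\sum_{i=1}^n\modulo{\mu_i}(\reali)$ behaves the same way under reduction (and coincides with $V$ on $\mathcal{D}_\delta^*$).

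Let $u_\nu \to u$ in $\L1$ inside $\mathcal{D}_\delta$, and assume after extraction that $\hat{\mathbf{Q}}(u_\nu) \to L := \liminf_\nu \hat{\mathbf{Q}}(u_\nu)$. First, apply Lemma~\ref{lemma:approx} to each $u_\nu$ and diagonalize to get a piecewise constant $w_\nu \in \mathcal{D}_\delta^*$ satisfying $\norma{w_\nu - u_\nu}_{\L1} < 1/\nu$ and $\modulo{\hat{\mathbf{Q}}(w_\nu) - \hat{\mathbf{Q}}(u_\nu)} < 1/\nu$. Since $\hat{\mathbf{Q}} = \mathbf{Q}$ on $\mathcal{D}_\delta^*$ and $w_\nu \to u$ in $\L1$, passing to a further subsequence we also get $w_\nu \to u$ almost everywhere and $\mathbf{Q}(w_\nu) \to L$. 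Independently, apply Lemma~\ref{lemma:approx} to $u$ itself to obtain piecewise constant $v_k \in \mathcal{D}_\delta^*$ with $v_k \to u$ in $\L1$ and in $\mathbf{L}^\infty$, and $\mathbf{Q}(v_k) \to \hat{\mathbf{Q}}(u)$.

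The core step is a comparison between $\mathbf{Q}(v_k)$ and $\liminf_\nu \mathbf{Q}(w_\nu)$ for each fixed $k$. Let $x_1^k < \cdots < x_{N_k+1}^k$ be the partition defining $v_k$. In each open interval $(x_\alpha^k, x_{\alpha+1}^k)$ pick a sampling point $y_\alpha^k$ that is a continuity point of $u$ and at which $w_\nu(y_\alpha^k) \to u(y_\alpha^k)$ (both possible since the jump set of $u$ is countable and $w_\nu \to u$ a.e.). Define
\[
\check w_{\nu,k} = \sum_{\alpha} w_\nu(y_\alpha^k)\,\chi_{[x_\alpha^k, x_{\alpha+1}^k[}\,.
\]
Proposition~\ref{prop:reduce} gives $\mathbf{Q}(\check w_{\nu,k}) \leq \mathbf{Q}(w_\nu)$. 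Letting $\nu \to +\infty$ with $k$ fixed, the values of $\check w_{\nu,k}$ converge componentwise to $u(y_\alpha^k)$; by Remark~\ref{rem:Lipschitz} $\mathbf{Q}(\check w_{\nu,k})$ converges to the interaction potential of the piecewise constant function $\tilde v_k$ with the same partition and values $u(y_\alpha^k)$, and a second application of Remark~\ref{rem:Lipschitz} gives $\modulo{\mathbf{Q}(\tilde v_k) - \mathbf{Q}(v_k)} \leq C\,\norma{v_k - u}_{\mathbf{L}^\infty}$, which tends to $0$ as $k \to +\infty$. Chaining these yields $\hat{\mathbf{Q}}(u) = \lim_k \mathbf{Q}(v_k) \leq \liminf_\nu \mathbf{Q}(w_\nu) = L$, completing the proof.

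The main obstacle is managing the two limits in the correct order: the diagonal extraction of $w_\nu$ must be compatible with pointwise a.e.\ convergence so that, for every fixed grid coming from $v_k$, one can select sampling points at which $w_\nu$ converges to $u$; one must also invoke Remark~\ref{rem:Lipschitz} quantitatively to guarantee that the $\L^\infty$-error between $\tilde v_k$ and $v_k$ vanishes independently of $\nu$. Once this is arranged, the case of $\hat{\mathbf{\Upsilon}}$ follows by the same scheme, since the reduction in Proposition~\ref{prop:reduce} also decreases $V$ and Lemma~\ref{lemma:approx} delivers $\hat{\mathbf{\Upsilon}}(v_\nu)\to\hat{\mathbf{\Upsilon}}(u)$ via~\Ref{eq:mu-convergence}.
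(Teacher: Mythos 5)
Your architecture is the same as the paper's: approximate the $u_\nu$ by piecewise constant $w_\nu$ via Lemma~\ref{lemma:approx}, approximate $u$ by a piecewise constant $v_k$ on a fixed grid, resample $w_\nu$ on that grid, apply Proposition~\ref{prop:reduce}, and let $\nu\to+\infty$ first and $k\to+\infty$ second. The inner limit (fixed $k$, finitely many values converging) is fine. The gap is the step
\begin{displaymath}
  \modulo{\mathbf{Q}(\tilde v_k)-\mathbf{Q}(v_k)}
  \leq
  C\cdot\norma{v_k-u}_{\mathbf{L}^\infty}\,.
\end{displaymath}
Remark~\ref{rem:Lipschitz} provides a Lipschitz constant uniform in $N$ only for the dependence on a \emph{single} value $u_{\bar\alpha}$; if all $N_k$ values are perturbed simultaneously, each by at most $\eta$, telescoping only yields $C\,N_k\,\eta$. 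In the construction of Lemma~\ref{lemma:approx} the number of cells $N_k$ grows precisely as fast as the per-cell oscillation $\norma{v_k-u}_{\mathbf{L}^\infty}$ decays (each cell carries mass about $\modulo{\mu}(\reali)/N_k$), so $N_k\norma{v_k-u}_{\mathbf{L}^\infty}$ need not vanish. The alternative bound, summing the per-cell errors $\modulo{u(y_\alpha^k)-v_k(y_\alpha^k)}$ over $\alpha$, is controlled only by $C\cdot\tv(u)$ --- bounded, not small. So the comparison between $\tilde v_k$ and $v_k$ does not close as written, and with it the final chain $\hat{\mathbf{Q}}(u)=\lim_k\mathbf{Q}(v_k)\leq L$.

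The repair is exactly the device in the paper's proof: do not take arbitrary continuity points in each cell, but sampling points lying in the full-measure set $D$ where $w_\nu\to u$ \emph{and} chosen adjacent to the appropriate endpoint of each cell --- just to the left of $x_\alpha^k$ where $v_k$ takes the value $u(x_\alpha^k-)$, just to the right where it takes $u(x_\alpha^k+)$. By right continuity and existence of left limits, each sampled value can then be made within $\epsilon/(C N_k)$ of the corresponding value of $v_k$, and the per-coordinate uniform constant of Remark~\ref{rem:Lipschitz}, summed over the $2N_k$ coordinates, gives $\modulo{\mathbf{Q}(\tilde v_k)-\mathbf{Q}(v_k)}\leq\epsilon$. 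With that modification your argument coincides with the paper's proof (which organizes the same estimate as a chain of $\epsilon$-inequalities ending in $\hat{\mathbf{\Upsilon}}(u)\leq\mathbf{\Upsilon}(v_\nu)+3\epsilon$); the parallel treatment you indicate for $\hat{\mathbf{\Upsilon}}$ is then correct.
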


\begin{proof}
  We prove the lower semicontinuity only of $\hat\mathbf{\Upsilon}$
  since the other one is easier. Take $u_\nu,\;u \in
  \mathcal{D}_\delta$ such that $u_\nu \to u$ in $\L1$. By
  Lemma~\ref{lemma:approx}, there exists a sequence of piecewise
  constant functions $v_\nu \in \mathcal{D}_\delta^*$ such that
  \begin{displaymath}
    \norma{v_\nu-u_\nu}_{\L1} \leq \frac{1}{\nu}
    \,,\qquad
    \hat\mathbf{\Upsilon} (v_\nu) 
    \leq 
    \hat\mathbf{\Upsilon}(u_\nu) + \frac{1}{\nu} \,.
  \end{displaymath}
  If we define $l = \liminf_{\nu\to +\infty}
  \hat\mathbf{\Upsilon}(v_\nu)$, then $l \leq \liminf_{\nu\to +\infty}
  \hat\mathbf{\Upsilon}(u_\nu)$. Moreover, by possibly passing to a
  subsequence, we suppose that $l = \lim_{\nu\to +\infty}
  \hat\mathbf{\Upsilon}(v_\nu)$ and that $v_\nu(x)\to u(x)$ for every
  $x\in D$, where $\reali\setminus D$ has zero Lebesgue measure. Fix
  now $\epsilon>0$ and, using again Lemma~\ref{lemma:approx}, choose a
  piecewise constant function $v_\epsilon\in\mathcal{D}_\delta^*$
  which approximates $u$:
  \begin{displaymath}
    v_\epsilon
    =
    \sum_{\alpha=1}^{N_\epsilon}
    \left(
      u(x_\alpha^\epsilon-) \, 
      \caratt{\left[y_{\alpha-1}^\epsilon, 
          x_\alpha^\epsilon\right[}
      +
      u(x_\alpha^\epsilon+) \, \caratt{\left[x_\alpha^\epsilon, 
          y_\alpha^\epsilon\right[} 
    \right)
  \end{displaymath}
  \begin{displaymath}
    \hat\mathbf{\Upsilon}(u)
    \leq 
    \hat\mathbf{\Upsilon}\left(v_\epsilon\right) + \epsilon 
    =
    \mathbf{\Upsilon}\left(v_\epsilon\right) + \epsilon
    \,.
  \end{displaymath}
  By Remark~\ref{rem:Lipschitz}, for every $\alpha=1,\ldots,
  N_\epsilon$, we can choose points
  \begin{displaymath}
    \check x_{\alpha}^{\epsilon-} \in 
    \left]y_{\alpha-1}^\epsilon, x_\alpha^\epsilon \right[ \cap D
    \quad\mbox{ and }\quad
    \check x_{\alpha}^{\epsilon+} \in 
    \left] x_{\alpha}^{\epsilon},y_\alpha^\epsilon \right[ \cap D 
  \end{displaymath}
  such that the function
  \begin{displaymath}
    \bar v_\epsilon
    =
    \sum_{\alpha=1}^{N_\epsilon}
    \left(
      u(\check x_{\alpha}^{\epsilon-}) \, 
      \caratt{\left[y_{\alpha-1}^\epsilon, 
          x_\alpha^\epsilon\right[}
      +
      u(\check x_{\alpha}^{\epsilon+}) \, 
      \caratt{\left[x_\alpha^\epsilon, 
          y_\alpha^\epsilon\right[} 
    \right)
  \end{displaymath}
  satisfies $ \mathbf{\Upsilon}(v_\epsilon) \leq
  \mathbf{\Upsilon}\left(\bar v_\epsilon\right) + \epsilon$. Define
  now
  \begin{displaymath}
    \check v_{\epsilon,\nu}
    =
    \sum_{\alpha=1}^{N_\epsilon}
    \left(
      v_\nu(\check x_{\alpha}^{\epsilon-}) \, 
      \caratt{\left[y_{\alpha-1}^\epsilon, 
          x_\alpha^\epsilon\right[}
      +
      v_\nu(\check x_{\alpha}^{\epsilon+}) \, 
      \caratt{\left[x_\alpha^\epsilon, 
          y_\alpha^\epsilon\right[} 
    \right) \,.
  \end{displaymath}
  Since $v_\nu\left(\check x_{\alpha}^{\epsilon\pm}\right)\to
  u\left(\check x_{\alpha}^{\epsilon\pm}\right)$, we can apply again
  Remark~\ref{rem:Lipschitz} to obtain that for $\nu$ sufficiently
  large one has $\mathbf{\Upsilon}\left(\bar v_\epsilon\right)\le
  \mathbf{\Upsilon}\left(\check v_{\epsilon,\nu}\right) +\epsilon$.

  But $\check v_{\epsilon,\nu}$ is obtained by removing an ordered
  sequence of values attained by $v_\nu$, therefore we can apply
  Proposition~\ref{prop:reduce} to get $\mathbf{\Upsilon}\left(\check
    v_{\epsilon,\nu}\right)\leq \mathbf{\Upsilon}\left(v_\nu\right)$
  and therefore we have the following chain of inequalities:
  \begin{displaymath}
    \hat\mathbf{\Upsilon}\left(u\right)
    \leq 
    \mathbf{\Upsilon}\left(v_\epsilon\right)
    +
    \epsilon
    \leq 
    \mathbf{\Upsilon}\left( \bar v_\epsilon\right)
    +
    2\epsilon
    \leq 
    \mathbf{\Upsilon}\left(\check v_{\epsilon,\nu}\right)
    +
    3 \epsilon
    \leq
    \mathbf{\Upsilon}\left(v_\nu\right)
    +
    3\epsilon \,.
  \end{displaymath}
  Taking the limit as $\nu\to + \infty$ one obtains
  $\hat\mathbf{\Upsilon}\left(u\right) \leq l+3\epsilon$ and the
  arbitrariness of $\epsilon>0$ implies $\hat\mathbf{\Upsilon}
  \left(u\right) \leq l \leq \liminf_{\nu\to +\infty}
  \hat\mathbf{\Upsilon}(u_\nu)$.
\end{proof}

\begin{corollary}
  \label{prop:equal}
  The functionals $\hat\mathbf{Q}$ and $\hat\mathbf{\Upsilon}$ defined
  by ~\Ref{eq:def2Q} and~\Ref{eq:def2U} coincide on all
  $\mathcal{D}_\delta$ with $\mathbf{Q}$ and $\mathbf{\Upsilon}$.
\end{corollary}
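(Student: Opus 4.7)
The proof should be a short combination of the preceding results: Lemma \ref{lemma:approx} produces explicit piecewise constant approximants on which the functionals $\hat\mathbf{Q}$ and $\hat\mathbf{\Upsilon}$ behave well, while Proposition \ref{prop:lowsec} supplies the lower semicontinuity needed for the reverse direction. The plan is to prove two inequalities for each functional and combine them. I describe the argument for $\hat\mathbf{Q}$; the argument for $\hat\mathbf{\Upsilon}$ is identical once one observes that the sequence of Lemma \ref{lemma:approx} satisfies $|\mu_i^\nu|(\reali)\to|\mu_i|(\reali)$ by \Ref{eq:mu-convergence} applied to $I=\reali$, so combining this with \Ref{eq:qconvergence} yields $\hat\mathbf{\Upsilon}(v_\nu)\to\hat\mathbf{\Upsilon}(u)$.

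For the first inequality, fix $u\in\mathcal{D}_\delta$ and let $v_\nu\in\mathcal{D}_\delta^*$ be the piecewise constant sequence produced by Lemma \ref{lemma:approx}. By \Ref{eq:qconvergence}, $\hat\mathbf{Q}(v_\nu)\to\hat\mathbf{Q}(u)$, and since $\hat\mathbf{Q}$ coincides with $\mathbf{Q}$ on $\mathcal{D}_\delta^*$ (as already remarked in the text), this reads $\mathbf{Q}(v_\nu)\to\hat\mathbf{Q}(u)$. Now $v_\nu\to u$ in $\L1$, so for any fixed $\eta>0$ the elements $v_\nu$ eventually lie in $B_\eta(u)$, giving
\begin{displaymath}
  \inf_{v\in B_\eta(u)}\mathbf{Q}(v)\leq\liminf_{\nu\to+\infty}\mathbf{Q}(v_\nu)=\hat\mathbf{Q}(u).
\end{displaymath}
Taking the supremum over $\eta>0$ and invoking the representation of $\bar{\mathbf{Q}}=\mathbf{Q}$ in \Ref{eq:Q} yields $\mathbf{Q}(u)\leq\hat\mathbf{Q}(u)$.

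For the reverse inequality, use the definition \Ref{eq:Q} of $\mathbf{Q}(u)$ to pick piecewise constant maps $w_\nu\in\mathcal{D}_\delta^*$ with $w_\nu\to u$ in $\L1$ and $\mathbf{Q}(w_\nu)\to\mathbf{Q}(u)$. Since $\hat\mathbf{Q}$ agrees with $\mathbf{Q}$ on $\mathcal{D}_\delta^*$ and is lower semicontinuous on $\mathcal{D}_\delta$ by Proposition \ref{prop:lowsec},
\begin{displaymath}
  \hat\mathbf{Q}(u)\leq\liminf_{\nu\to+\infty}\hat\mathbf{Q}(w_\nu)=\liminf_{\nu\to+\infty}\mathbf{Q}(w_\nu)=\mathbf{Q}(u).
\end{displaymath}
Combining the two inequalities gives $\hat\mathbf{Q}(u)=\mathbf{Q}(u)$ on all of $\mathcal{D}_\delta$, and the identical scheme applied to $\hat\mathbf{\Upsilon}$ closes the proof.

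There is no serious obstacle left: the delicate construction is packaged into Lemma \ref{lemma:approx}, the lower semicontinuity into Proposition \ref{prop:lowsec}, and the agreement of the two formulations on piecewise constant functions is built into the definitions. The only subtlety worth double-checking is that Lemma \ref{lemma:approx} indeed gives convergence of $\hat\mathbf{\Upsilon}$ and not merely of $\hat\mathbf{Q}$, which reduces to applying \Ref{eq:mu-convergence} with $I=\reali$ to extract the convergence of the total masses $|\mu_i^\nu|(\reali)\to|\mu_i|(\reali)$.
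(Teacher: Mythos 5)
Your proposal is correct and follows essentially the same route as the paper: both directions are obtained by pairing an approximating sequence (from Lemma~\ref{lemma:approx} in one direction, from the definition~\Ref{eq:Q} in the other) with the appropriate lower semicontinuity, the only cosmetic difference being that where the paper cites Proposition~\ref{prop:Qlsc} you instead unwind the definition of $\bar{\mathbf{Q}}$ directly, which amounts to the same thing. Your side remark on extracting $\modulo{\mu_i^\nu}(\reali)\to\modulo{\mu_i}(\reali)$ from~\Ref{eq:mu-convergence} with $I=\reali$ is exactly the point the paper uses implicitly.
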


\begin{proof}
  It is a straightforward consequence of the lower semicontinuity of
  both $\hat\mathbf{Q}$ and $\hat\mathbf{\Upsilon}$
  (Proposition~\ref{prop:lowsec}) and $\mathbf{Q}$ and
  $\mathbf{\Upsilon}$ (Proposition~\ref{prop:Qlsc}).
  
  Indeed, consider only $\hat\mathbf{\Upsilon}$ and
  $\mathbf{\Upsilon}$. They obviously coincide on
  $\mathcal{D}_\delta^*$. Fix now $u\in\mathcal{D}_\delta$. By the
  definition~\Ref{eq:Q} of $\mathbf{\Upsilon}$, there exists a
  sequence $v_\nu$ of functions in $\mathcal{D}_\delta^*$ converging
  to $u$ in $\L1$ and such that $\mathbf{\Upsilon} (v_\nu) \to
  \mathbf{\Upsilon}(u)$ as $\nu \to +\infty$. By the lower
  semicontinuity of $\hat\mathbf{\Upsilon}$
  (Proposition~\ref{prop:lowsec}), we obtain
  \begin{displaymath}
    \hat\mathbf{\Upsilon} (u) 
    \leq 
    \liminf_{\nu\to+\infty} \hat\mathbf{\Upsilon} (v_\nu) 
    =
    \liminf_{\nu\to+\infty} \mathbf{\Upsilon}(v_\nu)
    =
    \lim_{\nu\to+\infty} \mathbf{\Upsilon}(v_\nu)
    =
    \mathbf{\Upsilon}(u) \,.
  \end{displaymath}
  Analogously, by Lemma~\ref{lemma:approx} we can take a sequence
  $v_\nu$ of functions in $\mathcal{D}_\delta^*$ such that $v_\nu \to
  u$ in $\L1$ and $\hat\mathbf{\Upsilon}(v_\nu) \to
  \hat\mathbf{\Upsilon}(u)$ as $\nu \to +\infty$.  Therefore, along
  this particular sequence, we may repeat the estimates as above
  applying the lower semicontinuity of $\mathbf{\Upsilon}$
  (Proposition~\ref{prop:Qlsc}):
  \begin{displaymath}
    \mathbf{\Upsilon} (u) 
    \leq 
    \liminf_{\nu\to+\infty} \mathbf{\Upsilon} (v_\nu) 
    =
    \liminf_{\nu\to+\infty} \hat\mathbf{\Upsilon} (v_\nu)
    =
    \lim_{\nu\to+\infty} \hat\mathbf{\Upsilon} (v_\nu)
    =
    \hat\mathbf{\Upsilon}(u).
  \end{displaymath}
\end{proof}

\noindent By Corollary~\ref{prop:equal}, in the following we write
$\mathbf{Q}$ and $\mathbf{\Upsilon}$ for $\hat\mathbf{Q}$ and
$\hat\mathbf{\Upsilon}$.

The following proposition shows the lower semicontinuity of $\hat
\mathbf{\Xi}$ along piecewise constant converging sequences.

\begin{lemma}
  \label{lemma:approx2}
  Let $u,\;\tilde u\in\mathcal{D}_\delta$. Then, the approximating
  sequences $v_\nu,\;\tilde v_\nu\in\mathcal{D}_\delta^*$ defined in
  Lemma~\ref{lemma:approx} satisfy also $\lim_{\nu\to +\infty}
  \hat\mathbf{\Xi} (v_\nu,\tilde v_\nu) = \hat\mathbf{\Xi} (u,\tilde
  u)$.
\end{lemma}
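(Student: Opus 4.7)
The plan is to pass to the limit in the integral defining $\hat{\mathbf{\Xi}}$ via a dominated convergence argument, once pointwise a.e.\ convergence of the integrand $\modulo{q_i^\nu(x)}\,\hat{\mathbf{W}}_i^\nu(x)$ is established and a suitable uniform control in $\nu$ is obtained.

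First I would establish the convergence of the sizes $q_i^\nu$: writing $\tilde v_\nu(x) = \mathbf{S}(\mathbf{q}^\nu(x))(v_\nu(x))$ as in the definition of $\mathbf{\Phi}$, the smoothness of the map $\mathbf{S}$ in~\Ref{eq:S} together with the implicit function theorem yield a Lipschitz estimate of the form $\modulo{q_i^\nu(x) - q_i(x)} \leq C\left(\modulo{v_\nu(x) - u(x)} + \modulo{\tilde v_\nu(x) - \tilde u(x)}\right)$. Combined with the uniform and $\L1$ convergence $v_\nu \to u$, $\tilde v_\nu \to \tilde u$ supplied by Lemma~\ref{lemma:approx}, this gives $q_i^\nu \to q_i$ both uniformly and in $\L1$ for every $i$.

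Next I would check the pointwise a.e.\ convergence of the weights $\hat{\mathbf{W}}_i^\nu(x) \to \hat{\mathbf{W}}_i(x)$. The constant contribution $\hat{\mathbf{Q}}(v_\nu) + \hat{\mathbf{Q}}(\tilde v_\nu)$ converges by~\Ref{eq:qconvergence}, while each measure-theoretic summand appearing in $\hat{\mathbf{A}}_i^\nu(x)$, namely $\modulo{\mu_j^\nu}(\left]-\infty,x\right])$ or $\modulo{\mu_j^\nu}(\left]x,+\infty\right[)$, converges to its counterpart by~\Ref{eq:mu-convergence} at every $x$ which is not an atom of $\mu_j$ or $\tilde\mu_j$, hence at a.e.\ $x$. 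In the genuinely nonlinear case, the indicator $\caratt{\left[0,+\infty\right[}(q_i)$ in $\hat{\mathbf{A}}_i$ is discontinuous at $q_i=0$; however at any $x$ with $q_i(x)=0$ one has $\modulo{q_i^\nu(x)} \to 0$ while $\hat{\mathbf{W}}_i^\nu(x)$ remains uniformly bounded, so the full integrand $\modulo{q_i^\nu(x)} \hat{\mathbf{W}}_i^\nu(x)$ still converges pointwise a.e.\ to $\modulo{q_i(x)} \hat{\mathbf{W}}_i(x)$.

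The final step is to invoke the generalized dominated convergence theorem (Pratt's lemma): since $\hat{\mathbf{W}}_i^\nu(x) \leq K$ for a constant $K$ depending only on $\delta$, one has $\modulo{q_i^\nu(x)} \hat{\mathbf{W}}_i^\nu(x) \leq K \modulo{q_i^\nu(x)}$, and by the first step the sequence of dominating functions $K \modulo{q_i^\nu}$ converges to $K \modulo{q_i}$ both a.e.\ and in $\L1$. The theorem then gives convergence of each integral $\int \modulo{q_i^\nu} \hat{\mathbf{W}}_i^\nu\, dx \to \int \modulo{q_i} \hat{\mathbf{W}}_i\, dx$, and summing over $i$ yields the conclusion. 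The only delicate point is the discontinuity of the genuinely nonlinear weight in the sign of $q_i$, but as just observed this is harmless since the factor $\modulo{q_i}$ vanishes exactly where the discontinuity occurs.
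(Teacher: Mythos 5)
Your proposal is correct and follows essentially the same route as the paper: convergence of $\mathbf{q}^\nu$ to $\mathbf{q}$ uniformly and in $\L1$, pointwise convergence of the weights via Lemma~\ref{lemma:approx} with the sign discontinuity at $q_i(x)=0$ neutralized by the vanishing factor $\modulo{q_i}$, and a dominated convergence argument to conclude. The only cosmetic difference is that you invoke Pratt's lemma on the full integrand, whereas the paper splits the difference into $\int\modulo{q_i^\nu-q_i}\,\mathbf{W}_i^\nu$ plus $\int\modulo{q_i}\,\modulo{\mathbf{W}_i^\nu-\mathbf{W}_i}$ and applies the ordinary dominated convergence theorem to the second term.
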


\begin{proof}
  Define $\mathbf{q}(x)$ and $\mathbf{q}^\nu(x)$ so that $\tilde u(x)
  = \mathbf{S} \left(\mathbf{q}(x)\right) \left(u(x)\right)$ and
  $\tilde v_\nu(x) = \mathbf{S}\left(\mathbf{q}^\nu(x)\right)
  \left(v_\nu(x)\right)$. Then, $\mathbf{q}^\nu \to \mathbf{q}$
  uniformly and in $\L1$. Let $\mathbf{W}_i(x)$, respectively
  $\mathbf{W}_i^\nu(x)$, be the weights defined in~\Ref{def:barw} with
  reference to $u, \tilde u$, respectively $v_\nu, \tilde v_\nu$.
  Compute
  \begin{eqnarray*}
    \modulo{
      \hat\mathbf{\Xi} \left(v_\nu,\tilde v_\nu \right)-
      \hat\mathbf{\Xi}\left(u,\tilde u\right)
    }
    & \leq &
    \sum_{i=1}^n
    \int_{-\infty}^{+\infty}
    \modulo{q_i^\nu(x)-q_i(x)}
    \cdot
    \mathbf{W}_i^\nu(x) \, dx
    \\
    & &
    + 
    \sum_{i=1}^n
    \int_{-\infty}^{+\infty}
    \modulo{q_i(x)}
    \cdot
    \modulo{\mathbf{W}_i^\nu(x) - \mathbf{W}_i(x)} \, dx
  \end{eqnarray*}
  The first integral converges obviously to zero. Concerning the
  second one, where $q_i(x) = 0$ the integrand vanishes. Otherwise, if
  $q_i(x) \neq 0$, then for $\nu$ sufficiently large $q_i(x) \cdot
  q_i^\nu(x) > 0$ and hence the weights depend continuously only on
  the wave measures and on the interaction potentials which all
  converge by Lemma~\ref{lemma:approx}. Therefore, for all $x \in
  \reali$ the integrand satisfies
  \begin{displaymath}
    \lim_{\nu\to +\infty} 
    \modulo{q_i(x)} \cdot \modulo{\mathbf{W}_i^\nu(x) - \mathbf{W}_i(x)} 
    =0 \,.
  \end{displaymath}
  The Dominated Convergence Theorem concludes the proof.
\end{proof}

\begin{proposition}
  \label{prop:lscfunctional}
  For any $u,\;\tilde u \in \mathcal{D}_\delta$ and $v_\nu,\;\tilde
  v_\nu \in \mathcal{D}_\delta^*$ such that $v_\nu \to u$ and $\tilde
  v_\nu \to \tilde u$ in $\L1$ as $\nu\to +\infty$, we have
  $\hat\mathbf{\Xi} (u,\tilde u) \leq \liminf_{\nu\to +\infty}
  \hat\mathbf{\Xi} (v_\nu,\tilde v_\nu)$.
\end{proposition}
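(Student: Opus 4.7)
The strategy is to combine the sub-sampling estimate of Lemma~\ref{lemma:reduce} with the convergence result of Lemma~\ref{lemma:approx2} via a two-level diagonal argument. After extracting a subsequence, I may assume that $\hat\mathbf{\Xi}(v_\nu,\tilde v_\nu)$ converges to $l := \liminf_{\nu\to+\infty}\hat\mathbf{\Xi}(v_\nu,\tilde v_\nu)$ and that $v_\nu\to u$, $\tilde v_\nu\to \tilde u$ pointwise a.e. For every $\mu\in\naturali\setminus\{0\}$, apply Lemma~\ref{lemma:approx} to $u$ and $\tilde u$ separately, obtaining piecewise constant $w_\mu,\tilde w_\mu\in\mathcal{D}_\delta^*$ built from values of $u$, $\tilde u$ on a suitable partition; by Lemma~\ref{lemma:approx2}, $\hat\mathbf{\Xi}(w_\mu,\tilde w_\mu)\to \hat\mathbf{\Xi}(u,\tilde u)$.

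Next, for each fixed $\mu$, construct $w_{\mu,\nu},\tilde w_{\mu,\nu}\in\mathcal{D}_\delta^*$ as sub-samplings of $v_\nu,\tilde v_\nu$ on the same partition used to build $w_\mu,\tilde w_\mu$. Specifically, within each interval of that partition pick a point that (i) lies in the a.e.\ set of pointwise convergence, (ii) is not a jump of $u$ or $\tilde u$, and (iii) yields the same limit value assumed by $w_\mu$ (resp.\ $\tilde w_\mu$) on that interval; then replace the value $u(y_\alpha^\mu)$ by $v_\nu(y_\alpha^\mu)$ (and analogously on the $\tilde u$ side) to obtain $w_{\mu,\nu}$ and $\tilde w_{\mu,\nu}$. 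This makes $w_{\mu,\nu}$ a sub-sampling of $v_\nu$ in the sense of Lemma~\ref{lemma:reduce}, so
\begin{displaymath}
  \mathbf{\Phi}(w_{\mu,\nu},\tilde w_{\mu,\nu})
  \leq
  \mathbf{\Phi}(v_\nu,\tilde v_\nu)
  + C\left(\norma{w_{\mu,\nu}-v_\nu}_{\L1}+\norma{\tilde w_{\mu,\nu}-\tilde v_\nu}_{\L1}\right).
\end{displaymath}
By the pointwise convergence of $v_\nu$, $\tilde v_\nu$ at the chosen sample points and the fact that $v_\nu\to u$, $\tilde v_\nu \to \tilde u$ in $\L1$, for fixed $\mu$ we have $w_{\mu,\nu}\to w_\mu$ and $\tilde w_{\mu,\nu}\to\tilde w_\mu$ in $\L1$ as $\nu\to+\infty$, while $\norma{w_{\mu,\nu}-v_\nu}_{\L1}\to \norma{w_\mu-u}_{\L1}$ and analogously for the tilded terms.

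Letting $\nu\to+\infty$ at $\mu$ fixed, Remark~\ref{rem:ContPhi} gives $\mathbf{\Phi}(w_{\mu,\nu},\tilde w_{\mu,\nu})\to\mathbf{\Phi}(w_\mu,\tilde w_\mu)=\hat\mathbf{\Xi}(w_\mu,\tilde w_\mu)$; combining this with the previous display and the chosen subsequence yields
\begin{displaymath}
  \hat\mathbf{\Xi}(w_\mu,\tilde w_\mu)
  \leq
  l + C\left(\norma{w_\mu-u}_{\L1}+\norma{\tilde w_\mu-\tilde u}_{\L1}\right).
\end{displaymath}
Passing now $\mu\to+\infty$, the left-hand side tends to $\hat\mathbf{\Xi}(u,\tilde u)$ by Lemma~\ref{lemma:approx2} and the error term vanishes by the $\L1$ convergence in Lemma~\ref{lemma:approx}, giving $\hat\mathbf{\Xi}(u,\tilde u)\leq l$ as required. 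The main obstacle is step (ii)--(iii) above: the construction in Lemma~\ref{lemma:approx} involves taking values $u(x_\alpha\pm)$ at jump points, so one must take care that the sample points $y_\alpha^\mu$ lie strictly inside intervals of constancy of $w_\mu$ and simultaneously in the full-measure set of pointwise convergence of both $v_\nu$ and $\tilde v_\nu$, which is possible because each such interval has positive Lebesgue measure.
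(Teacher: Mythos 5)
Your argument is correct and follows essentially the same route as the paper's own proof: approximate $u,\tilde u$ by the piecewise constant functions of Lemma~\ref{lemma:approx}, sub-sample $v_\nu,\tilde v_\nu$ on the same partition at points of the pointwise-convergence set, compare with $\hat{\mathbf{\Xi}}(v_\nu,\tilde v_\nu)$ via Lemma~\ref{lemma:reduce}, pass to the limit in $\nu$ by Remark~\ref{rem:ContPhi}, and then let the approximation parameter go to its limit. The only imprecision is in your point (iii): one cannot in general make $u(y_\alpha^\mu)$ exactly equal to the value $u(x_\alpha^\mu\pm)$ taken by $w_\mu$ on that interval, only arbitrarily close to it; the paper absorbs this discrepancy into an additional $\epsilon$-error through the continuity of Remark~\ref{rem:ContPhi}, which is exactly how your step should be read.
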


\begin{proof}
  Let $l = \liminf_{\nu\to +\infty}\hat \mathbf{\Xi} (v_\nu,\tilde
  v_\nu)$. Passing to subsequences, we assume that $l = \lim_{\nu\to
    +\infty}\hat\mathbf{\Xi}\left(v_\nu,\tilde v_\nu\right)$ and that
  $v_\nu$, respectively $\tilde v_\nu$, converges pointwise to $u$,
  respectively $\tilde u$, on a set $D \subseteq \reali$ with $\reali
  \setminus D$ having zero Lebesgue measure.  Fix $\epsilon > 0$ and
  apply lemmas~\ref{lemma:approx} and~\ref{lemma:approx2} to find two
  functions
  \begin{displaymath}
    v_\epsilon
    =
    \sum_{\alpha=1}^{N_\epsilon}
    \left(
      u(x_\alpha^\epsilon-) \, 
      \caratt{\left[y_{\alpha-1}^\epsilon, 
          x_\alpha^\epsilon\right[}
      +
      u(x_\alpha^\epsilon+) \, \caratt{\left[x_\alpha^\epsilon, 
          y_\alpha^\epsilon\right[} 
    \right) \,,
  \end{displaymath}
  \begin{displaymath}
    \tilde v_\epsilon
    =
    \sum_{\alpha=1}^{\tilde N_\epsilon}
    \left(
      \tilde u(\tilde x_\alpha^\epsilon-) \, 
      \caratt{\left[\tilde y_{\alpha-1}^\epsilon, 
          \tilde x_\alpha^\epsilon\right[}
      +
      \tilde u(\tilde x_\alpha^\epsilon+) \, \caratt{\left[\tilde x_\alpha^\epsilon, 
          \tilde y_\alpha^\epsilon\right[} 
    \right) \,,
  \end{displaymath}
  such that $\modulo{\hat\mathbf{\Xi} (u,\tilde u) - \hat\mathbf{\Xi}
    (v_\epsilon,\tilde v_\epsilon)} + \norma{u-v_\epsilon}_{\L1} +
  \norma{\tilde u-\tilde v_\epsilon}_{\L1} \leq \epsilon$.  On
  $\mathcal{D}_\delta^*$, $\hat\mathbf{\Xi}$ and $\gf$ coincide, hence
  Remark~\ref{rem:ContPhi} applies and there are points
  \begin{eqnarray*}
    \check x_{\alpha}^{\epsilon-}
    \in 
    \left] y_{\alpha-1}^\epsilon, x_\alpha^\epsilon \right[ \cap D
    \quad
    &\mbox{ and }&
    \quad
    \check x_{\alpha}^{\epsilon+}
    \in 
    \left] x_{\alpha}^{\epsilon}, y_\alpha^\epsilon\right[ \cap D
    \\ 
    \check{\tilde x}_{\alpha}^{\epsilon-} \in 
    \left] \tilde y_{\alpha-1}^\epsilon, \tilde x_\alpha^\epsilon\right[ \cap D
    \quad
    &\mbox{ and }&
    \quad
    \check{\tilde x}_{\alpha}^{\epsilon+} \in 
    \left] \tilde x_{\alpha}^{\epsilon}, \tilde y_\alpha^\epsilon\right[ \cap D 
  \end{eqnarray*}
  such that the two functions
  \begin{eqnarray*}
    \check v_{\epsilon,\nu}
    & = &
    \sum_{\alpha=1}^{N_\epsilon}
    \left(
      v_\nu(\check x_\alpha^{\epsilon-}) \, 
      \caratt{\left[y_{\alpha-1}^\epsilon, 
          x_\alpha^\epsilon\right[}
      +
      v_\nu(\check x_\alpha^{\epsilon+}) \, \caratt{\left[x_\alpha^\epsilon, 
          y_\alpha^\epsilon\right[} 
    \right)
    \\
    \check{ \tilde v}_{\epsilon,\nu}
    & = &
    \sum_{\alpha=1}^{\tilde N_\epsilon}
    \left(
      \tilde v_\nu(\check{\tilde x}_\alpha^{\epsilon-}) \, 
      \caratt{\left[\tilde y_{\alpha-1}^\epsilon, 
          \tilde x_\alpha^\epsilon\right[}
      +
      \tilde v_\nu(\check{\tilde x}_\alpha^{\epsilon+}) \, \caratt{\left[\tilde x_\alpha^\epsilon, 
          \tilde y_\alpha^\epsilon\right[} 
    \right)
  \end{eqnarray*}
  satisfy $\modulo{\hat\mathbf{\Xi} (v_\epsilon,\tilde v_\epsilon) -
    \hat\mathbf{\Xi} (\check v_{\epsilon,\nu},\check{\tilde
      v}_{\epsilon,\nu})} + \norma{\check v_{\epsilon,\nu} -
    v_\epsilon}_{\L1} +\norma{\check {\tilde v}_{\epsilon,\nu} -
    \tilde v_\epsilon}_{\L1} \leq \epsilon$ for $\nu$ sufficiently
  large. Lemma~\ref{lemma:reduce} thus implies
  \begin{displaymath}
    \hat \mathbf{\Xi} (\check v_{\epsilon,\nu},\check{\tilde v}_{\epsilon,\nu})
    \leq 
    \hat \mathbf{\Xi} ( v_{\nu},\tilde v_{\nu})
    +
    C \left( 
      \norma{\check v_{\epsilon,\nu} - v_{\nu}}_{\L1} +
      \norma{\check{\tilde v}_{\epsilon,\nu} - \tilde v_{\nu}}_{\L1}
    \right) \,.
  \end{displaymath}
  Hence
  \begin{eqnarray*}
    \hat\mathbf{\Xi} (u,\tilde u)
    & \leq &
    \hat \mathbf{\Xi} (v_\epsilon, \tilde v_\epsilon) + \epsilon
    \\
    & \leq &
    \hat\mathbf{\Xi} ( \check v_{\epsilon,\nu},\check{\tilde v}_{\epsilon,\nu}) +
    2 \epsilon
    \\
    & \leq &
    \hat\mathbf{\Xi} ( v_{\nu},\tilde v_{\nu}) +
    2 \epsilon +
    C \left(
      \norma{\check v_{\epsilon,\nu}- v_{\nu}}_{\L1} +
      \norma{\check{\tilde v}_{\epsilon,\nu}- \tilde v_{\nu}}_{\L1}
    \right)
    \\
    & \leq &
    \hat\mathbf{\Xi} ( v_{\nu},\tilde v_{\nu}) +
    2\epsilon \cdot (C+1) +
    C \left(
      \norma{v_{\epsilon}- v_{\nu}}_{\L1} +
      \norma{{\tilde v}_{\epsilon}- \tilde v_{\nu}}_{\L1}
    \right)
  \end{eqnarray*}
  so that, for $\nu\to +\infty$,
  \begin{displaymath}
    \hat\mathbf{\Xi} (u,\tilde u)
    \leq
    l+
    2\epsilon (C+1) +
    C \left(
      \norma{v_{\epsilon}- u}_{\L1} +
      \norma{\tilde v_{\epsilon}- \tilde u}_{\L1}
    \right)
    \leq
    l + 2\epsilon (2C+1) \,.
  \end{displaymath}
  The arbitrariness of $\epsilon$ concludes the proof.
\end{proof}

\begin{theorem}
  Let $f$ satisfy~\textbf{(F)}. The functional $\hat\mathbf{\Xi}$
  defined in~\Ref{eq:barPhi} coincides on all $\mathcal{D}_\delta$
  with $\mathbf{\Xi}$ as defined in~\Ref{eq:Xi}. In particular
  $\hat\mathbf{\Xi}$ is lower semicontinuous.
\end{theorem}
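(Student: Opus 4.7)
The plan is to establish the identity $\hat{\mathbf{\Xi}}=\mathbf{\Xi}$ on $\mathcal{D}_\delta\times\mathcal{D}_\delta$ by proving the two inequalities separately, exploiting the fact that on $\mathcal{D}_\delta^*$ both functionals coincide with $\gf$ (as already observed right after~\Ref{def:barw}). Once equality is obtained, the lower semicontinuity of $\hat{\mathbf{\Xi}}$ is an immediate consequence of Proposition~\ref{prop:lsc}.

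For the inequality $\hat{\mathbf{\Xi}}(u,\tilde u)\le\mathbf{\Xi}(u,\tilde u)$, I would unwind the definition~\Ref{eq:Xi}: fix $u,\tilde u\in\mathcal{D}_\delta$ and pick sequences $v_\nu\in B_{1/\nu}(u)$, $\tilde v_\nu\in B_{1/\nu}(\tilde u)$ of piecewise constant functions in $\mathcal{D}_\delta^*$ with $\gf(v_\nu,\tilde v_\nu)\to\mathbf{\Xi}(u,\tilde u)$. Since $v_\nu\to u$ and $\tilde v_\nu\to\tilde u$ in $\L1$, Proposition~\ref{prop:lscfunctional} gives
\begin{displaymath}
  \hat{\mathbf{\Xi}}(u,\tilde u)
  \le\liminf_{\nu\to+\infty}\hat{\mathbf{\Xi}}(v_\nu,\tilde v_\nu)
  =\liminf_{\nu\to+\infty}\gf(v_\nu,\tilde v_\nu)
  =\mathbf{\Xi}(u,\tilde u),
\end{displaymath}
where the middle equality uses that $\hat{\mathbf{\Xi}}$ and $\gf$ agree on $\mathcal{D}_\delta^*\times\mathcal{D}_\delta^*$.

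For the reverse inequality $\mathbf{\Xi}(u,\tilde u)\le\hat{\mathbf{\Xi}}(u,\tilde u)$, I would instead use the \emph{explicit} approximations constructed in Lemma~\ref{lemma:approx}: choose $v_\nu,\tilde v_\nu\in\mathcal{D}_\delta^*$ associated to $u,\tilde u$ as in~\Ref{eq:vnu}. By Lemma~\ref{lemma:approx2}, $\hat{\mathbf{\Xi}}(v_\nu,\tilde v_\nu)\to\hat{\mathbf{\Xi}}(u,\tilde u)$. Since on $\mathcal{D}_\delta^*$ we have $\mathbf{\Xi}(v_\nu,\tilde v_\nu)=\gf(v_\nu,\tilde v_\nu)=\hat{\mathbf{\Xi}}(v_\nu,\tilde v_\nu)$ (the first equality is Proposition~\ref{prop:Equal}), the lower semicontinuity of $\mathbf{\Xi}$ furnished by Proposition~\ref{prop:lsc} yields
\begin{displaymath}
  \mathbf{\Xi}(u,\tilde u)
  \le\liminf_{\nu\to+\infty}\mathbf{\Xi}(v_\nu,\tilde v_\nu)
  =\lim_{\nu\to+\infty}\hat{\mathbf{\Xi}}(v_\nu,\tilde v_\nu)
  =\hat{\mathbf{\Xi}}(u,\tilde u).
\end{displaymath}

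Combining the two estimates gives $\hat{\mathbf{\Xi}}=\mathbf{\Xi}$ on all of $\mathcal{D}_\delta\times\mathcal{D}_\delta$, and then lower semicontinuity of $\hat{\mathbf{\Xi}}$ is inherited directly from Proposition~\ref{prop:lsc}. No genuine obstacle remains at this stage: the heavy lifting has been carried out in Lemma~\ref{lemma:approx}, Lemma~\ref{lemma:approx2} and Proposition~\ref{prop:lscfunctional}, and the only subtlety is to remember, in each of the two inequalities, to invoke the lower semicontinuity of the \emph{other} functional than the one being approximated, so that the two complementary semicontinuity results (Propositions~\ref{prop:lsc} and~\ref{prop:lscfunctional}) do the work in tandem.
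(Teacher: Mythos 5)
Your proposal is correct and follows essentially the same route as the paper: the paper's proof simply cites that both functionals are approximated by, coincide on, and are lower semicontinuous along piecewise constant sequences, and then invokes ``a procedure identical to that of Corollary~\ref{prop:equal}'' --- which is precisely the two-sided argument you spell out, each inequality obtained by approximating one functional and applying the semicontinuity of the other.
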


\begin{proof}
  Both functionals can be approximated through their evaluation on
  piecewise constant functions (see~\Ref{eq:Xi} and
  Lemma~\ref{lemma:approx2}). Both functionals coincide on piecewise
  constant functions and are lower semicontinuous along sequences
  of piecewise constant functions (see Theorem~\ref{thm:main} and
  Proposition~\ref{prop:lscfunctional}). A procedure
  identical to that of Corollary~\ref{prop:equal} completes the proof.
\end{proof}

\noindent \textbf{Acknowledgment:} We thank an anonymous referee for
suggesting to consider also the wave measure formulation.

{\small{

  }}

\end{document}